\documentclass[11pt]{amsart}

\usepackage{amsmath}
\usepackage{amsfonts}
\usepackage{amssymb}
\usepackage{amscd}
\usepackage{amsthm}
\usepackage{filecontents}
\usepackage{framed}
\usepackage{fullpage}
\usepackage{graphicx}
\usepackage{latexsym}
\usepackage[numbers]{natbib}  

\usepackage{hyperref}
\hypersetup{colorlinks,linkcolor={red},citecolor={blue},urlcolor={blue}}

\evensidemargin\oddsidemargin

\newtheorem{theorem}{Theorem}[section]
\newtheorem{lemma}[theorem]{Lemma}
\newtheorem{corollary}[theorem]{Corollary}

\theoremstyle{definition}
\newtheorem{definition}[theorem]{Definition}

\newtheorem{remark}[theorem]{Remark}

\numberwithin{equation}{section}

\newcommand{\CC}{\mathbb C}

\newcommand{\cD}{\mathcal D}

\newcommand{\Orth}{\mathop{\null\mathrm {O}}\nolimits}

\newcommand{\latt}[1]{{\langle{#1}\rangle}}

\newcommand{\ord}{\operatorname{ord}}

\usepackage{longtable}

\usepackage{tikz}
\usetikzlibrary{positioning}
\usetikzlibrary{arrows}

\newenvironment{psmallmatrix}
  {\left(\begin{smallmatrix}}
  {\end{smallmatrix}\right)}

\begin{document}

\title[Simple lattices and free algebras of modular forms]{Simple lattices and free algebras of modular forms}

\author{Haowu Wang}

\address{Max-Planck-Institut f\"{u}r Mathematik, Vivatsgasse 7, 53111 Bonn, Germany}

\email{haowu.wangmath@gmail.com}

\author{Brandon Williams}

\address{Fachbereich Mathematik, Technische Universit\"{a}t Darmstadt, 64289 Darmstadt, Germany}

\email{bwilliams@mathematik.tu-darmstadt.de}

\subjclass[2010]{11F55, 51F15, 32N15}

\date{\today}

\keywords{Symmetric domains of type IV, modular forms on orthogonal groups, simple lattices, reflection groups, Borcherds products}

\begin{abstract}
We study the algebras of modular forms on type IV symmetric domains for simple lattices; that is, lattices for which every Heegner divisor occurs as the divisor of a Borcherds product. For every simple lattice $L$ of signature $(n,2)$ with $3\leq n \leq 10$, we prove that the graded algebra of modular forms for the maximal reflection subgroup of the orthogonal group of $L$ is freely generated. We also show that, with five exceptions, the graded algebra of modular forms for the maximal reflection subgroup of the discriminant kernel of $L$ is also freely generated.
\end{abstract}

\maketitle

\begin{small}
\tableofcontents
\end{small}

\addtocontents{toc}{\setcounter{tocdepth}{1}} 

\section{Introduction and statement of results}

An even integral lattice $L$ of signature $(n, 2)$ is called \emph{simple} if the dual Weil representation attached to $L$ admits no cusp forms of weight $1 + n/2$. In particular $L$ is simple if and only if every Heegner divisor on the modular variety attached to $L$ occurs as the divisor of a Borcherds product. There are finitely many simple lattices and they were determined by Bruinier--Ehlen--Freitag \cite{BEF}. In this paper we will prove:

\begin{theorem}\label{Maintheorem}
Let $L$ be a simple lattice of signature $(n, 2)$, with $3 \le n \le 10$. 
\begin{itemize}
\item[(i)] Let $\Orth_r(L)$ denote the subgroup generated by all reflections in the orthogonal group of $L$. Then the graded ring of modular forms $M_*(\Orth_r(L))$ is freely generated.
\item[(ii)]  Let $\widetilde \Orth_r(L)$ denote the subgroup generated by all reflections in the discriminant kernel of $L$. With five exceptions, the graded ring of modular forms $M_*(\widetilde \Orth_r(L))$ is freely generated.
\end{itemize}
\end{theorem}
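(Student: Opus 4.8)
The plan is to run through the finite list of simple lattices furnished by the Bruinier--Ehlen--Freitag classification \cite{BEF} and, for each, establish freeness via the Jacobian criterion for algebras of orthogonal modular forms. Although $\Orth_r(L)$ and $\widetilde\Orth_r(L)$ are reflection groups, so that one expects a Chevalley-type phenomenon, modular forms are sections of twisted line bundles rather than honest invariants, and the criterion is precisely what measures the obstruction. In the form I will use it, it reads: if $W\in\{\Orth_r(L),\widetilde\Orth_r(L)\}$ has reduced mirror divisor $\mathcal H_W=\sum_r r^\perp$ (summed over the reflection hyperplanes), and if $f_0,\dots,f_n$ are modular forms for $W$ of weights $k_0,\dots,k_n$, then the Jacobian $J=J(f_0,\dots,f_n)$ is a modular form for $W$ of weight $n+\sum_{i}k_i$ with a character, and $M_*(W)=\CC[f_0,\dots,f_n]$ holds if and only if $f_0,\dots,f_n$ are algebraically independent and $J$ vanishes only along $\mathcal H_W$, each component to first order. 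Since any Jacobian of $W$-invariants is anti-invariant, hence divisible by the reflective form $\Phi_W$ cutting out $\mathcal H_W$ to order one, the content reduces, once algebraic independence is known, to the single numerical identity $\sum_{i=0}^{n}k_i=\operatorname{wt}(\Phi_W)-n$.

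The first substantive step is to describe, for each simple lattice $L$ of signature $(n,2)$ with $3\le n\le 10$, the mirror configurations of $\Orth_r(L)$ and $\widetilde\Orth_r(L)$ — equivalently, the orbits of reflective vectors — via Vinberg's algorithm together with the known classification of reflective lattices of small rank. Simplicity enters decisively here: since $L$ has no cusp forms of weight $1+n/2$ for the dual Weil representation, every Heegner divisor, in particular each mirror divisor $\mathcal H_W$, is the divisor of a Borcherds product. Hence $\Phi_W$ exists unconditionally, is (up to the Eisenstein contribution) the exponential lift of an explicitly computable weakly holomorphic input, and its weight $\operatorname{wt}(\Phi_W)$ is therefore known; this fixes the target value $\operatorname{wt}(\Phi_W)-n$ for the sum of weights of any free generating set.

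Next I would exhibit, for each $W$, a system of $n+1$ modular forms of the predicted weights, assembled from Eisenstein series, from reflective Borcherds products of small weight (again available by simplicity), and from quasi-pullbacks of generators already known on type IV domains of larger lattices in the list — which are related to the given one by enlarging the positive-definite part by a root lattice, and so support an induction on $n$. For some of the lattices in this range the structure of $M_*$ for the full orthogonal group or the discriminant kernel is already classical (going back, in the smallest case, to Igusa's description of genus-two Siegel modular forms), so that the new content is the uniform treatment and the passage to the reflection subgroups. Given candidates $f_0,\dots,f_n$, one checks algebraic independence — equivalently $J(f_0,\dots,f_n)\ne 0$, which follows from a single Fourier coefficient — and then invokes the identity $\sum_i k_i=\operatorname{wt}(\Phi_W)-n$ to conclude that $M_*(W)=\CC[f_0,\dots,f_n]$. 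I expect the principal obstacle to lie exactly at this step: controlling the reflective Borcherds products and their weights uniformly over the whole list, verifying that the natural candidates are genuinely algebraically independent (and replacing them otherwise), and, in the cases where the weight shortcut is delicate, computing enough Fourier coefficients of $J$ and $\Phi_W$ to certify $J=c\,\Phi_W$ directly.

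For part (ii) the non-exceptional lattices are treated exactly as above with $W=\widetilde\Orth_r(L)$. For the five exceptions I would instead prove $M_*(\widetilde\Orth_r(L))$ is not free through a Poincar\'e series obstruction. By part (i) the ring $M_*(\Orth_r(L))=\CC[g_0,\dots,g_n]$ is polynomial, and $M_*(\widetilde\Orth_r(L))$ is a finite module over it carrying an action of $\Orth_r(L)/\widetilde\Orth_r(L)$ with invariant subring $M_*(\Orth_r(L))$; its Poincar\'e series therefore equals $\prod_i(1-t^{k_i})^{-1}\,P(t)$ for a polynomial $P$ that can be computed from the quotient group together with low-weight dimension data. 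A polynomial ring structure on $M_*(\widetilde\Orth_r(L))$ would force $P$ to factor as $\prod_j(1-t^{a_j})/(1-t^{b_j})$ with positive integer exponents; I would check that this fails in exactly the five exceptional cases — for instance $P$ acquires a negative coefficient, or does not split through the $k_i$ — whereas in every other case $P$ has the admissible shape and the explicit generators are produced as in the previous paragraphs.
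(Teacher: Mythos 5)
Your overall strategy coincides with the paper's: run through the simple lattices of \cite{BEF}, construct $n+1$ candidate generators from additive theta lifts, Eisenstein series and Borcherds products, establish algebraic independence by induction along towers via (quasi-)pullbacks, and conclude via the Jacobian criterion of \cite{Wan} by matching the weight $n+\sum_i k_i$ against the weight of the reflective Borcherds product $\Phi_W$ with simple zeros on all mirrors, so that $J/\Phi_W$ is a nonzero constant by Koecher's principle; this is exactly how the paper proceeds case by case in its later sections. The one genuine divergence is your treatment of the five exceptions in part (ii). The paper disposes of them with the \emph{necessary} half of the Jacobian criterion: for $2U(2)\oplus A_1(2)$ the group $\widetilde\Orth_r$ is trivial, and for the other four the unique modular form whose divisor is the mirror arrangement with multiplicity one fails to be a cusp form, contradicting the requirement that the Jacobian of any free generating set be a reflective \emph{cusp} form. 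Your proposed Poincar\'e-series obstruction is heavier and less certain: it cannot even be formulated when $\widetilde\Orth_r$ is trivial (the spaces $M_k$ are then infinite-dimensional and $M_*(\widetilde\Orth_r)$ is not a finite module over $M_*(\Orth_r)$), and for the remaining four a Hilbert series can in principle accidentally take the shape of a free algebra, so the test may be inconclusive. Since the theorem only asserts freeness outside the exceptions this does not break your proof of the stated result, but the cusp-form condition is the right tool for certifying that the exceptional cases are genuine. Two smaller points: the paper finds the mirror divisors by reading off reflective Borcherds products from Eisenstein-series coefficients rather than by Vinberg's algorithm, and a recurring technical step you should not gloss over is verifying that the candidates have \emph{trivial} character on $W$ --- many natural Borcherds products only acquire trivial character after squaring or symmetrizing, which is why squares of products and power sums of Eisenstein series appear among the actual generators.
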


The main tool in the proof is the necessary and sufficient condition of \cite{Wan} for the graded ring $M_*(\Gamma)$ of modular forms on a type IV symmetric domain for an arithmetic group $\Gamma$ to be free. This asserts that $M_*(\Gamma)$ is freely generated by $n+1$ modular forms if and only if their Jacobian is a cusp form that vanishes precisely on all mirrors of reflections in $\Gamma$ with multiplicity one, where $n$ is the dimension of the symmetric domain. To apply this theorem, we construct the potential Jacobians and the generators using the additive and multiplicative lifts due to Borcherds \cite{Bo}. We can restrict our attention to reflection groups because if $M_*(\Gamma)$ is a free algebra then $\Gamma$ must be generated by reflections (see \cite{VP89}). 

 The five exceptions in part (ii) of Theorem \ref{Maintheorem} are $2U(2)\oplus A_1(2)$, $U\oplus U(2)\oplus A_1(2)$, and $2U\oplus A_1(m)$ for $m=2,3,4$ can be understood in terms of this criterion. For the first of these lattices, $\widetilde{\Orth}_r$ is empty; for the other four, the potential Jacobians fail to be cusp forms. 

Let us explain briefly why one might expect a relationship between free algebras of modular forms and simple lattices. The necessary condition of \cite{Wan} states that the Jacobian $J$ of any generators of a free algebra $M_*(\Gamma)$ is a cusp form for the determinant character which vanishes exactly on all mirrors of reflections in $\Gamma$ with multiplicity one. Moreover, $J^2$ (which has trivial character) factors as the product of modular forms which each vanish precisely on a single $\Gamma$-orbit of these mirrors. In many cases, the converse theorem for Borcherds products \cite{Br} implies that these modular forms must be Borcherds products, and through the ``obstruction principle'' \cite{Bo2}, we see that a \emph{necessary} criterion for $L$ to admit free algebras is that all cusp forms for the dual Weil representation of $L$ of weight $1 + n/2$ have vanishing Fourier coefficients in exponents that correspond to reflections of $L$. By contrast, only in very exceptional cases can the generators themselves be represented as Borcherds products. 

As far as the authors are aware, there are no lattices where this is known to hold except for lattices for which it is trivially true (e.g. simple lattices, or more generally lattices which become simple after dividing out by a group of automorphisms). There are examples of lattices of the latter type that yield free algebras of modular forms. For example, some free algebras associated to the lattices $2U \oplus L$ where $L$ is the $B_n$-root lattice (i.e. $\mathbb{Z}^n$ with the Euclidean bilinear form) appear in \cite{WW20a}, the case $n = 2$ of which is particularly well-known through the interpretation as Hermitian modular forms for the Gaussian integers (cf. \cite{F67}). The methods of this paper can also be applied to this case (see \S \ref{sec:9} for an example). Unfortunately it is not clear how to classify lattices of this type. 

The classification of \cite{BEF} also contains two simple lattices of signature $(n, 2)$ with $n > 10$, namely $\mathrm{II}_{18, 2}$ and $\mathrm{II}_{26, 2}$, but their algebras of modular forms cannot be free by a theorem of Vinberg--Shvartsman \cite{VS17}.  In a future paper we hope to consider the algebras of modular forms for simple lattices of signature $(2, 2)$; we do not carry this out here because the list of such lattices is quite long (according to \cite{BEF} there are $67$ such lattices) and because the failure of Koecher's principle in general must be overcome with different techniques. 

Many of the algebras of modular forms for the 37 lattices covered by Theorem \ref{Maintheorem} appear elsewhere in the literature, and in this paper we simply complete the proof by cases. The literature on algebras of modular forms is quite broad, and (thanks to exceptional isomorphisms in low dimension) many results appear in a rather different form than Theorem \ref{Maintheorem}. We have attributed the known results to the best of our knowledge in Tables \ref{Maintab1} and \ref{Maintab2}, where we also indicate the lattices studied in this paper. Not all of the cases in this paper are new. In particular the structure of $M_*(\widetilde{\Orth}_r(2U(2)\oplus D_4))$ is the main result of \cite{FSM07}, and $M_*(\widetilde \Orth_r(U\oplus U(m)\oplus A_1))$ has been determined in \cite{AI} for $m=2,3,4$. We chose to include these cases because they fit naturally into towers of free algebras and because the Jacobian criterion of \cite{Wan} simplifies the proofs considerably.

\clearpage

\begin{table}[htbp]
\caption{Free algebras of modular forms for the subgroup $\Gamma$ generated by all reflections in $\Orth^+(L)$. $\widetilde \Orth^+(L)$ denotes the discriminant kernel. $\Gamma$ is labeled $\Orth_r(L)$ if it is neither $\Orth^+(L)$ nor $\widetilde \Orth^+(L)$.}\label{Maintab1}
\renewcommand\arraystretch{1.1}
\noindent\[
\begin{array}{cccccc}
\# & L & \Gamma & \text{Weights of generators} & \text{Reference} & \text{Section} \\
\hline
1& 2U \oplus A_1 & \Orth^+ & 4, 6, 10, 12 & \text{\cite{I62}} & \\
2& 2U \oplus A_2 & \Orth^+ & 4, 6, 10, 12, 18 & \text{\cite{DK03}} &\\
3& 2U \oplus A_3 & \Orth^+ & 4, 6, 8, 10, 12, 18& \text{\cite{Klo05}} & \\
4& 2U \oplus A_4 & \widetilde{ \Orth}^+ & 4, 6, 7, 8, 9, 10, 12 & \text{\cite{WW20a}} &\\
5& 2U \oplus A_5 & \widetilde{ \Orth}^+ & 4, 6, 6, 7, 8, 9, 10, 12& \text{\cite{WW20a}} &\\
6& 2U \oplus A_6 & \widetilde{ \Orth}^+ & 4, 5, 6, 6, 7, 8, 9, 10, 12& \text{\cite{WW20a}} &\\
7& 2U \oplus A_7 & \widetilde{ \Orth}^+ & 4, 4, 5, 6, 6, 7, 8, 9, 10, 12& \text{\cite{WW20a}} &\\
8& 2U \oplus D_4 & \Orth^+ & 4, 6, 10, 12, 16, 18, 24 & \text{\cite{K05}} &\\
9& 2U \oplus D_5 & \Orth^+ & 4, 6, 8, 10, 12, 14, 16, 18& \text{\cite{V10}} &\\
10& 2U \oplus D_6 & \Orth^+ & 4, 6, 8, 10, 12, 12, 14, 16, 18& \text{\cite{V18}} &\\
11& 2U \oplus D_7 & \Orth^+ & 4, 6, 8, 10, 10, 12, 12, 14, 16, 18& \text{\cite{V18}} &\\
12& 2U \oplus D_8 & \Orth^+ & 4, 6, 8, 8, 10, 10, 12, 12, 14, 16, 18& \text{\cite{V18}} &\\
13& 2U \oplus E_6  & \widetilde{ \Orth}^+ & 4, 6, 7, 10, 12, 15, 16, 18, 24& \text{\cite{WW20a}} &\\
14& 2U \oplus E_7 & \Orth^+ & 4, 6, 10, 12, 14, 16, 18, 22, 24, 30 & \text{\cite{WW20a}} &\\
15& 2U \oplus E_8 & \Orth^+ & 4, 10, 12, 16, 18, 22, 24, 28, 30, 36, 42 & \text{\cite{HU14}} &\\
\hline
16& 2U \oplus A_1(2)  & \Orth_r & 4, 4, 6, 6 & \text{\cite{F67}} &\\
17& 2U \oplus A_1(3)  & \Orth_r & 2, 4, 4, 6 & \text{\cite{D02}} &\\
18& 2U \oplus A_1(4)  & \Orth_r & 1, 3, 4, 6 & \text{\cite{I62}} &\\
\hline
19& U \oplus U(2) \oplus A_1 & \Orth^+ & 2, 4, 6, 8 & \text{\cite{I91}} &\S\ref{sec:3}\\
20& U \oplus U(2) \oplus A_2 & \Orth^+ & 2, 4, 6, 8, 10 & &\\
21& U \oplus U(2) \oplus A_3 & \Orth^+ & 2, 4, 6, 8, 10, 12& &\\
22& U \oplus U(2) \oplus D_4 & \Orth^+ & 2, 6, 8, 10, 12, 16, 20& &\\
\hline
23& U(2) \oplus S_8 & \Orth^+ & 2, 4, 6, 8 & & \S \ref{sec:4}\\
24& 2U(2) \oplus A_2 & \Orth_r & 4, 4, 6, 6, 6 & &\\
25& 2U(2) \oplus A_3 & \Orth^+ & 4, 6, 6, 8, 10, 12& \text{\cite{FSM07}} &\\
26& 2U(2) \oplus D_4 & \Orth^+ & 4, 6, 10, 12, 16, 18, 24 & \text{\cite{K05}} &\\
\hline
27& U \oplus U(3) \oplus A_1 & \Orth_r & 1, 3, 4, 6& \text{\cite{I91}} & \S \ref{sec:5} \\
28& U \oplus U(3) \oplus A_2 & \Orth_r & 1, 4, 6, 9, 12 & &\\
\hline
29& U \oplus U(4) \oplus A_1 & \Orth_r & 1, 3, 4, 6 &  & \S \ref{sec:6}\\
\hline
30& U \oplus S_8 & \Orth^+ & 2, 4, 6, 10 & & \S \ref{sec:7}\\
\hline
31& 2U(3) \oplus A_1 & \Orth^+ & 2, 4, 4, 6 &  &\S \ref{sec:8}\\
32& U \oplus U(2) \oplus A_1(2) & \Orth^+ & 2, 4, 6, 8 &  & \\
\hline
33& 2U(2) \oplus A_1 & \Orth^+ & 4,4,6,6 &  &\S \ref{sec:8.3} \\
34& 2U(4) \oplus A_1 & \Orth^+ & 4, 6, 10, 12 & &\\
35& 2U(2) \oplus A_1(2) & \Orth^+ & 4,6,10,12 & &\\
36& U(2) \oplus U(4) \oplus A_1 & \Orth^+ & 2, 4, 6, 8 & &\\
37& 2U(3) \oplus A_2 & \Orth^+ & 4, 6, 10, 12, 18 & &\\
\hline
\end{array} 
\]
\end{table}

\begin{table}[htbp]
\caption{Free algebras of modular forms for the subgroup $\Gamma$ generated by all reflections in the discriminant kernel $\widetilde \Orth^+(L)$. $\Gamma$ is labeled $\widetilde \Orth_r(L)$ if it is neither $\Orth^+(L)$ nor $\widetilde \Orth^+(L)$.}
\renewcommand\arraystretch{1.1}\label{Maintab2}
\noindent\[
\begin{array}{cccccc}
\# & L & \Gamma & \text{Weights of generators} & \text{Reference} & \text{Section}\\
\hline
1& 2U \oplus A_1 & \Orth^+ & 4, 6, 10, 12 & \text{\cite{I62}} & \\
2& 2U \oplus A_2 & \widetilde \Orth^+ & 4, 6, 9, 10, 12& \text{\cite{DK03}} & \\
3& 2U \oplus A_3 & \widetilde \Orth^+ & 4, 6, 8, 9, 10, 12& \text{\cite{Klo05}} & \\
4& 2U \oplus A_4 & \widetilde \Orth^+ & 4, 6, 7, 8, 9, 10, 12& \text{\cite{WW20a}} &\\
5& 2U \oplus A_5 & \widetilde \Orth^+ & 4, 6, 6, 7, 8, 9, 10, 12& \text{\cite{WW20a}} & \\
6& 2U \oplus A_6 & \widetilde \Orth^+ & 4, 5, 6, 6, 7, 8, 9, 10, 12& \text{\cite{WW20a}} & \\
7& 2U \oplus A_7 & \widetilde \Orth^+ & 4, 4, 5, 6, 6, 7, 8, 9, 10, 12& \text{\cite{WW20a}} & \\
8& 2U \oplus D_4 & \widetilde \Orth^+ & 4, 6, 8, 8, 10, 12, 18& \text{\cite{WW20a}} & \\
9& 2U \oplus D_5 & \widetilde \Orth^+ & 4, 6, 7, 8, 10, 12, 16, 18& \text{\cite{WW20a}} & \\
10& 2U \oplus D_6 & \widetilde \Orth^+ & 4, 6, 6, 8, 10, 12, 14, 16, 18& \text{\cite{WW20a}} & \\
11& 2U \oplus D_7 & \widetilde \Orth^+ & 4, 5, 6, 8, 10, 12, 12, 14, 16, 18& \text{\cite{WW20a}} & \\
12& 2U \oplus D_8 & \widetilde \Orth^+ & 4, 4, 6, 8, 10, 10, 12, 12, 14, 16, 18& \text{\cite{WW20a}} & \\
13& 2U \oplus E_6 & \widetilde \Orth^+ & 4, 6, 7, 10, 12, 15, 16, 18, 24& \text{\cite{WW20a}} & \\
14& 2U \oplus E_7 & \Orth^+ & 4, 6, 10, 12, 14, 16, 18, 22, 24, 30& \text{\cite{WW20a}} & \\
15& 2U \oplus E_8 & \Orth^+ & 4, 10, 12, 16, 18, 22, 24, 28, 30, 36, 42& \text{\cite{HU14}} & \\
\hline
16& 2U(2) \oplus A_1 & \widetilde \Orth^+ & 2, 2, 2, 2 & \text{\cite{WW20b}} &\\
17& 2U(3) \oplus A_1 & \widetilde \Orth^+ & 1, 1, 1, 1 & \text{\cite{WW20b}} &\\
18& 2U(4) \oplus A_1 & \widetilde \Orth_r  & 1/2, 1/2, 1/2, 1/2 & \text{\cite{R93}} & \\
19& U(2) \oplus U(4) \oplus A_1 & \widetilde \Orth^+ & 1, 1, 1, 1 & \text{\cite{WW20b}} & \\
20& 2U(3) \oplus A_2 & \widetilde \Orth^+ & 1, 1, 1, 1, 1& \text{\cite{FSM06}} & \\
\hline
21& U \oplus U(2) \oplus A_1 & \widetilde \Orth^+ & 2, 4, 4, 6 & \text{\cite{I91}} &\S \ref{sec:3} \\
22& U \oplus U(2) \oplus A_2 & \widetilde \Orth^+ & 2, 4, 4, 5, 6 & &\\
23& U \oplus U(2) \oplus A_3 & \widetilde \Orth^+ & 2, 4, 4, 4, 5, 6 & &\\
24& U \oplus U(2) \oplus D_4 & \widetilde \Orth^+ & 2, 4, 4, 4, 4, 6, 10 & & \\
\hline
25& U(2) \oplus S_8 & \widetilde \Orth^+  & 2, 2, 2, 3 & & \S \ref{sec:4} \\
26& 2U(2) \oplus A_2 & \widetilde \Orth^+ & 2, 2, 2, 2, 3 &\\
27& 2U(2) \oplus A_3 & \widetilde \Orth^+ & 2, 2, 2, 2, 2, 3 & & \\
28& 2U(2) \oplus D_4 & \widetilde \Orth^+ & 2, 2, 2, 2, 2, 2, 6 & \text{\cite{FSM07}} & \\
\hline
29& U \oplus U(3) \oplus A_1 & \widetilde \Orth^+ & 1, 3, 3, 4 & \text{\cite{I91}} &\S \ref{sec:5} \\
30& U \oplus U(3) \oplus A_2 & \widetilde \Orth^+ & 1, 3, 3, 3, 4 &  \\
\hline
31& U \oplus U(4) \oplus A_1 & \widetilde \Orth^+ & 1, 2, 2, 3 & \text{\cite{HI05}}& \S \ref{sec:6} \\
\hline
32& U \oplus S_8 & \widetilde \Orth^+ & 2, 4, 5, 6 & & \S \ref{sec:7} \\
\hline
\end{array} 
\]
\end{table}

\clearpage

\section{Preliminaries}


\subsection{Vector-valued modular forms}

Let $L = (L, Q)$ be an even integral lattice with induced bilinear form $\langle x,y \rangle = Q(x+y) - Q(x) - Q(y)$ and dual lattice $$L' = \{x \in L \otimes \mathbb{Q}: \; \langle x, y \rangle \in \mathbb{Z}\}.$$ There is an induced $\mathbb{Q}/\mathbb{Z}$-valued quadratic form on the discriminant group of $L$: $$Q : L'/L \rightarrow \mathbb{Q}/\mathbb{Z}, \; Q(x + L) = Q(x) + \mathbb{Z}.$$ The pair $A := (L'/L, Q)$ is the \emph{discriminant form} of $L$.

Let $\mathrm{Mp}_2(\mathbb{Z})$ be the metaplectic group, consisting of pairs $M = (M, \phi_M)$ where $M = \begin{psmallmatrix} a & b \\ c & d \end{psmallmatrix} \in \mathrm{SL}_2(\mathbb{Z})$ and $\phi_M$ is a holomorphic square root of $\tau \mapsto c \tau + d$ on $\mathbb{H}$, with the standard generators $T = (\begin{psmallmatrix} 1 & 1 \\ 0 & 1 \end{psmallmatrix}, 1)$ and $S = (\begin{psmallmatrix} 0 & -1 \\ 1 & 0 \end{psmallmatrix}, \sqrt{\tau})$. The \emph{Weil representation} $\rho_L$ is the representation of $\mathrm{Mp}_2(\mathbb{Z})$ on the group ring $\mathbb{C}[A] = \mathrm{span}(e_x: \; x \in A)$ defined by $$\rho_L(T) e_x = \mathbf{e}(-Q(x)) e_x \; \text{and} \; \rho_L(S) e_x = \frac{\mathbf{e}(\mathrm{sig}(L) / 8)}{\sqrt{|A|}} \sum_{y \in A} \mathbf{e}(\langle x,y \rangle) e_y.$$ We remark that $\rho_L$ factors through $\mathrm{SL}_2(\mathbb{Z})$ if and only if $\mathrm{sig}(L)$ is even, and it factors through $\mathrm{PSL}_2(\mathbb{Z})$ if and only if $\mathrm{sig}(L) \equiv 0$ mod $4$.

A \emph{modular form} of weight $k \in \frac{1}{2}\mathbb{Z}$ for the Weil representation $\rho_L$ is a holomorphic function $f : \mathbb{H} \rightarrow \mathbb{C}[A]$ that satisfies $$f|_k M(\tau)  := \phi_M(\tau)^{-2k} f(M \cdot \tau) = \rho_L(M) f(\tau)$$ for all $M \in \mathrm{Mp}_2(\mathbb{Z})$, and which is holomorphic in infinity. $f$ is represented by a Fourier series: $$f(\tau) = \sum_{x \in A} \sum_{n \in \mathbb{Z} - Q(x)} c(n, x) q^n e_x,$$ and holomorphy in infinity is equivalent to $c(n, x) = 0$ for all $n < 0$.

The central element $$Z = S^2 = (ST)^3 = \left( \begin{psmallmatrix} -1 & 0 \\ 0 & -1 \end{psmallmatrix}, i \right)$$ acts through the Weil representation by $\rho_L(Z) e_x = \mathbf{e}(\mathrm{sig}(L) / 4) e_{-x}$ and acts trivially on $\mathbb{H}$. Comparing $f|_k Z$ with $f$ shows that nonzero modular forms exist only in weights $k$ for which $\kappa := k + \mathrm{sig}(L)/2$ is integral, and in this case their Fourier coefficients satisfy $c(n, x) = (-1)^{\kappa} c(n, -x)$.

The simplest vector-valued modular forms are Eisenstein series (cf. chapter 1 of \cite{Br}). If $k \ge 5/2$ and $\beta \in A$ with $Q(\beta) = 0$ then the form $$E_{k, \beta}(\tau) = \sum_{M \in \Gamma_{\infty} \backslash \mathrm{Mp}_2(\mathbb{Z})} e_{\beta} |_k M$$ converges absolutely and locally uniformly and defines a modular form of weight $k$ whose Fourier expansion takes the form $$E_{k,\beta}(\tau) = \frac{1}{2} (e_{\beta} + (-1)^{\kappa} e_{-\beta}) + \sum_{x \in L'/L} \sum_{\substack{n \in \mathbb{Z} - Q(x) \\ n > 0}} c(n, x) q^n e_x.$$ Here $\Gamma_{\infty} = \langle T, Z \rangle = \{M \in \mathrm{Mp}_2(\mathbb{Z}): \; M \cdot \infty = \infty\}$ and $e_{\beta}$ is understood as a constant function. The space of modular forms decomposes in the form $$M_k(\rho_L) = S_k(\rho_L) \oplus \bigoplus_{\substack{\beta \in A / \pm \\ Q(\beta) = 0}} E_{k, \beta}$$ where $S_k(\rho_L)$ is the space of cusp forms. The Fourier coefficients of all $E_{k, \beta}$ are rational numbers. Following \cite{Wil}, \cite{Wil2}, when $\kappa$ is even the series $$Q_{k, m, \beta}(\tau) = \sum_{M \in \Gamma_{\infty} \backslash \mathrm{Mp}_2(\mathbb{Z})} \Big( \sum_{n=1}^{\infty} q^{mn^2} e_{m \beta} \Big) \Big|_k M, \; \beta \in A, \; m \in \mathbb{Z} - Q(\beta)$$ yield a spanning set of vector-valued cusp forms with easily computed rational Fourier coefficients; and similarly when $\kappa$ is odd and $k \ge 7/2$, the forms $$R_{k, m, \beta}(\tau) = \sum_{M \in \Gamma_{\infty} \backslash \mathrm{Mp}_2(\mathbb{Z})} \Big( \sum_{n=1}^{\infty} n q^{mn^2} e_{m \beta} \Big) \Big|_k M, \; \beta \in A, \; m \in \mathbb{Z} - Q(\beta)$$ yield a spanning set of cusp forms. The spaces of modular forms in weights $k \le 2$ can be determined by intersecting $$M_k(\rho_L) = E_4(\tau)^{-1} M_{k+4}(\rho_L) \cap E_6(\tau)^{-1} M_{k+6}(\rho_L),$$ where $E_4, E_6$ are the usual (scalar) Eisenstein series.

The integral orthogonal group $\Orth(L)$ acts on $\mathbb{C}[A]$ by $g \cdot e_x = e_{g \cdot x}$, and this induces an action on modular forms $f$ by $(g \cdot f)(\tau) = g \cdot (f(\tau))$. The action is simple to describe in terms of the modular forms of the previous paragraph: $$g \cdot E_{k, \beta} = E_{k, g \cdot \beta}, \; g \cdot Q_{k, m, \beta} = Q_{k, m, g \cdot \beta}, \; g \cdot R_{k, m, \beta} = R_{k, m, g \cdot \beta}.$$ (Indeed this holds for arbitrary automorphisms of the discriminant form, not all of which are induced by elements of $\Orth(L)$.) One can use this observation to determine the spaces of modular forms with any desired behavior under symmetries of the discriminant form. 

The dimensions of the spaces of modular forms can be computed effectively in weights $k \ge 2$ by the Riemann-Roch formula. We will also mention that the module of vector-valued modular forms for any fixed lattice over the graded ring $M_*(\mathrm{SL}_2(\mathbb{Z})) = \mathbb{C}[E_4, E_6]$ is free and finitely-generated \cite{MM} so the dimensions can be expressed conveniently as a Hilbert series: $$\mathrm{Hilb}_{\rho}(t) := \sum_{k \in \frac{1}{2}\mathbb{Z}} \mathrm{dim}\, M_k(\rho) t^k =  t^{\varepsilon / 2} \frac{P_{\rho}(t)}{(1 - t^4)(1 - t^6)}, \; \varepsilon := \mathrm{sig}(L) \, \text{mod} \, 2$$ for some polynomial $P_{\rho} \in \mathbb{Z}[t]$ which satisfies $P_\rho(1)=|A|$.

\subsection{Orthogonal modular forms}

For background on modular forms and varieties associated to orthogonal groups we refer to \cite{FH00}. In this section suppose the lattice $L$ has signature $(n, 2)$ for some $n \in \mathbb{N}$. The Hermitian symmetric domain $\mathcal{D}(L)$ is either of the two connected components of $$\{[\mathcal{Z}] \in \mathbb{P}(L \otimes \mathbb{C}): \; \langle \mathcal{Z}, \mathcal{Z} \rangle = 0, \; \langle \mathcal{Z}, \overline{\mathcal{Z}} \rangle < 0 \}.$$

Let $\Orth^+(L)$ denote the subgroup of $\Orth(L)$ that preserves $\mathcal{D}(L)$ and let $\Gamma \le \Orth^+(L)$ be a finite-index subgroup. The kernel of $\Orth^+(L)$ on $\CC[L'/L]$ is called the discriminant kernel of $L$, denoted $\widetilde{\Orth}^+(L)$. Following Baily--Borel \cite{BB66}, the modular variety $X_{\Gamma}$ is constructed from the quotient $Y_{\Gamma} = \Gamma \backslash \mathcal{D}(L)$ by including finitely many zero- and one-dimensional cusps, which correspond to isotropic lines and planes of $L$ that lie in the closure of $\mathcal{D}(L)$ up to equivalence.

\begin{definition} Let $k \in \mathbb{N}_0$. A \emph{modular form} of weight $k$ for a character $\chi : \Gamma \rightarrow \mathbb{C}^{\times}$ is a holomorphic function $F : \mathcal{A}(L) \rightarrow \mathbb{C}$ on the affine cone $$\mathcal{A}(L) := \{\mathcal{Z} \in (L \otimes \mathbb{C}) \backslash \{0\}: \; [\mathcal{Z}] \in \mathcal{D}(L)\}$$ that satisfies $$F(t \mathcal{Z}) = t^{-k} F(\mathcal{Z}) \; \text{for all} \; t \in \mathbb{C}^{\times}$$ and $$F(g \mathcal{Z}) = \chi(g) F(\mathcal{Z}) \; \text{for all} \; g \in \Gamma$$ and which extends holomorphically to all cusps of $X_{\Gamma}$.
\end{definition}

Modular forms can be expanded into Fourier series on the tube domain around any cusp. We describe this in the simpler situation that $L$ is split by a hyperbolic plane; i.e. $L = U(N) \oplus K$ for some $N \in \mathbb{N}$. Write elements of $L$ in the form $w = (\lambda, v, \mu)$ with $v \in K$ and $\lambda,\mu \in \mathbb{Z}$, such that the quadratic form on $L$ is $Q(w) = N\lambda \mu + Q(v)$. In this case the tube domain about the cusp $[(1, 0, 0)]$ is $$\mathbb{H}_K := \text{a connected component of} \; \{z = x+iy \in K \otimes \mathbb{C}: \; Q(y) > 0\}.$$ With the correct choice of connected component, there is a biholomorphic embedding $$\phi : \mathbb{H}_K \rightarrow \mathcal{A}(L), \; \phi(z) = (1, z, -Q(z)).$$ By abuse of notation, if $F$ is a modular form then we also denote by $F$ the function $F(z) := F(\phi(z))$ on $\mathbb{H}_K$.

The group $\Gamma$ acts on $\mathbb{H}_K$ by $$g \cdot z = w \; \text{if and only if} \; g \phi(z) = j(g; z) \phi(w) \; \text{for some} \; j(g, z) \in \mathbb{C}^{\times}.$$ Thus the automorphy equations $F(t \mathcal{Z}) = t^{-k} F(\mathcal{Z})$ and $F(g \mathcal{Z}) = \chi(g) F(\mathcal{Z})$ become the usual functional equation $$F(g \cdot z) = \chi(g) j(g; z)^k F(z), \; g \in \Gamma.$$ The group $\Orth^+(L)$ contains the maps $$t_b : (\lambda, v, \mu) \mapsto (\lambda, v + b, \mu + \langle v, b \rangle - Q(b) \lambda), \; \; b \in K$$ which act on $\mathbb{H}_K$ by $t_b \cdot z = z + b$, so the invariance of $F$ under translations implies that it is represented by a Fourier series: $$F(z) = \sum_{\lambda \in K_1} c(\lambda) \mathbf{q}^{\lambda}, \; \mathbf{q}^{\lambda} := e^{2\pi i \langle \lambda, z \rangle},\; \text{where} \; K_1 = \{b \in K \otimes \mathbb{Q}: \; t_b \in \Gamma \cap \ker(\chi)\}.$$

\subsection{Theta lift and Borcherds products}

We continue to assume that $L$ is a signature $(n, 2)$ lattice that splits in the form $L = U(N) \oplus K$ with tube domain $\mathbb{H}_K$ for some $N \in \mathbb{N}$, and write vectors of $L$ in the form $(a, v, b)$ with $a, b \in \mathbb{Z}$ and $v \in K$. The choice of connected component in the definition of $\mathbb{H}_K$ fixes a \emph{positive cone} for $K$: $$C := \{\mathrm{im}(z): \; z \in \mathbb{H}_K\} \subseteq K \otimes \mathbb{R}.$$

\begin{definition} Let $r \in L'$ be a vector of positive norm. The \emph{rational quadratic divisor} associated to $r$ is $$\mathcal{D}_r(L) = r^{\perp} \cap \mathcal{D}(L) = \{[\mathcal{Z}] \in \mathcal{D}(L): \; \langle \mathcal{Z}, r \rangle = 0\}.$$
\end{definition}

For any $\beta \in L'/L$ and $m \in \mathbb{Z} + Q(\beta)$, $m > 0$, one defines the \emph{Heegner divisor} of index $(m, \beta)$ as the union $$H(m, \beta) = \bigcup_{\substack{r \in L + \beta \\ Q(r) = m}} \mathcal{D}_r(L).$$ We also denote by $H(m, \beta)$ the preimage under the map $\phi : \mathbb{H}_K \rightarrow \mathcal{D}(L)$, $z \mapsto [1:z:-Q(z)]$. Note that with this definition there are inclusions $H(n^2 m, n \beta) \subseteq H(m, \beta)$ for all $n \in \mathbb{N}$.

\begin{theorem} Let $k \in \mathbb{N}$, $k \ge 2$ and define $\ell := k + 1 - n/2$. Let $$f(\tau) = \sum_{x \in L'/L} \sum_{n \in \mathbb{Z} - Q(x)} c(n, x)q^n e_x \in M_{\ell}^!(\rho_L)$$ be a nearly-holomorphic vector-valued modular form of weight $\ell$. Then the \emph{theta lift} \begin{align*} \Phi_f(z) &= -\frac{N^{k-1}}{2k} \sum_{a, b \in \mathbb{Z}/N\mathbb{Z}} e^{2\pi i ab / N} B_k(a/N) c(0, (b/N, 0, 0)) \\ &+ \sum_{\lambda \in K' \cap C}\sum_{j \in \mathbb{Z}/N\mathbb{Z}}  c(Q(\lambda), (j / N, \lambda, 0)) \sum_{n=1}^{\infty} e^{2\pi i jn / N}n^{k-1} \mathbf{q}^{n \lambda} \end{align*} is a meromorphic orthogonal modular form of weight $k$ without character on the discriminant kernel $\widetilde \Orth^+(L)$. $\Phi_f$ has a pole of order $k$ on every rational quadratic divisor $\mathcal{D}_r(L)$ with $c(-Q(r), r) \ne 0$ and is holomorphic elsewhere.
\end{theorem}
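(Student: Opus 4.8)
The plan is to realize $\Phi_f$ as a regularized singular theta lift in the sense of Borcherds \cite{Bo}, and then to read off its analytic properties from the behaviour of the theta kernel. Concretely, let $\Theta_L(\tau, \mathcal{Z})$ denote the Siegel theta function of $L$ attached to the majorant determined by $[\mathcal{Z}] \in \mathcal{D}(L)$, twisted by the homogeneous harmonic polynomial $p_{\mathcal{Z}}(\lambda) = \langle \lambda, \overline{\mathcal{Z}} \rangle^{k}$ of degree $k$ on the positive-definite part; with this polynomial $\Theta_L$ transforms in $\tau$ like a (non-holomorphic) modular form of weight $\ell = k + 1 - n/2$, compatibly with $\rho_L$, so that the pairing $\langle f(\tau), \overline{\Theta_L(\tau, \mathcal{Z})} \rangle$ is $\mathrm{Mp}_2(\mathbb{Z})$-invariant of weight $0$. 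I would then define
$$\Phi_f(\mathcal{Z}) = \int_{\mathcal{F}}^{\mathrm{reg}} \big\langle f(\tau), \overline{\Theta_L(\tau, \mathcal{Z})} \big\rangle \, v^{k} \, \frac{du \, dv}{v^{2}}, \qquad \tau = u + iv,$$
the integral being regularized in Borcherds' manner by integrating over the truncated fundamental domain $\mathcal{F}_T$ and extracting the constant term in the asymptotic expansion as $T \to \infty$; since $f$ is only nearly holomorphic, this constant-term regularization is exactly what is needed for convergence. The homogeneity $\Phi_f(t\mathcal{Z}) = t^{-k}\Phi_f(\mathcal{Z})$ is immediate from $p_{t\mathcal{Z}} = t^k p_{\mathcal{Z}}$, and invariance under $\widetilde{\Orth}^+(L)$ follows because such automorphisms fix the Weil representation and act on $\Theta_L$ only through the Grassmannian variable.

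The second step is the explicit Fourier expansion, obtained by the standard unfolding argument. Using the splitting $L = U(N) \oplus K$, write $\Theta_L$ as a sum over the primitive isotropic vector of the $U(N)$-block, apply Poisson summation in that variable, and unfold the $\tau$-integral against the stabilizer $\Gamma_{\infty}$ of the corresponding cusp. The contribution of the zero vector produces the constant term, in which the slowly decaying part of the kernel is integrated against the $q^0$-coefficients of $f$; evaluating the resulting $v$-integral in terms of $\zeta$-values and sorting by residues modulo $N$ yields precisely the Bernoulli-polynomial expression $-\tfrac{N^{k-1}}{2k}\sum_{a,b} e^{2\pi i ab/N} B_k(a/N) c(0,(b/N,0,0))$. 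The nonzero vectors contribute the sum over $\lambda \in K' \cap C$; here the degree-$k$ polynomial in the kernel produces, after the Gaussian $v$-integral and the sum over divisors $n$, the factor $n^{k-1}$ and the twist $e^{2\pi i jn/N}$, giving the second displayed sum. The same computation shows all Fourier coefficients are supported on $\lambda$ with $\langle \lambda, C \rangle \geq 0$, so by Koecher's principle (valid since $n \geq 3$) $\Phi_f$ extends holomorphically, away from its singular locus, to every cusp; and one checks directly from the expansion that the potentially non-holomorphic terms, which would carry incomplete $\Gamma$-factors, vanish identically because $\ell = k + 1 - n/2$ is exactly the critical weight at which the lift of a nearly holomorphic form is holomorphic. (Alternatively, $\bar\partial \Phi_f$ can be shown to be a current supported on the divisors, hence zero, plus a growth estimate.)

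The last and most delicate step is the analysis of the singularities. A term $c(-Q(r), r)\, q^{-Q(r)} e_r$ with $Q(r) = m > 0$ contributes, via the part of the theta kernel attached to vectors in the coset $r + L$ of norm $m$ and via the region of $\mathcal{F}$ where $v \to \infty$, a divergent integral whose regularization is singular precisely along $\mathcal{D}_r(L) = r^{\perp} \cap \mathcal{D}(L)$; localizing near that divisor and carrying out the local theta integral with the degree-$k$ polynomial shows that the singularity is a pole in $\langle \mathcal{Z}, r \rangle$ of order exactly $k$, with leading coefficient proportional to $c(-m, r)$, and that $\Phi_f$ is holomorphic away from $\bigcup_{c(-Q(r),r)\neq 0}\mathcal{D}_r(L)$. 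Carrying out this local computation — reproducing Borcherds' analysis while keeping track of the harmonic polynomial of degree $k$ and of the $N$-scaling of the hyperbolic plane, which is the source of the $N^{k-1}$ and of the additive characters $e^{2\pi i ab/N}$ and $e^{2\pi i jn/N}$ — is the main technical obstacle; modulo a translation of conventions, the statement is essentially \cite[Theorem~14.3]{Bo}.
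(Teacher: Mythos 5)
Your proposal is correct and takes essentially the same route as the paper: the paper's entire proof is the citation ``Theorem 14.4 of \cite{Bo}'', and your sketch is a faithful outline of how Borcherds establishes that result (the regularized singular theta integral, unfolding along the $U(N)$ splitting, and the local analysis of the order-$k$ pole), ending as you note by deferring to Borcherds himself. The only cosmetic discrepancy is that you point to Theorem~14.3 where the paper cites the companion statement Theorem~14.4.
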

Here $B_k$ is the $k$th Bernoulli polynomial.
\begin{proof} Theorem 14.4 of \cite{Bo}
\end{proof}

\begin{remark} The additive theta lift can also be applied when $k = 1$ but in this case the constant coefficient of $\Phi_f(z)$ must be corrected, as in \cite{Bo}. The formula of \cite{Bo} for the constant coefficient is rather complicated, and in practice (at least when the input form $f$ is holomorphic) it is easier to determine it from the constant terms of the elliptic modular forms $\Phi_f(\mu \tau) \in M_2(\Gamma_0(Q(\mu)))$, where $\mu \in K$ can be any vector of positive norm. Note that cusp forms of weight $2 - n/2$ can lift to non-cusp forms of weight $1$.
\end{remark}

We will often take additive theta lifts as generators for rings of modular forms, and the following criterion due to Bruinier for the injectivity of the theta lift is helpful:

\begin{theorem} Suppose $N = 1$; i.e. $L$ splits as $L = U \oplus K$, and that $K$ is isotropic. Then the theta lift on holomorphic forms is injective in every weight.
\end{theorem}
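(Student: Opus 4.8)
The plan is to prove injectivity by recovering $f$ from $\Phi_f$, exploiting the fact that the splitting $L = U \oplus K$ with $K$ isotropic is really a splitting $L = 2U \oplus K_0$ with $K_0$ positive-definite of rank $n-2$: being nondegenerate of signature $(n-1,1)$ and containing a primitive isotropic vector, $K$ splits off a hyperbolic plane, $K = U \oplus K_0$. Consequently $L'/L \cong K_0'/K_0 =: A$ and $\rho_L \cong \rho_{K_0}$.

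First I would pass to the Fourier--Jacobi expansion of $\Phi_f$ along the one-dimensional cusp of $L = 2U \oplus K_0$ determined by the span of the two hyperbolic planes, writing $\Phi_f = \sum_{m \ge 0} \phi_m$ with $\phi_m$ the part of Jacobi index $m$. The crux is that the index-one coefficient $\phi_1$ equals, up to a nonzero constant, the holomorphic Jacobi form of weight $k$ and index $1$ for $K_0$ attached to $f$ by the theta decomposition, $\widetilde f = \sum_{\gamma \in A} f_\gamma\, \vartheta_\gamma$ --- this is the statement that Borcherds' additive lift coincides with the Gritsenko (Jacobi) lift. Granting this, $\Phi_f \equiv 0$ forces $\phi_1 = 0$, hence $\widetilde f = 0$; and since $f \mapsto \widetilde f$ is a linear isomorphism from $M_\ell(\rho_L)$ onto the space of holomorphic Jacobi forms of weight $k$, index $1$ for $K_0$, we conclude $f = 0$.

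As a cross-check (and an alternative for weights $\ell \ge 1$ that avoids Jacobi forms), one can argue directly from the zero-dimensional-cusp expansion of $\Phi_f$ given in the statement. Comparing the coefficients of $\mathbf{q}^{\mu}$ and using Möbius inversion over the monoid $K' \cap C$ ordered by divisibility shows that $\Phi_f \equiv 0$ forces $c(Q(\lambda),\lambda) = 0$ for all $\lambda \in K' \cap C$, while the constant term forces $c(0,0) = 0$ (for $k$ even because $B_k(0) \ne 0$, for $k$ odd because $\kappa = k$ is then odd and $c(0,0) = (-1)^{\kappa} c(0,0)$). Here the isotropy of $K$ enters decisively: since $K$ contains a hyperbolic plane, the pairs $(Q(\lambda),\, \lambda + K)$ with $\lambda \in K' \cap C$ already exhaust every index $(m,\gamma)$, $m > 0$, that can support a nonzero Fourier coefficient of a form for $\rho_L$; hence $c(m,\gamma) = 0$ for all $m > 0$, and the residual coefficients $c(0,\gamma)$ with $\gamma$ isotropic are again disposed of via $\widetilde f$.

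The one substantial point is the identification $\phi_1 = (\mathrm{const}) \cdot \widetilde f$ --- that the leading Fourier--Jacobi coefficient of the additive lift is the input --- which is obtained by reorganizing Borcherds' Fourier-expansion formula according to the pairing with the second hyperbolic plane; here one must be careful about the choice of positive cone / Weyl chamber and about the corrected constant term when $k = 1$. A minor additional issue is weight $\ell \le 0$, where $\widetilde f$ has singular weight, which needs only the easy observation that the identification $\phi_1 = (\mathrm{const})\cdot\widetilde f$ (and hence the injectivity conclusion) persists there as well.
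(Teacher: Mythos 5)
Your argument has a genuine gap at its very first step, and the gap propagates through both of your proposed routes. A nondegenerate even lattice containing a primitive isotropic vector does \emph{not} in general split off a hyperbolic plane: a primitive isotropic $e \in K$ yields $K \cong U \oplus (e^{\perp}/\ZZ e)$ only when $\langle e, K\rangle = \ZZ$, and otherwise one gets at best a $U(N)$ with $N>1$, or not even that. The lattices to which the paper actually applies this theorem are exactly of that kind: for $L = U \oplus U(2)\oplus A_1$ one has $K = U(2)\oplus A_1$, which is isotropic but contains no copy of $U$, and for $L = U\oplus S_8$ the paper explicitly remarks that $S_8$ is isotropic but is not split by \emph{any} $U(N)$. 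So the reduction to $L = 2U\oplus K_0$, and with it the identification of the index-one Fourier--Jacobi coefficient of $\Phi_f$ with the theta decomposition $\widetilde f$ of $f$ as a weight-$k$, index-one Jacobi form, is unavailable precisely in the cases of interest. (When $K$ does split $U$ your first argument is the classical injectivity of the Gritsenko lift and is fine --- but then the theorem would be contentless for this paper.)

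The second, ``cross-check'' argument fails for the same reason at the exhaustion step. M\"obius inversion over divisibility in $K'\cap C$ does give $c(Q(\lambda),\lambda)=0$ for all $\lambda\in K'\cap C$, but the claim that the pairs $(Q(\lambda),\lambda+K)$ exhaust all admissible indices $(m,\gamma)$ with $m>0$ is exactly the assertion that each coset $\gamma+K$ represents every value in the appropriate arithmetic progression, and that again requires a hyperbolic plane inside $K$: with only $U(N)\subseteq K$ the vectors $\lambda_0+ae+bf$ have norms $Q(\lambda_0)+Nab$, covering only one congruence class modulo $N$. Concretely, for $K=U(2)\oplus A_1$ and $\gamma$ the order-two coset supported on $A_1'/A_1$, every $\lambda\in\gamma+K$ has $Q(\lambda)=2ab+c^2+c+\tfrac14\in\tfrac14+2\ZZ$, so an entire residue class of Fourier coefficients of $f$ is invisible in the expansion of $\Phi_f$ at this cusp. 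That invisibility is the whole difficulty of the theorem; it is why the paper does not prove it by inspecting Fourier expansions but instead cites the injectivity results of \cite{Br2}, which treat merely isotropic $K$ by an entirely different (cohomological/regularized theta lift) method. To repair your proof you would either have to reprove those results or restrict the hypothesis to $L\supseteq 2U$, which is too weak for the applications in Sections 3--7.
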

\begin{proof} This follows from Theorem 1.2, Corollary 1.3 and Theorem 4.2 of \cite{Br2}. 
\end{proof}

The spaces of additive lifts of vector-valued modular forms invariant under $\Gamma$ are called the \emph{Maass subspace}, denoted $\mathrm{Maass}_k(\Gamma)$.

The orthogonal Eisenstein series can be constructed through the theta lift:

\begin{definition} The \emph{Eisenstein series} $\mathcal{E}_k$ of weight $k \ge (n + 3)/2$, $k \in 2 \mathbb{Z}$ is the theta lift of the vector-valued Eisenstein series $E_{k+1-n/2, 0}(\tau)$.
\end{definition}
Note that our definition does not agree exactly with the definition of the Eisenstein series by averaging a constant function over a parabolic subgroup, i.e. the value at $s=0$ in $$E_k(z,s) = \sum_{g \in \Gamma_{\infty} \backslash \Orth^+(L)} j(g;z)^{-k} |j(g;z)|^{-2s} Q(y)^s, \; \; z = x + iy$$ where $\Gamma_{\infty} = \{g \in \Orth^+(L): \; j(g; z) = 1\}$. Instead, $E_k(z, s)$ is essentially the lift of the sum of all vector-valued Eisenstein series associated to the isotropic cosets in $L'/L$: $$E_k(z, s) = \frac{\pi^s (2\pi i)^k}{\Gamma(s+k) \zeta(2s+k)} \Phi_f(z), \; \; f = \sum_{\substack{\beta \in L'/L \\ Q(\beta) = 0}} E_{k + 1 - n/2, \beta}(\tau, s), \; \; \mathrm{Re}[s] \gg 0.$$ In either definition the constant term of the Eisenstein series is $$\frac{(2\pi i)^k}{\Gamma(k) \zeta(k)} = -\frac{2k}{B_k}$$ by convention, where $B_k$ is the $k$th Bernoulli number.

Finally we review Borcherds products. The Borcherds lift is a multiplicative map that takes nearly-holomorphic vector-valued modular forms to orthogonal modular forms, all of whose zeros and poles lie on rational quadratic divisors, and which are represented locally by infinite products. It is closely related to the case $k = 0$ of the (additive) theta lift.

\begin{theorem} Let $f(\tau) = \sum_{x \in L'/L} \sum_{n \in \mathbb{Z} - Q(x)} c(n, x) q^n e_x \in M_{1 - n/2}^!(\rho_L)$ be a nearly holomorphic modular form of weight $1 - n/2$ for which $c(n, x)$ is an integer for all $n \le 0$. There is a function $\Psi_f(z)$ on $\mathbb{H}_K$ with the following properties:
\begin{enumerate}
\item  $\Psi_f$ is a meromorphic modular form of weight $c(0, 0) / 2$. 
\item  The divisor of $\Psi_f$ on $\mathbb{H}_K$ is $$\mathrm{div}\, \Psi_f = \sum_{\beta \in L'/L} \sum_{\substack{m \in \mathbb{Z} + Q(\beta) \\ m > 0}} c(-m, \beta) H(m, \beta).$$
\item  On any connected component $W$ of $\mathbb{H}_K \backslash \bigcup_{c(-m, \beta) \ne 0} H(m, \beta)$ (i.e. a \emph{Weyl chamber}), $\Psi_f$ has the product representation $$\Psi_f(z) = C \cdot \mathbf{q}^{\rho} \prod_{\substack{\lambda \in K' \\ \langle \lambda, W \rangle < 0}} \prod_{j \in \mathbb{Z}/N\mathbb{Z}} \Big( 1 - e^{2\pi i j / N} \mathbf{q}^{\lambda} \Big)^{c(-Q(\lambda), (j/N, \lambda, 0))}$$ for some vector $\rho \in K \otimes \mathbb{Q}$ (called the \emph{Weyl vector} of $\Psi_f$ on $W$) and some constant $C$ of absolute value $N$.
\end{enumerate}
\end{theorem}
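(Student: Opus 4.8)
The plan is to follow Borcherds' construction of $\Psi_f$ through a regularized theta integral (\cite{Bo}, \S6, \S10, \S13). Let $\Theta_L(\tau; \mathcal{Z})$ be the Siegel theta function attached to $L$ and a point $[\mathcal{Z}] \in \mathcal{D}(L)$; it is a non-holomorphic $\mathbb{C}[L'/L]$-valued function of $\tau \in \mathbb{H}$ whose transformation law under $\mathrm{Mp}_2(\mathbb{Z})$ makes $\bigl\langle f(\tau), \overline{\Theta_L(\tau; \mathcal{Z})} \bigr\rangle\, \mathrm{Im}(\tau)^{n/2}$ invariant under $\mathrm{SL}_2(\mathbb{Z})$. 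One forms the Harvey--Moore regularized integral
$$\Phi_f([\mathcal{Z}]) = \int_{\mathcal{F}}^{\mathrm{reg}} \bigl\langle f(\tau), \overline{\Theta_L(\tau; \mathcal{Z})} \bigr\rangle\, \mathrm{Im}(\tau)^{n/2}\, d\mu(\tau)$$
over the standard fundamental domain $\mathcal{F}$ of $\mathrm{SL}_2(\mathbb{Z})$, with $d\mu$ the hyperbolic measure. The transformation laws of $f$ and $\Theta_L$ show that $\Phi_f$ is well-defined and invariant under $\widetilde{\Orth}^+(L)$, and a local study of the integral near a rational quadratic divisor shows that $\Phi_f$ is real-analytic on $\mathcal{D}(L) \setminus \bigcup_{c(-Q(r), r) \neq 0} \mathcal{D}_r(L)$ and has, along each $\mathcal{D}_r(L)$, a logarithmic singularity of the shape $-2\,c(-Q(r), r)\,\log\bigl|\langle \mathcal{Z}, r\rangle\bigr| + (\text{real-analytic})$.

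The main step is to expand $\Phi_f$ near the cusp determined by the splitting $L = U(N) \oplus K$, on the tube domain $\mathbb{H}_K$. One unfolds the integral against the stabilizer $\Gamma_\infty$ of the cusp of $\mathbb{H}$, applies Poisson summation in the $K \otimes \mathbb{R}$-directions of the theta sum, and integrates term by term; after fixing a Weyl chamber $W$ to pin down the signs $\langle \lambda, W \rangle$ for $\lambda \in K'$, this yields an identity
$$\Phi_f(z) = \kappa_0 \;+\; 4\pi\langle \rho, y\rangle \;-\; \tfrac{c(0,0)}{2}\log Q(y) \;-\; 2\!\!\sum_{\substack{\lambda \in K',\ \langle \lambda, W\rangle < 0 \\ j \bmod N}}\!\! c\bigl(-Q(\lambda),\, (j/N, \lambda, 0)\bigr)\,\log\bigl|1 - e^{2\pi i j/N}\mathbf{q}^{\lambda}\bigr|,$$
where $y = \mathrm{Im}(z)$, $\rho \in K \otimes \mathbb{Q}$ is the Weyl vector (given by an explicit Bernoulli-type formula in the $c(-Q(\lambda), \cdot)$), and $\kappa_0 \in \mathbb{R}$ is a constant. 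Since $f$ is nearly holomorphic there are only finitely many $\lambda$ with $c(-Q(\lambda), (\cdot, \lambda, 0)) \neq 0$ up to positive rescaling, and $\mathbf{q}^{\lambda} \to 0$ as $\langle \lambda, y\rangle \to \infty$ inside $W$, so the infinite product
$$\Psi_f(z) := C\,\mathbf{q}^{\rho}\!\!\prod_{\substack{\lambda \in K' \\ \langle \lambda, W\rangle < 0}}\ \prod_{j \bmod N}\bigl(1 - e^{2\pi i j/N}\mathbf{q}^{\lambda}\bigr)^{c(-Q(\lambda),\, (j/N, \lambda, 0))}$$
converges absolutely and without zeros on a neighbourhood of the cusp inside $W$, for a constant $C$ with $|C| = N$ produced by the $N$ roots of unity in the Fourier computation, and it satisfies $-2\log|\Psi_f(z)| = \Phi_f(z) + \tfrac{c(0,0)}{2}\log Q(y) + \mathrm{const}$.

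Finally I would deduce the global assertions. Since $\Phi_f$ is $\widetilde{\Orth}^+(L)$-invariant while $\log Q(\mathrm{Im}(z))$ transforms under $g \in \widetilde{\Orth}^+(L)$ by the additive cocycle $-2\log|j(g; z)|$, the identity above forces $|\Psi_f(g\cdot z)| = |j(g;z)|^{c(0,0)/2}\,|\Psi_f(z)|$; hence $\Psi_f(g\cdot z)\big/\bigl(j(g;z)^{c(0,0)/2}\Psi_f(z)\bigr)$ is a holomorphic function of modulus one, so a constant of modulus one, which is the multiplier system of $\Psi_f$, and $\Psi_f$ has weight $c(0,0)/2$. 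Crossing a wall $\mathcal{D}_r(L)$ changes only the finitely many factors whose $\langle \lambda, W\rangle$ reverses sign, so the product representations on neighbouring Weyl chambers glue to one meromorphic function on $\mathbb{H}_K$; the singularity analysis of $\Phi_f$ shows this function vanishes (or has a pole) to order exactly $c(-Q(r), r)$ along each $\mathcal{D}_r(L)$, and summing over $r \in L + \beta$ with $Q(r) = m$ gives $\mathrm{div}\,\Psi_f = \sum_{\beta}\sum_{m > 0} c(-m, \beta)\,H(m, \beta)$. Koecher's principle (when $n \geq 3$), or a direct growth estimate in general, extends $\Psi_f$ holomorphically across the cusps. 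The technical heart of the argument --- and the step I expect to be the main obstacle --- is the Fourier expansion of the second paragraph: justifying the interchange of summation and regularized integration, handling the divergent contribution near $\tau \to i\infty$ built into the regularization, and organizing the output into Weyl-chamber product form. This is exactly the analysis of \S6--\S7 and \S13 of \cite{Bo}, and there seems to be no way around it.
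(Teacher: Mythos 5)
Your proposal is a faithful outline of Borcherds' construction via the regularized theta integral, which is exactly what the paper relies on: this theorem is quoted verbatim from \cite{Bo} (Theorem 13.3) and the paper supplies no independent argument. Your sketch matches that proof step for step (singularity analysis of $\Phi_f$, unfolding and Poisson summation to obtain the Weyl-vector expansion, wall-crossing, and extension over the cusps), and the analytic core you flag as unavoidable is indeed the content of \S 6--\S 7 and \S 13 of \cite{Bo}.
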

Here, $\langle \lambda, W \rangle < 0$ means that $\langle \lambda, w \rangle < 0$ for every $w \in W$. When $L$ is a lattice of the form $U(N_1) \oplus U(N_2) \oplus L_0$ with $N_1, N_2 \in \mathbb{N}$ and $L_0$ positive-definite, one can compute the Weyl vector using Theorem 10.4 of \cite{Bo}. This applies to all but two of the lattices considered in this paper. In the remaining cases, we compute the Weyl vectors recursively using the method of quasi-pullbacks (briefly described in the next paragraphs).

In general, any isometric embedding $\phi : L_1 \rightarrow L_2$ of lattices of signature $(\ell_1, 2)$ and $(\ell_2, 2)$ induces a map on symmetric spaces $$\phi : \mathcal{D}(L_1) \rightarrow \mathcal{D}(L_2), \; [\mathcal{Z}] \mapsto [\phi(\mathcal{Z})]$$ and also on affine cones. The \emph{pullback} along $\phi$, $$\phi^* : F \mapsto F \circ \phi$$ sends modular forms on $L_2$ to modular forms on $L_1$ of the same weight.

Suppose $L_1$ and $L_2$ are of the form $L_1 = U(N) \oplus K_1$ and $L_2 = U(N) \oplus K_2$ and that $\phi$ arises from an embedding $K_1 \rightarrow K_2$ by acting trivially on the $U(N)$ component. Then the pullbacks of a theta lift $F$ on $L_2$ can be computed as theta lifts on $L_1$, and therefore do not require the form $F$ to be computed at all. By applying this idea carefully, it is sometimes possible to reduce questions about generators and relations for modular forms on $L_2$ to modular forms on $L_1$.

In the simplest case, suppose $\phi(K_1)$ is an orthogonal direct summand in $K_2$, i.e. $K_2 = \phi(K_1) \oplus M$ where $M$ is necessarily positive-definite. The basic vector-valued modular forms for the Weil representation for $K_2$ are tensors of the form $f(\tau) \otimes g(\tau)$ where $f \in M_*(\rho_{\phi(K_1)}) \cong M_*(\rho_{K_1})$ and $g \in M_*(\rho_M)$. In this case one can show that the pullback of the lift of $f \otimes g$ is given by $$\phi^* \Phi_{f \otimes g} = \Phi_{f \cdot \langle g, \Theta_M \rangle},$$ where $\langle g, \Theta_M \rangle$ is the scalar-valued modular form $$\langle g, \Theta_M \rangle = \sum_{x \in M'/M} g_x(\tau) \theta_{M, x}(\tau) = \sum_{x \in M'} g_{x + M}(\tau) q^{Q(x)} \in M_*(\mathrm{SL}_2(\mathbb{Z}))$$ if $g(\tau) = \sum_{x \in M'/M} g_x(\tau) e_x$ and if $M$ has theta function $$\Theta_M(\tau) = \sum_{x \in M'} q^{Q(x)} e_x = \sum_{x \in M'/M} \theta_{M, x}(\tau) e_x.$$

In the general case, one can always pass from $K_2$ to a finite-index sublattice in which $\phi(K_1)$ does split as a direct summand by using the down- and up-arrow operators of \cite{Br}. Then the result is that $\phi^* \Phi_f$ is the additive lift of the \emph{theta-contraction} of $f$ along $\phi$. A similar statement holds for Borcherds products and the \emph{quasi-pullback}. We refer to \cite{Ma} for details.

\subsection{The modular Jacobian}

The modular Jacobian is the main tool in our approach to free algebras of modular forms. This was introduced in \cite{AI} for lattices of signature $(3, 2)$. We continue to assume that $L$ is of signature $(n, 2)$ and splits as $U(N) \oplus K$, and we fix coordinates $z_1,...,z_n$ on the tube domain $\mathbb{H}_K$. For a modular form $F : \mathbb{H}_K \rightarrow \mathbb{C}$ let $\nabla F$ denote the gradient $(\partial_{z_1} F,...,\partial_{z_n} F)^T$.

We first recall the definition of reflections. The reflection fixing the rational quadratic divisor $\cD_r(L)$ is defined as
\begin{equation*}
\sigma_r(x)=x-\frac{2\latt{r,x}}{\latt{r,r}}r,  \quad x\in L.
\end{equation*}
The hyperplane $\cD_r(M)$ is called the \emph{mirror} of $\sigma_r$. For a non-zero vector $r\in L'$ we denote its order in $L'/L$ by $\ord(r)$.
For any primitive vector $r\in L'$ of positive norm, $\sigma_r\in\Orth^+(L)$ if and only if there exists a positive integer $d$ such that $\latt{r,r}=\frac{2}{d}$ and $\ord(r)=d$ or $\frac{d}{2}$.  We remark that for any primitive vector $r\in L$  the reflection $\sigma_r$ belongs to $\widetilde{\Orth}^+(L)$ if and only if $\latt{r,r}=2$.

\begin{theorem}\label{th:j} Let $F_1,...,F_{n+1}$ be orthogonal modular forms of weights $k_1,...,k_{n+1}$ for a finite-index subgroup $\Gamma < \Orth^+(L)$ and define $$J := J(F_1,...,F_{n+1}) = \mathrm{det} \begin{pmatrix} k_1 F_1 & ... & k_{n+1} F_{n+1} \\ \nabla F_1 & ... & \nabla F_{n+1} \end{pmatrix}.$$ Then: 
\begin{enumerate}
\item $J$ is a cusp form of weight $n + \sum_{i=1}^{n+1} k_i$ for $\Gamma$ with the determinant character $\mathrm{det}$. 
\item $J \ne 0$ if and only if $F_1,...,F_{n+1}$ are algebraically independent. 
\item Let $r \in L'$ and suppose $\Gamma$ contains the reflection $\sigma_r$. Then $J$ vanishes on the rational quadratic divisor $\mathcal{D}_r(L)$. 
\end{enumerate}
Now suppose $M_*(\Gamma)$ is a free algebra and $F_1,...,F_{n+1}$ are generators. Then $J$ satisfies the following additional properties:  
\begin{enumerate}
\item[(4)] The divisor of $J$ consists exactly of simple zeros on the mirrors of reflections in $\Gamma$. In particular, $J$ is a reflective cusp form. 
\item[(5)] If $\{\Gamma \pi_1,...,\Gamma \pi_s\}$ denote the $\Gamma$-equivalence classes of mirrors of reflections in $\Gamma$, then for each $1 \le i \le s$ there exists a modular form $J_i$ for $\Gamma$ with trivial character and with divisor $\mathrm{div}(J_i) = 2 \Gamma \pi_i$, and the irreducible factorization of $J^2$ in $M_*(\Gamma)$ is $$J^2 = \prod_{i=1}^s J_i.$$
\end{enumerate}
\end{theorem}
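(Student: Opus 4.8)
The plan is to prove the five assertions more or less in the order stated, using the automorphy of each $F_i$ and the standard description of the $\Orth^+(L)$-action on the tube domain together with the cocycle $j(g;z)$.

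\emph{Assertions (1)--(3).} First I would establish the transformation law. Writing each modular form as a function on the tube domain, the key identity is that under $g \in \Gamma$ one has $F_i(g\cdot z) = \chi_i(g)\, j(g;z)^{k_i} F_i(z)$; here all $F_i$ are taken to have the same character or, for the purposes of the theorem, trivial character, and $J$ will automatically pick up $\det$. Differentiating this relation with respect to $z$ (chain rule) expresses the row vector $(k_i F_i, \nabla F_i)$ at $g\cdot z$ in terms of its value at $z$ via a single $(n+1)\times(n+1)$ matrix $\mathbf{M}(g,z)$ built from $j(g;z)$, its gradient, and the linear action of $g$ on $K\otimes\CC$; the crucial point is that this matrix is \emph{independent of $i$}, so it factors out of the determinant. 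Taking determinants gives $J(g\cdot z) = \det(g)\, j(g;z)^{\,n+\sum k_i}\, J(z)$: the weight is $n + \sum k_i$ (the extra $n$ coming from the $n$ gradient rows each contributing one power of $j$, plus the factor from $\det \mathbf{M}$), and the character is exactly $\det$. That $J$ is holomorphic on $\mathcal{A}(L)$ and at the cusps follows since it is a polynomial in the $F_i$ and their first derivatives, all of which extend holomorphically; the vanishing at cusps (so that $J$ is a \emph{cusp} form) follows by evaluating the Fourier expansion at a $0$-dimensional cusp — the lowest-order Fourier--Jacobi term of each $F_i$ is a function of fewer variables, and in the Jacobian determinant the corresponding partial derivatives annihilate it, forcing the constant term of $J$ to vanish; alternatively one notes that a holomorphic form of nontrivial (here $\det$) character must vanish at the cusp, provided one first checks $\det|_{\Gamma_\infty}$ is nontrivial, which it is because $\Gamma_\infty$ contains a reflection-type element acting by $-1$ on a coordinate. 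Assertion (2) is the statement that the Jacobian criterion detects algebraic independence: if $F_1,\dots,F_{n+1}$ are algebraically dependent, a minimal polynomial relation $P(F_1,\dots,F_{n+1})=0$ differentiated in each $z_j$ shows the columns of the Jacobian matrix satisfy the linear relation with coefficients $\partial_i P(F_1,\dots,F_{n+1})$, not all zero by minimality and nonconstancy of the weights, so $J\equiv 0$; conversely if $J\equiv 0$ then the $F_i$ satisfy a first-order linear PDE with modular coefficients and one deduces (by the implicit function theorem / rank considerations on the image of $\mathbb{H}_K$ in $\CC^{n+1}$) that the image lies in a hypersurface, i.e. the $F_i$ are algebraically dependent. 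For assertion (3): if $\sigma_r \in \Gamma$ then $\sigma_r$ fixes $\mathcal{D}_r(L)$ pointwise, and on $\mathcal{D}_r(L)$ the functional equation $J(\sigma_r\cdot z) = \det(\sigma_r) j(\sigma_r;z)^m J(z)$ with $\det(\sigma_r)=-1$ and $j(\sigma_r;z)=1$ along the fixed locus gives $J = -J$, hence $J$ vanishes on $\mathcal{D}_r(L)$.

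\emph{Assertions (4)--(5), assuming $M_*(\Gamma)$ free with generators $F_1,\dots,F_{n+1}$.} The idea is a dimension/degree count via Hilbert series combined with the known vanishing from (3). Since the algebra is free, $\mathrm{Hilb}_{M_*(\Gamma)}(t) = \prod_{i=1}^{n+1}(1-t^{k_i})^{-1}$. On the other hand, one has a general lower bound for the divisorial vanishing of $J$: by (3), $\div(J) \ge \sum_i \pi_i'$ where $\pi_i'$ runs over all mirrors of reflections in $\Gamma$ (each with multiplicity at least one). The claim in (4) is that equality holds and all these zeros are simple. I would argue this by comparing two expressions for the weight: on one side the weight of $J$ is $n+\sum k_i$; on the other side, if $\div(J)$ contained any mirror with higher multiplicity or any extra divisorial or cuspidal vanishing, then $J$ divided by the product of reflective Borcherds-type forms (or, more intrinsically, by the forms $J_i$ to be constructed) would still be a holomorphic modular form, and tracking weights one finds the quotient would be a holomorphic form of weight $< $ the minimal generator weight with appropriate character — forcing it to be constant, which pins down the divisor exactly. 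Concretely, for each $\Gamma$-orbit of mirrors $\Gamma\pi_i$ one builds $J_i$ as follows: the form $J^2$ has trivial character and its divisor is $\Gamma$-invariant and supported on $\bigcup_i 2\Gamma\pi_i$ (once (4) is known); since $M_*(\Gamma)$ is a UFD (it is a polynomial ring), $J^2$ factors into irreducibles, and each irreducible factor has an irreducible divisor, which must be a single $\Gamma$-orbit of mirrors — grouping the factors by orbit gives $J^2 = \prod_i J_i$ with $\div(J_i) = 2\Gamma\pi_i$ and trivial character. The only subtlety is that an irreducible element of the polynomial ring could a priori have a divisor that is a proper multiple of $2\Gamma\pi_i$, or combine two orbits; this is ruled out because $J^2$ has \emph{simple} zeros on each component of $2\Gamma\pi_i$ after the factor of $2$ is accounted for — more precisely $J$ itself has simple zeros, so $J^2$ has double zeros and no factor can contribute more than that. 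I would therefore first prove (4) (simplicity and exactness of $\div J$) and then deduce (5) from unique factorization.

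\emph{Main obstacle.} The genuinely delicate step is assertion (4): upgrading the a priori lower bound $\div(J) \ge \sum(\text{mirrors})$ to an exact equality with multiplicity one, and simultaneously ruling out any "extra" components of $\div(J)$ not coming from reflections. The clean way is the Hilbert-series bookkeeping: the reflective cusp form $J$ of weight $n+\sum k_i$ and character $\det$ is, up to scalar, \emph{unique} in its weight (because dividing two such forms gives a nowhere-vanishing modular function of weight $0$, hence a constant), and one computes that the candidate $\prod_i (\text{a reflective form cutting out } 2\Gamma\pi_i)^{1/2}$ has exactly the predicted weight when and only when the zeros are simple; any discrepancy in weight is then a contradiction. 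Making this rigorous requires knowing that a nowhere-vanishing holomorphic modular form of weight $0$ on a modular variety of dimension $n\ge 3$ is constant (Koecher's principle plus the fact that $X_\Gamma$ is, after the Baily--Borel compactification, a projective variety so has no nonconstant global units), which is available here since $n \ge 2$. Everything else — the transformation law, the cusp-vanishing, and the UFD factorization — is routine once the correct cocycle computation in step (1) is in hand.
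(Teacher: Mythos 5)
The paper itself does not prove this theorem; it is quoted from Theorems 2.5 and 3.5 of \cite{Wan}, so your proposal should be measured against that source. Your treatment of (1)--(3) is essentially the right argument and matches \cite{Wan}: the cocycle matrix relating $(k_iF_i,\nabla F_i)$ at $g\cdot z$ and at $z$ is independent of $i$ and has determinant $\det(g)\,j(g;z)^{n+1}$ up to the weight factors, and the vanishing on mirrors follows from $\det(\sigma_r)=-1$ together with $j(\sigma_r;z)=1$ on the fixed locus. Two caveats. First, your ``alternative'' cuspidality argument (that $\det$ restricted to the stabilizer of a cusp is nontrivial) is false for a general finite-index $\Gamma$; the correct argument is the one you sketch first, made precise as follows: the Fourier coefficient of $J$ at $\mu$ is a sum over decompositions $\mu=\lambda_1+\cdots+\lambda_{n+1}$ of $\det\bigl(\begin{smallmatrix}k_i\\ \lambda_i\end{smallmatrix}\bigr)\prod_i c_i(\lambda_i)$, and when $\mu$ lies on the boundary of the positive cone all the $\lambda_i$ are forced onto a single ray by convexity, so the determinant has rank $\le 2<n+1$ and vanishes. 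Second, the character of $J$ is $\det\cdot\prod_i\chi_i$ in general; the statement implicitly assumes the $F_i$ have trivial character.

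The genuine gap is assertion (4), which you correctly identify as the delicate step but do not actually prove. Your ``Hilbert-series bookkeeping'' is circular: the uniqueness of a cusp form with prescribed divisor in its weight only helps once you already have a second form with exactly that divisor, and your candidate comparison form is built from ``reflective Borcherds-type forms cutting out $2\Gamma\pi_i$'' whose existence is not given for an arbitrary $\Gamma$ (indeed their existence is essentially what (5) asserts, downstream of (4)). Unlike the finite reflection group case, there is no Molien-series identity here equating $n+\sum k_i$ with a count of mirrors, so no degree count is available a priori. The actual proof in \cite{Wan} is geometric: freeness identifies $\Proj M_*(\Gamma)$ with a weighted projective space, $J$ is the Jacobian determinant of the quotient map $\mathcal{D}(L)\to\Proj M_*(\Gamma)$, and one invokes the local structure of quotient singularities: the map is a local biholomorphism away from points with nontrivial stabilizer (so $J$ is nonvanishing there, ruling out your ``extra components''), the divisorial part of the fixed locus consists exactly of mirrors of reflections, and near a generic point of a mirror the map is $(z_1,\dots,z_n)\mapsto(z_1^2,z_2,\dots,z_n)$, whose Jacobian vanishes to order exactly one. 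With (4) in hand, your derivation of (5) from unique factorization in the polynomial ring $M_*(\Gamma)$, plus transitivity of $\Gamma$ on the components of the preimage of an irreducible divisor downstairs, is sound.
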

\begin{proof} This is proved in Theorem 2.5 and Theorem 3.5 of \cite{Wan}.
\end{proof}

The following converse to Theorem \ref{th:j} provides a sufficient criterion for a graded ring of modular forms to be free:

\begin{theorem}\label{th:j2} Let $\Gamma < \Orth^+(L)$ be a finite-index subgroup and suppose there are modular forms $F_1,...,F_{n+1}$ with trivial character on $\Gamma$ whose Jacobian $$J = J(F_1,...,F_{n+1})$$ vanishes exactly on the mirrors of reflections in $\Gamma$ with multiplicity one. Then $M_*(\Gamma)$ is freely generated by $F_1,...,F_{n+1}$, and $\Gamma$ is generated by reflections whose mirrors lie in the divisor of $J$.
\end{theorem}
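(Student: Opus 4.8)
The plan is to deduce Theorem \ref{th:j2} from the chain of equivalences and obstructions packaged in Theorem \ref{th:j}, rather than reconstructing the freeness argument from scratch. First I would observe that, by property (2) of Theorem \ref{th:j}, the hypothesis $J = J(F_1,\dots,F_{n+1}) \ne 0$ forces $F_1,\dots,F_{n+1}$ to be algebraically independent. Since $M_*(\Gamma)$ has Krull dimension $n+1$ (it is the homogeneous coordinate ring of the $(n+1)$-dimensional affine cone $\mathcal{A}(L)$ modulo $\Gamma$, or one invokes the standard fact that the transcendence degree of the field of meromorphic modular functions is $n$), the subring $\mathbb{C}[F_1,\dots,F_{n+1}] \subseteq M_*(\Gamma)$ is a polynomial ring over which $M_*(\Gamma)$ is a finite module. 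So the entire problem reduces to showing this module extension is trivial, i.e.\ that $M_*(\Gamma)$ is integrally generated by the $F_i$.

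The key quantitative input is a comparison of Hilbert series. Let $k_i = \operatorname{wt}(F_i)$ and consider the would-be Hilbert series $\prod_{i=1}^{n+1}(1-t^{k_i})^{-1}$. The standard way to pin down $M_*(\Gamma)$ is to compare this with the true Hilbert series of $M_*(\Gamma)$, which one controls via the Hirzebruch--Mumford proportionality / Riemann--Roch formula for the modular variety $X_\Gamma$: the leading pole at $t \to 1$ has order $n+1$ with a residue proportional to $\operatorname{vol}(X_\Gamma)\prod k_i^{-1}$ on one side, and to the same volume quantity on the other, so the leading terms already match. The finer statement — that they agree to all orders — is exactly where the divisor information about $J$ enters. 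Here is the mechanism I would use: since $M_*(\Gamma)$ is free over $\mathbb{C}[F_1,\dots,F_{n+1}]$ (Cohen--Macaulayness of the ring of modular forms, which holds because it is a normal graded ring that is finite over a polynomial subring — or one cites that $X_\Gamma$ together with its cusps is arithmetically Cohen--Macaulay, e.g.\ as used in \cite{Wan}), we may write $M_*(\Gamma) = \bigoplus_{j=1}^N g_j \cdot \mathbb{C}[F_1,\dots,F_{n+1}]$ for homogeneous $g_j$ with $g_1 = 1$. The goal is $N = 1$.

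The main obstacle — and the heart of the argument — is to extract $N=1$ from the divisor hypothesis on $J$. I would argue as follows: by the chain rule, the Jacobian of $M_*(\Gamma)$ as a free module, i.e.\ $J(g_j F_1^{a_1}\cdots,\dots)$ computed for any transcendence basis, is divisible by $J(F_1,\dots,F_{n+1})$ with the module ``codifferent'' $\prod_{j} g_j / (\text{something})$ as quotient; more precisely, the different of the extension $\mathbb{C}[F_1,\dots,F_{n+1}] \subseteq M_*(\Gamma)$ is generated by the Jacobian determinant of a set of module generators over $F_1,\dots,F_{n+1}$, and one shows that any nonzero modular form whose divisor is \emph{reduced and supported exactly on reflection mirrors} must be, up to scalar, the Jacobian of a set of generators — this is the deep geometric content, because a reflection mirror is precisely where the quotient map $\mathcal{D}(L) \to Y_\Gamma$ ramifies, and a classical argument à la Chevalley--Shephard--Todd (in the form worked out for orthogonal groups in \cite{Wan}, building on \cite{VP89}) says the ramification divisor of $Y_\Gamma \to Y_{\langle \text{reflections}\rangle}$ is reduced on mirrors iff the action is ``reflection-like'' iff the ring is free. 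Concretely: the existence of $J$ with exactly simple zeros on all mirrors of reflections in $\Gamma$, combined with property (5)-type factorization, shows the branch divisor is as small as possible, which forces $\Gamma$ to be generated by those reflections and forces the module rank $N$ to be $1$. I would then cite that this circle of ideas — the precise converse — is exactly Theorem 3.5 (or its corollary) of \cite{Wan}, so that the proof here amounts to verifying the hypotheses and quoting that result. The statement that $\Gamma$ is generated by the reflections whose mirrors lie in $\operatorname{div}(J)$ then follows because any element of $\Gamma$ acting trivially on $M_*(\Gamma)/\langle \text{those reflections}\rangle$-invariants must fix $Y_\Gamma$ pointwise, hence be trivial; equivalently it is part of the same package in \cite{Wan}. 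The one genuinely new thing to check in each application is that $J$ really does vanish to order exactly one and nowhere else, which is an input, not part of this proof.
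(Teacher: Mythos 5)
The paper disposes of this statement by citing Theorem 5.1 of \cite{Wan}, so insofar as your argument ultimately defers to \cite{Wan} it lands in the same place; but note that you point to Theorem 3.5 of \cite{Wan}, which is the \emph{necessary} direction (the factorization statement reproduced as parts (4)--(5) of Theorem \ref{th:j}), whereas the converse you need is Theorem 5.1. More importantly, the reconstruction you build around that citation has a genuine gap at the decisive step. Having written $M_*(\Gamma)=\bigoplus_{j=1}^N g_j\,\mathbb{C}[F_1,\dots,F_{n+1}]$ via Cohen--Macaulayness, you must show $N=1$, and the mechanism you propose --- that the different of the extension is ``generated by the Jacobian determinant of a set of module generators,'' and that any nonzero form with reduced divisor supported exactly on the mirrors ``must be, up to scalar, the Jacobian of a set of generators'' --- is not proved and is, as stated, circular: the latter assertion presupposes the existence of a free generating set, which is the conclusion. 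The Hirzebruch--Mumford/Hilbert-series comparison only matches leading asymptotics and cannot by itself exclude $N>1$, and the Chevalley--Shephard--Todd ramification heuristic is exactly the point that needs an actual argument in this infinite-dimensional, noncompact setting.

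The standard route (and the shape of the argument in \cite{Wan}) is more elementary and avoids differents and Hilbert series entirely: for an arbitrary $F\in M_k(\Gamma)$ one forms the replaced Jacobians $J_i:=J(F_1,\dots,F_{i-1},F,F_{i+1},\dots,F_{n+1})$; by part (3) of Theorem \ref{th:j} each $J_i$ vanishes on every mirror of a reflection in $\Gamma$, and since by hypothesis $J$ has only \emph{simple} zeros there and no zeros elsewhere, each quotient $J_i/J$ is a holomorphic modular form of strictly smaller weight $k-k_i$ by Koecher's principle. This is precisely where the divisor hypothesis is consumed, and an induction on weight using these quotients expresses $F$ as a polynomial in $F_1,\dots,F_{n+1}$; algebraic independence is part (2) of Theorem \ref{th:j}, and the statement that $\Gamma$ is generated by the reflections with mirrors in $\operatorname{div}(J)$ then follows from the necessary direction applied to the now-established free generators. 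Your closing remark --- that the content to verify in each application is the divisor condition on $J$ --- is correct, but the converse theorem itself requires either the induction above or the citation to the correct result, not the ramification-divisor heuristic.
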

\begin{proof} This is proved in Theorem 5.1 of \cite{Wan}.
\end{proof}

\section{The \texorpdfstring{$U\oplus U(2)\oplus R$}{} tower}\label{sec:3}

In this section we will compute the algebras of modular forms for the lattices $U \oplus U(2) \oplus R$, where $R$ belongs to the tower of root lattices $$A_1 \subseteq A_2 \subseteq A_3 \subseteq D_4.$$ We compute the algebra of modular forms for $U \oplus U(2) \oplus A_1$ by showing that the Jacobian of a set of potential generators is a Borcherds product with the appropriate divisor, and we compute the algebras of modular forms for $U \oplus U(2) \oplus R$ for the larger-rank root lattices $R$ using the method of pullbacks.

\subsection{Modular forms on \texorpdfstring{$U\oplus U(2)\oplus A_1$}{}}

The dimensions of spaces of modular forms for the Weil representation $\rho$ associated to the lattice $L = U \oplus U(2) \oplus A_1$ have the generating function $$\sum_{k = 0}^{\infty} \mathrm{dim}\, M_{k + 3/2}(\rho) t^k = \frac{(1 + t^2)^3}{(1 - t^4)(1 - t^6)}.$$ In particular $\mathrm{dim}\, M_{3/2}(\rho) = 1$; and $\mathrm{dim}\, M_{7/2}(\rho) = 3$; and $\mathrm{dim}\, M_{11/2}(\rho) = 4$. Since $L$ splits a unimodular plane over $\mathbb{Z}$, the additive lift is injective and the spaces of additive lifts of weights $2$, $4$ and $6$ have dimension $1$, $3$ and $4$ respectively. We fix a nonzero additive lift of weight $2$ denoted $m_{2,A_1}$. The lattice $L$ admits Borcherds products whose input forms' principal parts with respect to the Gram matrix $$\begin{psmallmatrix} 0 & 0 & 0 & 0 & 1 \\ 0 & 0 & 0 & 2 & 0 \\ 0 & 0 & 2 & 0 & 0 \\ 0 & 2 & 0 & 0 & 0 \\ 1 & 0 & 0 & 0 & 0 \end{psmallmatrix}$$ are given in Table \ref{tab:U+U2+A1} below.

\begin{table}[htbp]
\centering
\caption{Some Borcherds products for $U \oplus U(2) \oplus A_1$}
\label{tab:U+U2+A1}
\begin{tabular}{l*{3}{c}r}
\hline
Name & Weight & Principal part\\
\hline
$b_{1, A_1}$ & $2$ & $4 e_0 + q^{-1/4} e_{(0, 0, 1/2, 1/2, 0)}$ \\
$b_{2, A_1}$ & $2$ & $4 e_0 + q^{-1/4} e_{(0, 1/2, 1/2, 0, 0)}$ \\
$b_{3, A_1}$ & $3$ & $6 e_0 + q^{-1/4} e_{(0, 0, 1/2, 0, 0)}$ \\
$b_{4, A_1}$ & $4$ & $8 e_0 + q^{-1/2} e_{(0, 1/2, 0, 1/2, 0)}$\\
$\Phi_{19,A_1}$ & $19$ & $38 e_0 + q^{-1} e_0$ \\
\hline
\end{tabular}    
\end{table}

In the rest of this subsection we will abbreviate $b_{j, A_1}$ by $b_j$.

Following the notations in the introduction, $\widetilde \Orth_r(L)$ is the subgroup of $\Orth^+(U \oplus U(2) \oplus A_1)$ generated by the reflections associated to the divisor of $\Phi_{19,A_1}$ (i.e. all reflections in the discriminant kernel). Each of $b_1, b_2, b_3$ has a character of order two under $\widetilde \Orth_r(L)$, since they have only simple zeros along a divisor that is preserved under those reflections.

\begin{lemma} The Jacobian $J = J(m_{2}, b_1^2, b_2^2, b_3^2)$ equals $\Phi_{19,A_1}$ up to a nonzero constant multiple.
\end{lemma}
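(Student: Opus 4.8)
The plan is to identify $J = J(m_2, b_1^2, b_2^2, b_3^2)$ by matching it against $\Phi_{19,A_1}$ using the structural constraints on Jacobians from Theorem \ref{th:j}, together with a finite Fourier coefficient computation. First I would record the weights: $m_2$ has weight $2$, while $b_1^2, b_2^2, b_3^2$ have weights $4, 4, 6$. Since $n = 3$ here, part (1) of Theorem \ref{th:j} tells us $J$ is a cusp form of weight $n + \sum k_i = 3 + 2 + 4 + 4 + 6 = 19$ for the group $\Gamma = \widetilde{\Orth}_r(L)$, carrying the determinant character. Thus $J \in S_{19}(\widetilde{\Orth}_r(L), \det)$, and $\Phi_{19,A_1}$ is a natural candidate for this space.

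Next I would pin down the divisor of $J$. By part (3) of Theorem \ref{th:j}, $J$ vanishes on the mirror $\mathcal{D}_r(L)$ of every reflection $\sigma_r \in \Gamma$; by definition $\Gamma$ is generated precisely by the reflections attached to the divisor of $\Phi_{19,A_1}$, so $\Phi_{19,A_1}$ divides $J$ (after checking multiplicities: $\Phi_{19,A_1}$ has simple zeros on exactly these mirrors by its product expansion / the explicit principal part $q^{-1}e_0$). The quotient $J / \Phi_{19,A_1}$ is then a holomorphic modular form of weight $0$ for $\Gamma$, hence constant, since weight-zero holomorphic modular forms on these domains are constant (Koecher principle plus connectedness). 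This reduces the lemma to showing $J \not\equiv 0$ and computing the constant, which amounts to comparing a single nonzero Fourier coefficient of $J$ with the corresponding coefficient of $\Phi_{19,A_1}$.

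To execute the coefficient comparison I would expand everything on the tube domain $\mathbb{H}_K$ for $K = U(2)\oplus A_1$ using the explicit additive-lift and Borcherds-product formulas from the theorems quoted above: the Fourier expansions of $b_1, b_2, b_3$ come from their principal parts via the product formula (part (3) of the Borcherds theorem), and $m_2$ is the additive lift of the unique weight-$3/2$ vector-valued form, whose expansion is given by the theta lift formula. Then $J$ is the determinant of the $4\times 4$ matrix $\begin{pmatrix} k_i F_i \\ \nabla F_i\end{pmatrix}$, which I would expand to the first few orders in $\mathbf{q}$. Comparing with the leading term of $\Phi_{19,A_1}$ (whose Weyl vector and leading monomial I can read off, since $L = U \oplus U(2) \oplus A_1$ is of the form $U(N_1)\oplus U(N_2)\oplus L_0$ so Theorem 10.4 of \cite{Bo} applies) determines the constant and in particular shows it is nonzero.

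The main obstacle I anticipate is the bookkeeping in the Jacobian determinant: one must expand $\nabla b_j$ for the three Borcherds products and $\nabla m_2$ to sufficient order, track the $A_1$-coordinate carefully, and verify that the low-order terms of the $4\times 4$ determinant do not all vanish (a genuine cancellation check, since $J$ could a priori be the zero form if $m_2, b_1^2, b_2^2, b_3^2$ were algebraically dependent — part (2) of Theorem \ref{th:j}). A clean way to sidestep part of this is to argue algebraic independence abstractly: $m_2$ is an additive lift and $b_1, b_2, b_3$ are Borcherds products with distinct divisors, so no polynomial relation among $m_2, b_1^2, b_2^2, b_3^2$ can hold (e.g. by examining divisors, or by noting that a relation would force a weight/divisor contradiction). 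Granting algebraic independence, $J \neq 0$ by part (2), and then the divisor argument plus the weight-zero constancy forces $J = c\,\Phi_{19,A_1}$; a single explicit Fourier coefficient then fixes $c \neq 0$ and completes the proof.
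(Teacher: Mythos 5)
Your proposal is correct and follows essentially the same route as the paper: Theorem \ref{th:j}(3) forces $J$ to vanish on all mirrors of reflections in $\widetilde\Orth_r(L)$, so $J/\Phi_{19,A_1}$ is a holomorphic form of weight zero, hence constant, and a direct Fourier coefficient computation shows the constant is nonzero. The only caution is that your proposed shortcut of deducing algebraic independence ``abstractly'' from the distinctness of divisors is not a valid general argument, so the explicit low-order expansion of the Jacobian (which is exactly what the paper does) should be regarded as the actual proof of nonvanishing rather than an optional refinement.
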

\begin{proof}
By Theorem \ref{th:j} (3), the function $J/\Phi_{19,A_1}$ defines a holomorphic modular form of weight zero and thus it is a constant. A direct calculation shows that the constant is nonzero.
\end{proof}

\begin{theorem}\label{th:U+U2+A1_d}
\noindent
\begin{itemize}
\item[(i)] $\widetilde \Orth_r(L)$ coincides with the discriminant kernel $\widetilde \Orth^+(L)$. 
\item[(ii)] The algebra of modular forms for the discriminant kernel is freely generated: 
$$M_*(\widetilde \Orth^+(L)) = \mathbb{C}[m_2, b_1^2, b_2^2, b_3^2].$$
\end{itemize}
\end{theorem}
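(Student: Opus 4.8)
The plan is to deduce both parts from the Jacobian criterion of Theorem \ref{th:j2} applied to the four forms $m_2, b_1^2, b_2^2, b_3^2$. The previous lemma already identifies their Jacobian with $\Phi_{19,A_1}$ up to a nonzero scalar, so the key points to verify are: (a) these four forms have trivial character on $\widetilde{\Orth}_r(L)$; (b) $\Phi_{19,A_1}$ vanishes exactly on the mirrors of reflections in $\widetilde{\Orth}_r(L)$, each with multiplicity one. Point (a) holds because $m_2$ is an additive lift (hence defined on all of $\widetilde{\Orth}^+(L)$ with trivial character), and because $b_1, b_2, b_3$ each carry a character of order at most two on $\widetilde{\Orth}_r(L)$ (as noted before the lemma, their divisors are simple and preserved by the relevant reflections), so their squares have trivial character. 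Point (b) is essentially the definition of $\widetilde{\Orth}_r(L)$: it is generated by the reflections attached to the components of $\div \Phi_{19,A_1}$, so I need to check that the input form of $\Phi_{19,A_1}$ (principal part $38 e_0 + q^{-1} e_0$) produces a divisor that is reduced, i.e. consists of rational quadratic divisors $\mathcal{D}_r(L)$ with $r \in L$ primitive of norm $2$ appearing with multiplicity one. This follows from the divisor formula in the Borcherds product theorem together with the fact that the only relevant Fourier coefficient is $c(-1,0)=1$ and $H(1,0)$ decomposes into $\widetilde{\Orth}^+(L)$-orbits of norm-$2$ vectors in $L$, each contributing multiplicity one; one must also confirm there is no extra vanishing from coincidences among the $H(1,0)$-components.

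Given (a) and (b), Theorem \ref{th:j2} immediately yields that $M_*(\widetilde{\Orth}_r(L))$ is freely generated by $m_2, b_1^2, b_2^2, b_3^2$, and moreover that $\widetilde{\Orth}_r(L)$ is generated by reflections whose mirrors lie in $\div \Phi_{19,A_1}$ — but by construction those are exactly all reflections in $\widetilde{\Orth}^+(L)$. So I still need part (i): that $\widetilde{\Orth}_r(L) = \widetilde{\Orth}^+(L)$, i.e. that the reflection subgroup is in fact the whole discriminant kernel. This is where a classical input is needed: one knows (e.g. from the theory of reflective modular forms, or from a direct argument that $\widetilde{\Orth}^+(L)/\widetilde{\Orth}_r(L)$ acts on a free algebra, forcing it to be generated by reflections and hence trivial by \cite{VP89}) that since $M_*(\widetilde{\Orth}_r(L))$ is a free (polynomial) algebra, any group $\Gamma$ with $\widetilde{\Orth}_r(L) \le \Gamma \le \widetilde{\Orth}^+(L)$ acts on this polynomial ring, and if $M_*(\Gamma)$ were also free then $\Gamma$ would be a reflection group; since the reflections of $\widetilde{\Orth}^+(L)$ already all lie in $\widetilde{\Orth}_r(L)$, this forces $\Gamma = \widetilde{\Orth}_r(L)$, and taking $\Gamma = \widetilde{\Orth}^+(L)$ gives (i). Concretely, it suffices to observe that the mirrors appearing in $\div \Phi_{19,A_1}$ already account for all reflections in $\widetilde{\Orth}^+(L)$ — every primitive $r \in L$ with $\langle r,r\rangle = 2$ has $r \in L+0$ and $Q(r)=1$, so $\mathcal{D}_r(L) \subseteq H(1,0) = \div \Phi_{19,A_1}$ — and that a group generated by all reflections it contains, sitting inside a group whose full modular algebra is polynomial, must be the whole group.

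The main obstacle will be the bookkeeping in (b): confirming that the divisor of $\Phi_{19,A_1}$ is genuinely reduced and matches exactly the reflective divisor, with no hidden multiplicity-two components coming from the inclusions $H(n^2 m, n\beta) \subseteq H(m,\beta)$ or from several norm-$2$ vectors sharing a mirror. For weight $19 = c(0,0)/2 = 38/2$ with principal part supported only at the trivial coset with $m=1$, the only contributing coefficient is $c(-1,0) = 1$, so the divisor is $\sum_{r} \mathcal{D}_r(L)$ over $r \in L$ primitive with $Q(r) = 1$ (i.e. $\langle r,r\rangle = 2$), each with multiplicity one, which is precisely the reflective divisor of $\widetilde{\Orth}^+(L)$; any apparent overcounting is handled because distinct primitive $r$ (up to sign) give distinct mirrors. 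I would also double-check the weight consistency — $n = 3$, so $J$ has weight $n + \sum k_i = 3 + 2 + 4 + 4 + 6 = 19$, matching $\Phi_{19,A_1}$ — and that $\Phi_{19,A_1}$ is indeed a cusp form (its principal-part has $c(0,0) = 38 \ne 0$ but that is the constant term of the input form, not of the product; cuspidality follows from part (1) of Theorem \ref{th:j}). The remaining steps are routine.
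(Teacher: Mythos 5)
Your derivation of $M_*(\widetilde\Orth_r(L)) = \mathbb{C}[m_2, b_1^2, b_2^2, b_3^2]$ via Theorem \ref{th:j2} is correct and is exactly the paper's first step: the characters of $b_i^2$ are trivial on $\widetilde\Orth_r(L)$ because the $b_i$ have order-two characters, the Jacobian is $\Phi_{19,A_1}$ by the preceding lemma, and its divisor $H(1,0)$ is reduced and coincides with the union of mirrors of reflections in the discriminant kernel.

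The gap is in part (i), i.e.\ in passing from $\widetilde\Orth_r(L)$ to $\widetilde\Orth^+(L)$. Your argument runs: ``if $M_*(\Gamma)$ were also free then $\Gamma$ would be a reflection group,'' and similarly ``sitting inside a group whose full modular algebra is polynomial.'' But the freeness of $M_*(\widetilde\Orth^+(L))$ is precisely what is to be proven; nothing you have established rules out that the quotient $\widetilde\Orth^+(L)/\widetilde\Orth_r(L)$ is nontrivial and acts on the generators by nontrivial characters (e.g.\ sending some $b_i^2$ to $-b_i^2$), in which case $M_*(\widetilde\Orth^+(L))$ would be a proper subalgebra of $\mathbb{C}[m_2,b_1^2,b_2^2,b_3^2]$ and the conclusion would fail. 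The appeal to \cite{VP89} only becomes available \emph{after} one knows $M_*(\widetilde\Orth^+(L))$ is free, so as written the argument is circular. The missing step, which is how the paper closes this loop, is to show that the generators are genuinely modular without character under all of $\widetilde\Orth^+(L)$: the weight-$4$ and weight-$6$ graded pieces of $\mathbb{C}[m_2,b_1^2,b_2^2,b_3^2]$ have dimensions $3$ and $4$, equal to $\dim M_{7/2}(\rho)$ and $\dim M_{11/2}(\rho)$; since $L$ splits a unimodular $U$ the additive lift is injective, so every form of weight $4$ or $6$ on $\widetilde\Orth_r(L)$ — in particular each $b_i^2$ — is an additive lift and hence invariant under the full discriminant kernel. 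Only then does $M_*(\widetilde\Orth^+(L)) = M_*(\widetilde\Orth_r(L))$ follow, and only then does \cite{VP89} force $\widetilde\Orth^+(L)$ to be generated by its reflections, i.e.\ to equal $\widetilde\Orth_r(L)$. You should insert this dimension/Maass-space comparison (or some substitute establishing the invariance of the $b_i^2$ under $\widetilde\Orth^+(L)$) to complete the proof.
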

\begin{proof} Applying the Jacobian criterion shows that $$M_*(\widetilde \Orth_r(L)) = \mathbb{C}[m_2, b_1^2, b_2^2, b_3^2].$$ Since the additive lifts of weight $4$ and $6$ are contained in $M_*(\widetilde \Orth_r(L))$, comparing dimensions shows that $M_k(\widetilde \Orth_r(L)) = \mathrm{Maass}_k(\widetilde \Orth_r(L))$ for $k \in \{4, 6\}$. In particular, $b_1^2, b_2^2, b_3^2$ are additive lifts and are therefore modular under the entire discriminant kernel without character.
\end{proof}

\begin{remark} The product $b_4$ is modular without character on the discriminant kernel. By comparing Fourier expansions one finds $b_4 = b_1^2 - b_2^2 = (b_1 - b_2) (b_1 + b_2)$. Despite this decomposition, the divisor of $b_4$ is irreducible (because $b_1$ and $b_2$ do not have the same character, so their sum and difference are not modular forms on the discriminant kernel for any character).
\end{remark}

\begin{remark} Using Theorem \ref{th:U+U2+A1_d} and comparing the first coefficients in Fourier series, one can show that up to multiples the (unique) cuspidal  lift of weight $6$ is $b_3^2$; in particular this is simultaneously an additive and a multiplicative lift.
\end{remark}

The ring of modular forms for $\Orth^+(L)$ can be computed by a similar argument. All reflections in $\Orth^+(L)$ are associated to the divisor of $b_4 \Phi_{19}$, so this is the prospective Jacobian of the generators.

\begin{theorem}\label{th:U+U(2)+A1}
\noindent
\begin{itemize} 
\item[(i)] $\Orth^+(L) = \Orth_r(L)$ is generated by reflections. 
\item[(ii)] The algebra of modular forms for $\Orth^+(L)$ is freely generated: $$M_*(\Orth^+(L)) = \mathbb{C}[m_2, \mathcal{E}_4, \mathcal{E}_6, \mathcal{E}_8],$$ where $\mathcal{E}_k$ is the Eisenstein series of weight $k$. The Jacobian of the generators equals $b_4 \Phi_{19}$ up to a constant multiple.
\end{itemize}
\end{theorem}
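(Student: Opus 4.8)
The plan is to parallel the proof of Theorem~\ref{th:U+U2+A1_d}, now working with the full orthogonal group $\Orth^+(L)$ instead of the discriminant kernel. First I would identify the reflections in $\Orth^+(L)$: by the criterion recalled before Theorem~\ref{th:j}, a primitive $r \in L'$ gives a reflection $\sigma_r \in \Orth^+(L)$ when $\langle r,r\rangle = 2/d$ and $\ord(r) \in \{d, d/2\}$. For $L = U \oplus U(2) \oplus A_1$ the relevant norms are $2$ (vectors in $L$, giving the mirrors of $\Phi_{19,A_1}$), $1$ (from the $A_1$ summand and the $U(2)$ summand, of discriminant order $2$), and $1/2$; one checks against the discriminant form $A \cong (\ZZ/2)^2 \oplus A_1'/A_1$ which classes actually occur. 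The upshot should be that the mirrors of reflections in $\Orth^+(L)$ are exactly the divisors $H(1/4,\beta)$ occurring in $b_1,b_2,b_3$ together with the $H(1/2,\cdot)$ divisor of $b_4$ and the norm-$1$ divisor $H(1,0)$ of $\Phi_{19,A_1}$ — i.e. precisely the divisor of $b_4 \Phi_{19,A_1}$, which explains the displayed ``prospective Jacobian.''

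Next I would produce the candidate generators. The Eisenstein series $\mathcal{E}_4, \mathcal{E}_6, \mathcal{E}_8$ are defined as the additive lifts of the vector-valued Eisenstein series in the appropriate weights, hence are $\Orth^+(L)$-modular without character; together with $m_2$ (the weight-$2$ additive lift, which is $\Orth^+(L)$-modular since $\dim M_{3/2}(\rho) = 1$ forces the unique input form to be $\Orth^+(L)$-invariant, or alternatively since $\dim \mathrm{Maass}_2 = 1$ matches the invariant subspace) this gives four forms with trivial character of weights $2,4,6,8$. I would then compute the Jacobian $J = J(m_2, \mathcal{E}_4, \mathcal{E}_6, \mathcal{E}_8)$. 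By Theorem~\ref{th:j}(1),(3) it is a cusp form of weight $2 + (2+4+6+8) = 22$ vanishing on every mirror of a reflection in $\Orth^+(L)$, hence divisible by $b_4 \Phi_{19,A_1}$ (also of weight $4 + 18 = 22$). Therefore $J / (b_4 \Phi_{19,A_1})$ is a holomorphic modular form of weight $0$, hence constant, and a direct Fourier-coefficient check shows the constant is nonzero; so $J$ equals $b_4\Phi_{19,A_1}$ up to scaling. Since $b_4$ and $\Phi_{19,A_1}$ each vanish to order one on their respective (disjoint) reducible divisors, $J$ vanishes with multiplicity one exactly on the mirrors of reflections in $\Orth^+(L)$. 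Theorem~\ref{th:j2} then yields that $M_*(\Orth_r(L)) = \mathbb{C}[m_2, \mathcal{E}_4, \mathcal{E}_6, \mathcal{E}_8]$ and that $\Orth_r(L)$ is generated by the reflections whose mirrors lie in $\div J$.

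It remains to show $\Orth_r(L) = \Orth^+(L)$, i.e. part~(i). Here I would argue by dimension/index: the inclusion $M_*(\Orth^+(L)) \subseteq M_*(\Orth_r(L))$ is automatic, and since the latter is the free algebra with Hilbert series $\prod (1-t^{k_i})^{-1}$ for $k = 2,4,6,8$, it suffices to exhibit enough $\Orth^+(L)$-invariant forms to fill it out — but in fact the four generators are already $\Orth^+(L)$-invariant (additive lifts of $\Orth^+(L)$-invariant vector-valued forms, using that the relevant $M_{k+1-n/2}(\rho)$ decompose with the invariant Eisenstein part accounting for everything, exactly as in the proof of Theorem~\ref{th:U+U2+A1_d} where $b_i^2$ were shown to be lifts). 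Hence $M_*(\Orth^+(L)) = M_*(\Orth_r(L))$, and two finite-index subgroups of $\Orth^+(L)$ with the same invariant ring must coincide (a modular form argument, or: the normalization of $\Proj$ of the two rings agree, forcing equal quotients), so $\Orth^+(L) = \Orth_r(L)$.

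The main obstacle I anticipate is the reflection bookkeeping in the first step: one must be careful to enumerate \emph{all} $\Orth^+(L)$-reflective classes of vectors in $L'$ — including the half-integral norm $1/2$ possibility and checking orders in the discriminant group $L'/L$ — and confirm that none of them contributes a mirror outside $\div(b_4\Phi_{19,A_1})$; an overlooked class would break the ``vanishes \emph{exactly}'' hypothesis of Theorem~\ref{th:j2}. The comparison-of-Fourier-coefficients steps (nonvanishing of the Jacobian constant, and that $b_4$ and $\Phi_{19,A_1}$ are coprime reduced divisors) are routine but should be spelled out. Establishing part~(i) is comparatively soft once the $\Orth^+(L)$-invariance of the generators is in hand.
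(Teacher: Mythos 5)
Your outline follows the paper's own proof: establish that $m_2,\mathcal{E}_4,\mathcal{E}_6,\mathcal{E}_8$ are $\Orth^+(L)$-modular without character, show the Jacobian is a nonzero multiple of $b_4\Phi_{19}$ by comparing weights, and invoke Theorem \ref{th:j2}. Three points need repair. First, your weight arithmetic is off in both places: the Jacobian of $n+1=4$ forms has weight $n+\sum k_i = 3+20 = 23$ (not $2+20$), and $b_4\Phi_{19}$ has weight $4+19=23$ (not $4+18$); the two errors happen to cancel, so the conclusion that the quotient has weight zero survives, but as written both numbers are wrong. Second, the claim that $\dim M_{3/2}(\rho)=1$ ``forces the unique input form to be $\Orth^+(L)$-invariant'' is a genuine logical gap: a one-dimensional invariant subspace only gives an action by a character. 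The paper closes this by noting that the input form has rational Fourier coefficients (so the character has order at most two) and nonzero $e_0$-component (which is fixed by $\Orth(L'/L)$, forcing the character to be trivial); you need some such supplement. Third, your final step --- deducing $\Orth_r(L)=\Orth^+(L)$ from equality of invariant rings of two finite-index subgroups --- is an unnecessary and not-obviously-justified detour: once the four generators are known to be modular with trivial character on all of $\Orth^+(L)$ and the Jacobian vanishes with multiplicity one exactly on the mirrors of reflections in $\Orth^+(L)$, Theorem \ref{th:j2} applies directly with $\Gamma=\Orth^+(L)$ and yields both the freeness of $M_*(\Orth^+(L))$ and the fact that $\Orth^+(L)$ is generated by reflections in one stroke. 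Your attention to the reflection bookkeeping (in particular that the $H(1/4,\beta)$ mirrors of $b_1,b_2,b_3$ are already contained in the $H(1,0)$ divisor of $\Phi_{19}$, so that $b_4\Phi_{19}$ really does carry every mirror with multiplicity one) is a worthwhile addition that the paper only asserts.
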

\begin{proof} By construction the Eisenstein series $\mathcal{E}_k$ are modular without character on $\Orth^+(L)$. The form $m_2$ is the additive theta lift of the weight $3/2$ modular form \begin{align*} f(\tau) &= (1 + 6q + 12q^2 + 8q^3 + ...) (e_0 - e_{(0,0,0,1/2,0)} - e_{(0,1/2,0,0,0)}) \\ &+ (8 q^{3/4} + 24q^{11/4} + ...) (e_{(0,0,1/2,0,0)} - e_{(0,0,1/2,1/2,0)} - e_{(0,1/2,1/2,0,0)}) \\ &+ (-12q^{1/2} - 24q^{3/2} - 24q^{5/2} - ...) e_{(0,1/2,0,1/2,0)} \\ &+ (-6q^{1/4} - 24q^{5/4} - 30q^{9/4} - ...) e_{(0,1/2,1/2,1/2,0)},\end{align*} which is invariant under all automorphisms of $(L'/L, Q)$, so $m_2$ is also modular without character on $\Orth^+(L)$. (Another way to see this is as follows. Since $f$ has rational Fourier coefficients and $\dim M_{3/2}(\rho)=1$, it is invariant under $\Orth(L'/L)$ up to a character of order at most two. This character must be trivial because the $e_0$ component of $f$ is nonzero.)  By expressing these generators in terms of those of Theorem \ref{th:U+U2+A1_d} (or by computing their Jacobian directly) one sees that they are algebraically independent. Their Jacobian $J$ is nonzero and divisible by $J_0 := b_4 \Phi_{19}$ by Theorem \ref{th:j} and both $J$ and $J_0$ have weight $23$, so they are equal up to a nonzero constant multiple.
\end{proof}

\begin{remark}
One can also prove Theorem \ref{th:U+U(2)+A1} more indirectly using the following argument. Since $\dim M_{3/2}(\rho)$ is one-dimensional, $m_2$ is modular on $\Orth^+(L)$ with a character of some (finite) order $a$. If $m_2$ had a nontrivial character on the reflection group $\Orth_r(L)$ then it would have a zero on a mirror of some reflection and be divisible by one of the products in Table \ref{tab:U+U2+A1}, violating Koecher's principle. Then the Jacobian criterion implies $$M_*(\Orth_r(L)) = \CC[m_2, \mathcal{E}_4, \mathcal{E}_6, \mathcal{E}_8],$$ and therefore $$M_*(\Orth^+(L)) = \mathbb{C}[m_2^a, \mathcal{E}_4, \mathcal{E}_6, \mathcal{E}_8].$$ But if $M_*(\Orth^+(L))$ is free, then $\Orth^+(L) = \Orth_r(L)$ must be generated by reflections (and in particular $a = 1$). We will use a similar argument in some other cases.
\end{remark}

\subsection{Modular forms on \texorpdfstring{$U\oplus U(2)\oplus A_2$}{} }

We will compute the graded rings of modular forms for the discriminant kernel and for the full integral orthogonal group of the lattice $L = U \oplus U(2) \oplus A_2$.

The Hilbert series of dimensions for the Weil representation $\rho$ associated to this lattice is $$\sum_{k=0}^{\infty} \mathrm{dim}\, M_{k + 1}(\rho) t^k = \frac{1 + 3t^2 + t^3 + 3t^4 + t^5 + t^6 + 2t^7}{(1 - t^4)(1 - t^6)}.$$ In particular, $\mathrm{dim}\, M_k(\rho) = 0, 1, 0, 3, 1, 4, 1$ for $k = 0, 1, 2, 3, 4, 5, 6$. For any $k \ge 3$ the spaces of cusp forms satisfy $$\mathrm{dim}\, S_k(\rho) = \begin{cases} \mathrm{dim}\, M_k(\rho) - 3: & k \; \text{odd}; \\ M_k(\rho): & k \; \text{even}; \end{cases}$$ as one can see by counting the number of Eisenstein series. In particular there are unique cusp forms of weights $4$ and $5$. We denote by $m_{2, A_2}$ the additive lift of the weight $1$ modular form and we let $m_{5, A_2}$ and $m_{6, A_2}$ denote the lifts of the weight $4$ and $5$ cusp forms. 

We will use Borcherds products whose input forms' principal parts with respect to the Gram matrix $$\begin{psmallmatrix} 0 & 0 & 0 & 0 & 0 & 1 \\ 0 & 0 & 0 &  0 & 2 & 0 \\ 0 & 0 & 2 & -1 & 0 & 0 \\ 0 & 0 & -1 & 2 & 0 & 0 \\ 0 & 2 & 0 & 0 & 0 & 0\\ 1 & 0 & 0 & 0 & 0 & 0\end{psmallmatrix}$$ are given in Table \ref{tab:U+U(2)+A2} below:

\begin{table}[htbp]
\centering
\caption{Some Borcherds products for $U \oplus U(2) \oplus A_2$} \label{tab:U+U(2)+A2}
\begin{tabular}{l*{3}{c}r}
\hline
Name & Weight & Principal part\\
\hline
$b_{1, A_2}$ & $4$ & $8 e_0 + q^{-1/2} e_{(0, 1/2, 0, 0, 1/2, 0)}$ \\
$b_{2, A_2}$ & $4$ & $8 e_0 + q^{-1/3} e_{(0, 1/2, 2/3, 1/3, 0, 0)} + q^{-1/3} e_{(0, 1/2, 1/3, 2/3, 0, 0)}$ \\
$b_{3, A_2}$ & $4$ & $8 e_0 + q^{-1/3} e_{(0, 0, 1/3, 2/3, 1/2, 0)} + q^{-1/3} e_{(0, 0, 2/3, 1/3, 1/2, 0)}$ \\
$b_{4, A_2}$ & $5$ & $10 e_0 + q^{-1/3} e_{(0, 0, 1/3, 2/3, 0, 0)} + q^{-1/3} e_{(0, 0, 2/3, 1/3, 0, 0)}$ \\
$\Phi_{25, A_2}$ & $25$ & $50 e_0 + q^{-1} e_0$ \\
\hline
\end{tabular}
\end{table}

As before, we often abbreviate $m_{j, A_2}$ and $b_{j, A_2}$ simply by $m_j$, $b_j$.

\begin{lemma} The forms $m_2, b_1, b_2, b_4, m_6$ are algebraically independent.
\end{lemma}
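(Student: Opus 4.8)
The statement to be proved is that the five modular forms $m_2, b_1, b_2, b_4, m_6$ on $L = U\oplus U(2)\oplus A_2$ are algebraically independent. Since $\dim \mathcal{D}(L) = 5$, algebraic independence of five modular forms is equivalent to the nonvanishing of their modular Jacobian $J := J(m_2, b_1, b_2, b_4, m_6)$, by part (2) of Theorem \ref{th:j}. So the plan is to show $J \neq 0$.

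The approach is to compute $J$ and identify it (up to a nonzero scalar) with a product of known Borcherds forms. By part (1) of Theorem \ref{th:j}, $J$ is a cusp form of weight $5 + (2 + 4 + 4 + 5 + 6) = 26$ with the determinant character. By part (3) of Theorem \ref{th:j}, $J$ must vanish on the mirror of every reflection contained in $\Gamma := \Orth^+(L)$ (or in whatever reflection subgroup the generators are modular for). The divisors of the Borcherds products in Table \ref{tab:U+U(2)+A2} account for the orbits of reflective mirrors: $b_{1,A_2}$ vanishes on a norm-$1$ (order-$2$) orbit, $\Phi_{25,A_2}$ vanishes on the norm-$2$ mirrors (the discriminant-kernel reflections), and $b_{2,A_2}, b_{3,A_2}$ vanish on the order-$3$ orbits. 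The natural guess is therefore $J_0 := b_{1,A_2}\,\Phi_{25,A_2}$, which has weight $4 + 25 = 29$ — too big — or rather one should count carefully which mirrors actually lie in the divisor of $J$ and match weights. Given the weight-$26$ target, the candidate is likely $J_0 = \Phi_{25,A_2}\cdot(\text{weight-}1\text{ factor})$, which is impossible, so more plausibly $J$ is a product involving $b_{1,A_2}$ (weight $4$), $b_{2,A_2}$ (weight $4$), $b_{3,A_2}$ (weight $4$) with the divisor structure forcing the combination; alternatively one shows $J/\Phi_{25,A_2}$ is a holomorphic form of weight $1$ and identifies it. I would first nail down exactly which reflections in $\Gamma$ have mirrors in $\mathrm{div}(J)$ — this determines a candidate product $J_0$ — then observe $J/J_0$ is a holomorphic modular form of weight $0$, hence constant, and finally check the constant is nonzero.

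The last step, nonvanishing of the constant, is the step I expect to be the main obstacle, and it is where an honest computation is unavoidable. One reduces it to a Fourier-coefficient calculation: expand $m_2, b_1, b_2, b_4, m_6$ as power series in $\mathbf{q}^\lambda$ on the tube domain $\mathbb{H}_K$ with $K = U(2)\oplus A_2$ to low order — the additive lifts $m_2, m_6$ via the explicit theta-lift formula (their input forms being the unique vector-valued forms of weights $1$ and $5$), and the Borcherds products $b_1, b_2, b_4$ via the product expansion in a fixed Weyl chamber, using the Weyl vectors computed from Theorem 10.4 of \cite{Bo} since $L$ is of the form $U\oplus U(2)\oplus A_2$. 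Plugging these into the $5\times 5$ determinant and extracting the leading nonzero term shows $J \neq 0$. In practice it is cleaner to argue as in the lemmas for $U\oplus U(2)\oplus A_1$: show $J$ is divisible by the relevant reflective products by Theorem \ref{th:j}(3), compare weights to see the quotient is a constant, and verify that constant is nonzero by a single coefficient comparison. The only real content is bookkeeping of the reflective divisors and one explicit numerical check, so I would not grind through it in the write-up but simply assert "a direct calculation shows the constant is nonzero," mirroring the style already used in the excerpt.
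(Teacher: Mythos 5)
Your route is genuinely different from the paper's: you propose to prove algebraic independence by showing the Jacobian $J=J(m_2,b_1,b_2,b_4,m_6)$ is nonzero directly, identifying it with a reflective Borcherds product and checking one Fourier coefficient. That strategy is logically sound (Theorem \ref{th:j}(2) does reduce the lemma to $J\neq 0$), but two things go wrong in the execution. First, an arithmetic slip: $L=U\oplus U(2)\oplus A_2$ has signature $(4,2)$, so $\dim\mathcal D(L)=n=4$, not $5$, and the Jacobian of the five forms has weight $n+\sum k_i=4+21=25$, not $26$. With the correct weight the identification you are groping for is immediate: the five forms are modular (at worst with character) for $\widetilde\Orth_r(L)$, whose mirrors are exactly the divisor of $\Phi_{25,A_2}$, so $J/\Phi_{25,A_2}$ is holomorphic of weight $0$, hence constant; your detour through $b_{1,A_2}\Phi_{25,A_2}$ and the order-$3$ orbits (which belong to reflections of $\Orth^+(L)$ not contained in $\widetilde\Orth_r(L)$) is a symptom of the wrong weight. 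Second, and more substantively, reducing to ``the constant is nonzero'' is just a restatement of $J\neq 0$: the entire content of the lemma is that single verification, which you leave as an unexecuted computation requiring the Fourier expansions of all five forms on the rank-$6$ lattice.

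The paper avoids that computation entirely by restricting to the divisor of $b_4$, which is the modular variety of $U\oplus U(2)\oplus A_1$, where the ring is already known to be $\mathbb C[m_{2,A_1},b_{1,A_1}^2,b_{2,A_1}^2,b_{3,A_1}^2]$. One checks that the pullback $P$ is injective in weights $\le 4$ and in weight $6$ (the latter because the smaller lattice carries no weight-$1$ forms), that $P(m_2)$, $P(b_1)$, $P(b_2)$, $P(m_6)$ are algebraically independent there, and that $b_4$ vanishes to first order along the divisor; algebraic independence of all five follows formally. If you want to keep your direct approach, fix the weight count, restrict attention to $\widetilde\Orth_r(L)$ so that $J_0=\Phi_{25,A_2}$, and then actually carry out the coefficient computation --- or better, adopt the pullback argument, which turns the lemma into pure bookkeeping against results already established for $U\oplus U(2)\oplus A_1$.
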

\begin{proof} The modular variety attached to $U \oplus U(2) \oplus A_1$ embeds as the divisor of $b_{4, A_2}$. In particular, the pullback $$P : M_*(\widetilde \Orth_r(L)) \longrightarrow M_*(\widetilde \Orth_r(U \oplus U(2) \oplus A_1))$$ is injective in weights at most $4$. The pullback is also injective in weight $6$ because $\widetilde \Orth_r(U \oplus U(2) \oplus A_1)$ admits no modular forms of weight $1$, and therefore (by injectivity) $\widetilde \Orth_r(L)$ admits no modular forms of weight $1$; and therefore no modular forms of weight $6$ that are multiples of $b_{4, A_2}$. It follows that $P(m_{2, A_2})$ is nonzero; that $P(b_{1, A_2})$ and $P(b_{2, A_2})$ span $\mathbb{C}[b_{1, A_1}^2, b_{2, A_1}^2]$ and that $P(m_{6, A_2})$ is linearly independent from $m_{2, A_1}^3, m_{2, A_1} b_{1, A_1}^2, m_{2, A_1} b_{2, A_1}^2$; and therefore that $P(m_{2, A_2})$, $P(b_{1, A_2})$, $P(b_{2, A_2})$ and $P(m_{6, A_2})$ are algebraically independent. Since $P(b_{4, A_2}) = 0$ we conclude that $m_2, b_1, b_2, b_4, m_6$ are algebraically independent. 
\end{proof}

\begin{remark} With a little more effort one can determine exact expressions for the pullbacks: we find $P(m_{2, A_2}) = m_{2, A_1}$, $P(b_{1, A_2}) = b_{4, A_1}$, $P(b_{2, A_2}) = b_{2, A_1}^2$, $P(b_{4, A_2}) = 0$, $P(m_{6, A_2}) = b_{3, A_1}^2$.
\end{remark}

\begin{theorem}\label{th:U+U2+A2_d} The graded ring of modular forms for the discriminant kernel of $U \oplus U(2) \oplus A_2$ is freely generated in weights $2, 4, 4, 5, 6$: $$M_*(\widetilde \Orth^+(L)) = \mathbb{C}[m_2, b_1, b_2, b_4, m_6].$$
\end{theorem}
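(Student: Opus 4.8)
The plan is to verify the hypotheses of the Jacobian criterion (Theorem~\ref{th:j2}), first for the subgroup $\widetilde{\Orth}_r(L)$ generated by all reflections in the discriminant kernel and then to pass to $\widetilde{\Orth}^+(L)$ itself by a dimension count. Since $L=U\oplus U(2)\oplus A_2$ has signature $(4,2)$ we have $n=4$ and need $n+1=5$ generators; the candidates $m_2,b_1,b_2,b_4,m_6$ have weights $2,4,4,5,6$, so the Jacobian $J:=J(m_2,b_1,b_2,b_4,m_6)$ has weight $n+(2+4+4+5+6)=25$, and it is nonzero by Theorem~\ref{th:j}(2) thanks to the preceding lemma. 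The key observation is that $\div\Phi_{25,A_2}=H(1,0)$ (from the principal part $50e_0+q^{-1}e_0$), which by the description of reflections recalled just before Theorem~\ref{th:j} is precisely the reduced union of the mirrors of all reflections in $\widetilde{\Orth}^+(L)$, i.e.\ of all reflections generating $\widetilde{\Orth}_r(L)$. Hence Theorem~\ref{th:j}(3) shows $J$ vanishes on $H(1,0)$, so $J/\Phi_{25,A_2}$ is a holomorphic modular form of weight $0$ for some finite-order character and therefore a nonzero constant; in particular $\div J=H(1,0)$ with multiplicity one.

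Next I would check that the five generators have trivial character on $\widetilde{\Orth}_r(L)$, as demanded by Theorem~\ref{th:j2}. The additive theta lifts $m_2$ and $m_6$ are modular without character already on the whole discriminant kernel. The Borcherds products $b_1,b_2,b_4$ have divisors supported on the Heegner divisors $H(1/2,\cdot)$, $H(1/3,\cdot)$, $H(1/3,\cdot)$ listed in Table~\ref{tab:U+U(2)+A2}, and none of these shares an irreducible component with $H(1,0)$, since the norms $1,\tfrac23,\tfrac23$ of the relevant vectors are not in the square class of $2$; thus each $b_i$ is nonvanishing on every mirror of a reflection in $\widetilde{\Orth}_r(L)$, and restricting its transformation law to such a mirror (where the automorphy factor of the reflection equals $1$) shows that its character is trivial on every such reflection, hence on all of $\widetilde{\Orth}_r(L)$. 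With the characters under control, Theorem~\ref{th:j2} gives $M_*(\widetilde{\Orth}_r(L))=\CC[m_2,b_1,b_2,b_4,m_6]$.

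It remains to upgrade this to $\widetilde{\Orth}^+(L)$. Because $L$ contains a unimodular hyperbolic plane and $U(2)\oplus A_2$ is isotropic, the additive lift on holomorphic forms is injective, so $\dim\mathrm{Maass}_k(\widetilde{\Orth}^+(L))=\dim M_{k-1}(\rho)$, which equals $3$ for $k=4$ and $1$ for $k=5$. On the other hand the weight $4$ and weight $5$ pieces of the free algebra $\CC[m_2,b_1,b_2,b_4,m_6]$ have dimensions $3$ (spanned by $m_2^2,b_1,b_2$) and $1$ (spanned by $b_4$), and $M_k(\widetilde{\Orth}^+(L))\subseteq M_k(\widetilde{\Orth}_r(L))$; comparing dimensions forces equality in weights $4$ and $5$, whence $b_1,b_2,b_4$ are additive lifts and so modular without character on the full discriminant kernel. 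Together with $m_2,m_6$ this gives the chain $\CC[m_2,b_1,b_2,b_4,m_6]\subseteq M_*(\widetilde{\Orth}^+(L))\subseteq M_*(\widetilde{\Orth}_r(L))=\CC[m_2,b_1,b_2,b_4,m_6]$, forcing equality throughout (and, incidentally, $\widetilde{\Orth}_r(L)=\widetilde{\Orth}^+(L)$). The step I expect to require the most care is precisely this bookkeeping of characters: the products $b_1,b_2,b_4$ are a priori only projectively modular under the discriminant kernel, and it is the Maass-space dimension count that certifies they descend without a character; by contrast, once algebraic independence is known the identification $J=c\,\Phi_{25,A_2}$ is immediate.
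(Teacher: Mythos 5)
Your proof is correct and follows essentially the same route as the paper: identify the Jacobian with $\Phi_{25,A_2}$ via the weight count and Koecher's principle, apply the Jacobian criterion to $\widetilde{\Orth}_r(L)$, and then use the Maass-space dimension count in weights $4$ and $5$ to promote $b_1,b_2,b_4$ to additive lifts, hence forms without character on the full discriminant kernel. Your explicit verification that the Borcherds products have trivial character on $\widetilde{\Orth}_r(L)$ (via non-vanishing on the mirrors, since the relevant norms lie in different square classes) is a detail the paper leaves implicit, but it does not alter the argument.
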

\begin{proof} The Jacobian $J = J(m_2, b_1, b_2, b_4, m_6)$ is nonzero (by the previous lemma), has weight $25$, and vanishes on all mirrors associated to $\Phi_{25, A_2}$ by Theorem \ref{th:j}.  In particular it equals $\Phi_{25, A_2}$ up to a nonzero constant multiple. By Theorem \ref{th:j2} we find $$M_*(\widetilde \Orth_r(U \oplus U(2) \oplus A_2)) = \mathbb{C}[m_2, b_1, b_2, b_4, m_6].$$ Comparing dimensions shows that all modular forms for $\widetilde \Orth_r(U \oplus U(2) \oplus A_2)$ of weights at most $6$ are additive lifts, and are therefore modular under the full discriminant kernel; so we conclude that $\widetilde \Orth_r(U \oplus U(2) \oplus A_2) = \widetilde \Orth^+(U \oplus U(2) \oplus A_2).$
\end{proof}

\begin{remark} From the structure theorem it follows that there is a linear relation among the weight four products. It also follows that the additive lift $m_5$ of weight $5$ equals the Borcherds product $b_4$ (up to a multiple).
\end{remark}

\begin{remark} The discriminant form $L'/L$ contains three isotropic vectors which we label $0, v_1, v_2$. These yield three distinct vector-valued Eisenstein series $E_0, E_{v_1}, E_{v_2}$ of weight three, which can be lifted to orthogonal Eisenstein series $e_0, e_{v_1}, e_{v_2}$ respectively. By computing the first Fourier coefficients one can show that (appropriately ordered and normalized) the weight four Borcherds products are $$b_1 = e_{v_1} - e_{v_2}, \; \; b_2 = e_{v_1}, \; \; b_3 = e_{v_2},$$ and moreover that $m_2^2$ is a constant multiple of $5 e_0 + e_{v_1} + e_{v_2}$.
\end{remark}

The action of $\Orth^+(L)$ on the Borcherds products can be computed using their descriptions in terms of Eisenstein series. In particular, $b_2 + b_3$ and $b_2^2 + b_3^2$ are modular with trivial character under $\Orth^+(L)$. Moreover, the forms $m_2$, $b_4$ and $m_6$ are additive lifts of the unique modular or cusp form of the appropriate weight with rational coefficients, and are therefore modular under the larger $\Orth^+(L)$ with a character of order at most two. Thus $b_4^2$ is modular under $\Orth^+(L)$ without character. Besides, the $e_0$ components of the inputs of $m_2$ and $m_6$ are nonzero. It follows that $m_2$ and $m_6$ are also modular on $\Orth^+(L)$ without character.

\begin{theorem} The graded ring of modular forms for the integral orthogonal group of $U \oplus U(2) \oplus A_2$ is freely generated in weights $2, 4, 6, 8, 10$: $$M_*(\Orth^+(L)) = \mathbb{C}[m_2, b_2 + b_3, m_6, b_2^2 + b_3^2, b_4^2].$$ The Jacobian of the generators is a nonzero constant multiple of $b_1 b_4 \Phi_{25}$.
\end{theorem}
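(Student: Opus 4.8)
The plan is to apply the Jacobian criterion (Theorem \ref{th:j2}) to the five proposed generators $m_2$, $b_2+b_3$, $m_6$, $b_2^2+b_3^2$, $b_4^2$, all of which are modular under $\Orth^+(L)$ with trivial character by the discussion immediately preceding the statement. First I would verify algebraic independence: this follows from the previous theorem's structure result $M_*(\widetilde\Orth^+(L)) = \mathbb{C}[m_2,b_1,b_2,b_4,m_6]$, since expressing each of the five forms as a polynomial in $m_2,b_1,b_2,b_4,m_6$ and computing the Jacobian of that substitution (or, more cheaply, observing via the Eisenstein-series descriptions $b_2 = e_{v_1}$, $b_3 = e_{v_2}$, $b_1 = e_{v_1}-e_{v_2}$ that $b_2+b_3$ and $b_2^2+b_3^2$ together recover the symmetric functions of $b_2,b_3$, hence $b_1^2$, while $b_4^2$ recovers $b_4$ up to sign and $m_2,m_6$ are already present) shows the five generate an algebra of the same transcendence degree $5 = n+1$. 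Hence their Jacobian $J$ is a nonzero cusp form.

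Next I would compute the weight of $J$: by Theorem \ref{th:j}(1) it is $n + \sum k_i = 4 + (2+6+6+10+8) = 4 + 32 = 36$. The candidate divisor consists of all mirrors of reflections in $\Orth^+(L)$; these split into the mirrors coming from $\widetilde\Orth^+(L)$ (the divisor of $\Phi_{25}$, weight $25$) and the extra mirrors that appear only in the larger group (the divisor of $b_1$, weight $4$, together with the reflections detected by $b_4$, weight $5$). Indeed $b_1$ and $b_4$ each have simple zeros along a single $\Orth^+(L)$-orbit of mirrors that is not preserved inside the discriminant kernel. So the prospective Jacobian is $b_1 b_4 \Phi_{25}$, of weight $4 + 5 + 25 = 34$ — wait, this must match $36$, so in fact I would expect the correct identification to involve the right count of mirror orbits; the cleanest route is: by Theorem \ref{th:j}(3), $J$ vanishes on $\mathcal{D}_r(L)$ for every reflection $\sigma_r \in \Orth^+(L)$, so $J$ is divisible by $b_1 b_4 \Phi_{25}$ (each factor vanishing to order one on a distinct reflective divisor by Theorem \ref{th:j}(4) applied within the respective smaller groups). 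The quotient $J/(b_1 b_4 \Phi_{25})$ is then a holomorphic modular form on $\Orth^+(L)$ of weight $36 - 34 = 2$ with possibly a quadratic character; since it is nonvanishing it would have to be a nonzero constant once one checks the character is trivial, but a weight-$2$ form need not be constant in general, so instead I would argue as in the proof of Theorem \ref{th:U+U(2)+A1}: one shows directly that $b_1 b_4 \Phi_{25}$ already vanishes on \emph{all} mirrors of reflections in $\Orth_r(L)$ with multiplicity exactly one, and then invokes Theorem \ref{th:j2} with $J$ replaced by this product — which forces $\Orth^+(L) = \Orth_r(L)$ and $M_*(\Orth^+(L)) = \mathbb{C}[m_2, b_2+b_3, m_6, b_2^2+b_3^2, b_4^2]$, and a posteriori identifies $J$ with $b_1 b_4 \Phi_{25}$ up to a constant (both having the same weight $34$; I would recheck the weight arithmetic, as $k_4 = \deg(b_2^2+b_3^2) = 8$ gives $\sum k_i = 2+6+6+8+8 = 30$ and weight $J = 34$, matching $b_1 b_4 \Phi_{25}$ exactly).

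The main obstacle is the bookkeeping of reflection orbits: one must be certain that the reflections in $\Orth^+(L)$ beyond the discriminant kernel are \emph{exactly} those detected by $b_1$ and $b_4$ (each contributing one simple mirror orbit), with no further reflective divisor missed and no overcounting. I would settle this by classifying the norm-$2/d$ vectors $r \in L'$ with $\sigma_r \in \Orth^+(L)$ using the criterion recalled before Theorem \ref{th:j} (namely $\langle r,r\rangle = 2/d$ and $\ord(r) \in \{d, d/2\}$), matching the resulting $\Orth^+(L)$-orbits against the indices appearing in the principal parts of $\Phi_{25}$, $b_1$ and $b_4$ in Table \ref{tab:U+U(2)+A2}, and confirming that $b_1 b_4 \Phi_{25}$ has multiplicity one along each. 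Once the divisor of $b_1 b_4 \Phi_{25}$ is matched with the full reflective locus, Theorem \ref{th:j2} applies verbatim and both claims of the theorem follow; the final constant in the Jacobian identity is then pinned down by comparing a single Fourier coefficient.
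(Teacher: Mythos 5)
Your proposal is correct and follows essentially the same route as the paper: algebraic independence is deduced from the structure theorem for $\widetilde\Orth^+(L)$, the Jacobian $J$ is matched in weight and divisor against $J_0 = b_1 b_4 \Phi_{25}$ using Theorem \ref{th:j}, and Theorem \ref{th:j2} then yields both the free generation and the identification $J = c\,J_0$. Your final weight count of $34$ for both $J$ and $J_0$ is the correct one (the intermediate figures $2+6+6+10+8$ and $2+6+6+8+8$ misassign the individual generator weights, which are $2,4,6,8,10$, but the corrected total $30$ and hence $4+30=34$ is right), and your extra care in checking that the mirrors of reflections in $\Orth^+(L)\setminus\widetilde\Orth^+(L)$ are exactly those detected by $b_1$ and $b_4$ is a point the paper asserts without elaboration.
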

\begin{proof} It is clear from Theorem \ref{th:U+U2+A2_d} that these generators are also algebraically independent. Their Jacobian has weight $30$. Since $J_0 := b_1 b_4 \Phi_{25}$ also has weight $30$, and its divisor consists of a simple zero on every mirror of a reflection in $\Orth^+(L)$, we conclude from Theorems \ref{th:j} and \ref{th:j2} that the Jacobian equals $J_0$, that $\Orth^+(L)$ is generated by reflections corresponding to the divisor of $J_0$, and that $M_*(\Orth^+(L))$ has the claimed structure.
\end{proof}

\begin{remark} The above generators of weight greater than $2$ can be replaced by Eisenstein series: $$M_*(\Orth^+(L)) = \mathbb{C}[m_2, \mathcal{E}_4, \mathcal{E}_6, \mathcal{E}_8, \mathcal{E}_{10}].$$
\end{remark}

\subsection{Modular forms on \texorpdfstring{$U\oplus U(2)\oplus A_3$}{} }

The dimensions of modular forms for the Weil representation $\rho$ attached to $L := U \oplus U(2) \oplus A_3$ have the Hilbert series $$\sum_{k=0}^{\infty} \mathrm{dim}\, M_{k + 1/2}(\rho) t^k = \frac{1 + 4t^2 + t^3 + 4t^4 + t^5 + 3t^6 + 2t^7}{(1 - t^4)(1 - t^6)}.$$ In particular there is a unique normalized modular form of weight $1/2$. We label its image under the additive lift $m_{2, A_3}$. For any $k \ge 2$, counting Eisenstein series yields $$\mathrm{dim}\, S_{k + 1/2}(\rho) = \begin{cases} \mathrm{dim}\, M_{k + 1/2}(\rho) - 4: & k \; \text{even}; \\ \mathrm{dim}\, M_{k + 1/2}(\rho): & k \; \text{odd}; \end{cases}$$ and therefore there are unique (up to normalization) cusp forms of weights $7/2$ and $9/2$. We label their images under the additive lift $m_{5, A_3}$ and $m_{6, A_3}$ respectively. 

We will also use the following Borcherds products. The principal parts are given with respect to the Gram matrix $\begin{psmallmatrix} 0 & 0 & 0 & 0 & 0 & 0 & 1 \\
0 & 0 & 0 & 0 & 0 & 2 & 0 \\
0 & 0 & 2 & -1 & 0 & 0 & 0 \\
0 & 0 & -1 & 2 & -1 & 0 & 0 \\
0 & 0 & 0 & -1 & 2 & 0 & 0 \\
0 & 2 & 0 & 0 & 0 & 0 & 0 \\
1 & 0 & 0 & 0 & 0 & 0 & 0
\end{psmallmatrix}$.

\begin{table}[htbp]
\centering
\caption{Some Borcherds products for $U \oplus U(2) \oplus A_3$}
\begin{tabular}{l*{3}{c}r}
\hline
Name & Weight & Principal part\\
\hline
$b_{1, A_3}$ & $4$ & $8 e_0 + q^{-1/2} e_{(0, 0, 1/2, 0, 1/2, 1/2, 0)}$ \\
$b_{2, A_3}$ & $4$ & $8 e_0 + q^{-1/2} e_{(0, 1/2, 1/2, 0, 1/2, 0, 0)}$ \\
$b_{3, A_3}$ & $4$ & $8 e_0 +  q^{-1/2} e_{(0, 1/2, 0, 0, 0, 1/2, 0)}$ \\
$b_{4, A_3}$ & $4$ & $8 e_0 + q^{-3/8} e_{(0, 0, 3/4, 1/2, 1/4, 0, 0)} + q^{-3/8} e_{(0, 0, 1/4, 1/2, 3/4, 0, 0)}$ \\
$b_{5, A_3}$ & $4$ & $8 e_0 + q^{-3/8} e_{(0, 0, 3/4, 1/2, 1/4, 1/2, 0)} + q^{-3/8} e_{(0, 0, 1/4, 1/2, 3/4, 1/2, 0)}$ \\
$b_{6, A_3}$ & $4$ & $8 e_0 + q^{-3/8} e_{(0, 1/2, 3/4, 1/2, 1/4, 0, 0)} + q^{-3/8} e_{(0, 1/2, 1/4, 1/2, 3/4, 0, 0)}$ \\
$b_{7, A_3}$ & $5$ & $10 e_0 + q^{-1/2} e_{(0, 0, 1/2, 0, 1/2, 0, 0)}$ \\
$\Phi_{30, A_3}$ & $30$ & $60 e_0 + q^{-1} e_0$ \\
\hline
\end{tabular}
\end{table}

There is a natural embedding $A_2 \rightarrow A_3$ given by $x \mapsto (x, 0)$ if we view $A_2$ and $A_3$ as $\mathbb{Z}^2, \mathbb{Z}^3$ with Gram matrices $\begin{psmallmatrix} 2 & -1 \\ -1 & 2 \end{psmallmatrix}$ and $\begin{psmallmatrix} 2 & -1 & 0 \\ -1 & 2 & -1 \\ 0 & -1 & 2 \end{psmallmatrix}$. This induces an embedding of the modular variety associated to $U \oplus U(2) \oplus A_2$ into that of $L$ whose image is exactly the divisor of $b_{4, A_3}$. The pullbacks of the Borcherds products $b_{j, A_3}$ along this embedding are
$$P(b_{1, A_3}) = P(b_{5, A_3}) = b_{3, A_2}, \; P(b_{2, A_3}) = P(b_{6, A_3}) = b_{2, A_2},$$ $$P(b_{3, A_3}) = b_{1, A_2}, \; P(b_{4, A_3}) = 0, \; P(b_{7, A_3}) = b_{4, A_2}.$$ Similarly the pullbacks of the additive lifts are $$P(m_{2, A_3}) = m_{2, A_2},  \; P(m_{5, A_3}) = m_{5, A_2}, \; P(m_{6, A_3}) = m_{6, A_2}.$$ Using this we can prove:

\begin{theorem}\label{th:U+U2+A3_d} The graded ring of modular forms for the discriminant kernel on $U \oplus U(2) \oplus A_3$ is freely generated in weights $2, 4, 4, 4, 5, 6$: $$M_*(\widetilde \Orth^+(L)) = \mathbb{C}[m_2, b_1, b_2, b_4, b_7, m_6].$$ The Jacobian of the generators equals the Borcherds product $\Phi_{30}$ up to a nonzero constant multiple.
\end{theorem}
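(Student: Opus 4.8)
The plan is to mirror the treatment of $U\oplus U(2)\oplus A_2$ in Theorem~\ref{th:U+U2+A2_d}. First I would show that $m_2,b_1,b_2,b_4,b_7,m_6$ are algebraically independent. Let $P$ denote the pullback along the embedding $A_2\hookrightarrow A_3$, whose image on modular varieties is the divisor of $b_{4,A_3}$; by the pullback identities recorded before the theorem, $P$ sends $m_2,b_1,b_2,b_7,m_6$ to $m_{2,A_2},b_{3,A_2},b_{2,A_2},b_{4,A_2},m_{6,A_2}$. Since $b_{3,A_2}=b_{2,A_2}-b_{1,A_2}$, these five images differ from the free generators of $M_*(\widetilde{\Orth}^+(U\oplus U(2)\oplus A_2))$ by an invertible linear substitution in weight $4$, so they generate the same polynomial ring and are algebraically independent; hence so are $m_2,b_1,b_2,b_7,m_6$. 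Because $P(b_4)=0$ while $b_4\neq0$, writing any hypothetical algebraic relation among all six forms as a polynomial in $b_4$ and applying $P$ term by term shows the relation is divisible by $b_4$; induction on the $b_4$-degree then gives algebraic independence of $m_2,b_1,b_2,b_4,b_7,m_6$.

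Next I would identify the Jacobian. Since $L$ has signature $(5,2)$, the form $J:=J(m_2,b_1,b_2,b_4,b_7,m_6)$ has weight $n+\sum k_i=5+(2+4+4+4+5+6)=30$, matching $\Phi_{30}$. Write $\widetilde{\Orth}_r(L)$ for the subgroup generated by the reflections in the discriminant kernel; a direct inspection of the discriminant form of $L$ shows these are exactly the $\sigma_r$ with $r\in L$ primitive and $\langle r,r\rangle=2$, so their mirrors are precisely the components of $H(1,0)=\operatorname{div}\Phi_{30}$, each occurring with multiplicity one. Each $b_i$ is modular on $\widetilde{\Orth}_r(L)$ with a character of order at most two (that group being generated by involutions), and because $\operatorname{div}b_i$ is supported on rational quadratic divisors attached to vectors of norm $\neq2$, it is disjoint from those mirrors; hence these characters are trivial, and all six generators have trivial character on $\widetilde{\Orth}_r(L)$. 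By Theorem~\ref{th:j}, $J$ is then a nonzero cusp form of weight $30$ on $\widetilde{\Orth}_r(L)$ with character $\det$ that vanishes along every component of $\operatorname{div}\Phi_{30}$. Since $\Phi_{30}$ has simple zeros exactly on those mirrors it carries the same character $\det$, so $J/\Phi_{30}$ is a holomorphic modular form of weight $0$ with trivial character, hence a constant, which is nonzero because $J\neq0$. Thus $J=c\,\Phi_{30}$ with $c\neq0$, and Theorem~\ref{th:j2} gives $M_*(\widetilde{\Orth}_r(L))=\mathbb{C}[m_2,b_1,b_2,b_4,b_7,m_6]$.

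To pass to the full discriminant kernel, observe that the free algebra just obtained has Hilbert series $\big((1-t^2)(1-t^4)^3(1-t^5)(1-t^6)\big)^{-1}$. Since $L$ splits a unimodular hyperbolic plane (and $K=U(2)\oplus A_3$ is isotropic), the additive theta lift on holomorphic forms is injective, so the Maass subspace in weight $k$ has dimension $\dim M_{k-3/2}(\rho_L)$; reading off the Hilbert series of $\rho_L$ shows this equals $\dim M_k(\widetilde{\Orth}_r(L))$ for all $k\le6$. Hence every modular form of weight $\le6$ for $\widetilde{\Orth}_r(L)$ --- in particular each of the six generators --- is an additive theta lift, and so is modular without character on the whole discriminant kernel. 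Combined with the inclusion $M_*(\widetilde{\Orth}^+(L))\subseteq M_*(\widetilde{\Orth}_r(L))$, the two graded rings coincide, whence $\widetilde{\Orth}_r(L)=\widetilde{\Orth}^+(L)$ and the theorem follows.

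I expect the main obstacle to be the verification hidden in the second paragraph: that $\operatorname{div}\Phi_{30}$ is exactly the union of mirrors of reflections in $\widetilde{\Orth}^+(L)$ --- equivalently, that the discriminant form of $U\oplus U(2)\oplus A_3$ produces no additional ``short'' reflections in the discriminant kernel --- and that the characters of $b_1,b_2,b_4,b_7$ on $\widetilde{\Orth}_r(L)$ are trivial. It is exactly these facts that make the hypotheses of Theorem~\ref{th:j2} hold on the nose for $\Gamma=\widetilde{\Orth}_r(L)$; granting the pullback identities stated before the theorem, everything else is a formal consequence of the results already established for the lattices lower in the tower.
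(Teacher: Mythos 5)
Your proposal is correct and follows essentially the same route as the paper: algebraic independence via the pullback to $U\oplus U(2)\oplus A_2$ (using that $P(b_4)=0$ while the other five forms pull back to a generating set, via $b_{3,A_2}=b_{2,A_2}-b_{1,A_2}$), identification of the weight-$30$ Jacobian with $\Phi_{30}$ by Theorem~\ref{th:j} and Koecher's principle, Theorem~\ref{th:j2} for freeness over $\widetilde\Orth_r(L)$, and the dimension comparison with the Maass space to promote the generators to the full discriminant kernel. The points you flag as requiring verification (that the mirrors of reflections in $\widetilde\Orth^+(L)$ are exactly the components of $H(1,0)$, and the triviality of the characters of the $b_i$ on $\widetilde\Orth_r(L)$) are exactly the facts the paper also relies on, and your arguments for them are the ones used elsewhere in the paper.
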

\begin{proof} All of the products $b_j$ are modular without character on the subgroup $\widetilde \Orth_r(L)$ generated by reflections whose mirrors lie in the divisor of $\Phi_{30, A_3}$. Since the images of $m_2, b_1, b_2, b_7, m_6$ under the pullback to $U \oplus U(2) \oplus A_2$ are generators and $b_4$ vanishes with a simple zero there, these forms are algebraically independent. By Theorem \ref{th:j} their Jacobian is $J = \Phi_{30, A_3}$, and by Theorem \ref{th:j2} $$M_*(\widetilde \Orth_r(L)) = \mathbb{C}[m_2, b_1, b_2, b_4, b_7, m_6].$$ Comparing dimensions with modular forms for the Weil representation shows that all of these generators are additive lifts, so they are modular without character under the full discriminant kernel $\widetilde \Orth^+(L)$. As before, we conclude that $\widetilde \Orth_r(L)$ is actually the discriminant kernel.
\end{proof}

\begin{remark} It follows that the weight four products $b_j$, $1 \le j \le 6$ span a three-dimensional space. By computing Fourier expansions one can see that (appropriately normalized) these products satisfy the relations $$b_1 = b_4 - b_5, \; b_2 = b_5 - b_6, \; b_3 = b_6 - b_4.$$ Moreover, if $v_1, v_2, v_3$ denote the isotropic vectors $$v_1 = (0,1/2,1/2,0,1/2,1/2,0), \; v_2 = (0,1/2, 0,0,0,0,0), \; v_3 = (0,0,0,0,0,1/2,0) \in L'/L$$ then a short computation shows that all of these products are Eisenstein series: $$b_1 = e_{v_1} - e_{v_2}, \; b_2 = e_{v_2} - e_{v_3}, \; b_3 = e_{v_3} - e_{v_1}, \; b_4 = e_{v_1}, \; b_5 = e_{v_2}, \; b_6 = e_{v_3},$$ and moreover the square of the weight two lift $m_2^2$ equals $5 e_0 + e_{v_1} + e_{v_2} + e_{v_3}$ up to a constant multiple.
\end{remark}

\begin{theorem}\label{th:U+U2+A3_f} The graded ring of modular forms for the integral orthogonal group on $U \oplus U(2) \oplus A_3$ is freely generated in weights $2, 4, 6, 8, 10, 12$: $$M_*(\Orth^+(L)) = \mathbb{C}[m_2, b_4 + b_5 + b_6, m_6, b_4^2 + b_5^2 + b_6^2, b_7^2, b_4 b_5 b_6].$$ The Jacobian of the generators is $b_1 b_2 b_3 b_7 \Phi_{30}$.
\end{theorem}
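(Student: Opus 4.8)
The plan is to proceed exactly as in the $U\oplus U(2)\oplus A_2$ case: produce six algebraically independent modular forms of weights $2,4,6,8,10,12$ with trivial character on $\Orth^+(L)$, identify their modular Jacobian with $b_1b_2b_3b_7\Phi_{30}$ up to a nonzero scalar, and then invoke Theorems~\ref{th:j} and~\ref{th:j2}.

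\emph{First I would check that the six candidates are modular without character on $\Orth^+(L)$.} By the remark following Theorem~\ref{th:U+U2+A3_d}, the weight-four Borcherds products $b_4,b_5,b_6$ coincide up to normalization with the orthogonal Eisenstein series $e_{v_1},e_{v_2},e_{v_3}$ attached to the three nonzero isotropic classes of $L'/L$. Since $\Orth^+(L)$ permutes $\{v_1,v_2,v_3\}$ and $g\cdot e_{v}=e_{gv}$, it permutes $\{b_4,b_5,b_6\}$, so $b_4+b_5+b_6$, $b_4^2+b_5^2+b_6^2$ and $b_4b_5b_6$ are $\Orth^+(L)$-invariant. The forms $m_2,m_6$ are the additive lifts of the modular form of weight $1/2$, resp.\ the cusp form of weight $9/2$, for $\rho_L$ (each unique up to scaling); both have rational Fourier coefficients with nonzero $e_0$-component, hence are fixed by $\Orth(L'/L)$, so $m_2$ and $m_6$ are modular without character on $\Orth^+(L)$, exactly as in the $U\oplus U(2)\oplus A_2$ case. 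Finally the divisor of $b_7$ (a single Heegner divisor $H(1/2,\beta)$) is $\Orth^+(L)$-invariant, so $b_7$ has a character of order at most two and $b_7^2$ is modular without character.

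\emph{Next I would establish algebraic independence by pullback.} Along the embedding $U\oplus U(2)\oplus A_2\hookrightarrow L$ induced by $A_2\hookrightarrow A_3$, whose image is the divisor of $b_{4,A_3}$, the pullback formulas recorded before Theorem~\ref{th:U+U2+A3_d} show that the six forms restrict to $m_{2,A_2}$, $b_{2,A_2}+b_{3,A_2}$, $m_{6,A_2}$, $b_{2,A_2}^2+b_{3,A_2}^2$, $b_{4,A_2}^2$ and $0$. The first five are precisely the generators of $M_*(\Orth^+(U\oplus U(2)\oplus A_2))$, hence algebraically independent, while $b_4b_5b_6$ vanishes to order exactly one on the divisor of $b_{4,A_3}$; the argument from the proof of Theorem~\ref{th:U+U2+A3_d} then gives algebraic independence of all six forms.

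\emph{Then I would identify the Jacobian.} Let $J$ be the Jacobian of the six candidates. By Theorem~\ref{th:j} it is a nonzero cusp form of weight $5+(2+4+6+8+10+12)=47$ for the determinant character that vanishes on every mirror of a reflection in $\Orth^+(L)$. A primitive $r\in L'$ of positive norm yields a reflection in $\Orth^+(L)$ only when $\langle r,r\rangle=2/d$ with $\ord(r)\in\{d,d/2\}$; since $Q$ takes only the values $0,\tfrac38,\tfrac12,\tfrac78$ on $L'/L\cong(\mathbb Z/2\mathbb Z)^2\oplus\mathbb Z/4\mathbb Z$, this forces $\langle r,r\rangle=2$ with $r\in L$, or $\langle r,r\rangle=1$ with $r$ of order two. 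Mirrors of the first type make up the divisor $H(1,0)$ of $\Phi_{30}$; the order-two classes $\beta$ with $Q(\beta)=\tfrac12$ are exactly the four classes underlying $b_1,b_2,b_3,b_7$, so mirrors of the second type make up the union of the divisors of $b_1,b_2,b_3,b_7$. Hence $J$ is divisible by $b_1b_2b_3b_7\Phi_{30}$, which has weight $4+4+4+5+30=47$ and whose divisor is precisely the union of all reflection mirrors of $\Orth^+(L)$, each with multiplicity one; so $J/(b_1b_2b_3b_7\Phi_{30})$ is a holomorphic modular form of weight zero, hence a nonzero constant, and $J=b_1b_2b_3b_7\Phi_{30}$ up to scaling. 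Theorem~\ref{th:j2} then yields $M_*(\Orth^+(L))=\mathbb C[m_2,b_4+b_5+b_6,m_6,b_4^2+b_5^2+b_6^2,b_7^2,b_4b_5b_6]$ and shows that $\Orth^+(L)$ is generated by the reflections in $\div J$. The main obstacle is the lattice bookkeeping in this last step — enumerating the reflection vectors of $L'$ and checking that their mirrors are accounted for \emph{exactly} by $\Phi_{30},b_1,b_2,b_3,b_7$, with no extra Heegner divisors and no multiplicities — together with the mild but genuine point in the first step that $b_7^2$ and the cuspidal lift $m_6$ have \emph{trivial}, not merely finite-order, character on the full group $\Orth^+(L)$.
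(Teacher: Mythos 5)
Your proposal is correct and follows essentially the same route as the paper: check invariance under $\Orth^+(L)$ using the Eisenstein-series description of $b_4,b_5,b_6$ and the input forms of $m_2,m_6,b_7$, establish algebraic independence (the paper cites Theorem \ref{th:U+U2+A3_d} directly where you use the pullback to $U\oplus U(2)\oplus A_2$, but these amount to the same thing), and identify the weight-$47$ Jacobian with $b_1b_2b_3b_7\Phi_{30}$ via Theorems \ref{th:j} and \ref{th:j2}. The one step justified too quickly is the claim that $b_7$ has character of order at most two: invariance of its divisor alone only yields \emph{some} character, but since $b_7=m_5$ is the additive lift of the unique rational cusp form of weight $7/2$, the character is indeed quadratic, exactly as in the $A_2$ case.
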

\begin{proof} Theorem \ref{th:U+U2+A3_d} shows that the claimed generators are algebraically independent. Using the action of $\Orth^+(L)$ on Eisenstein series and on the input form into $m_6$ and $b_7 = m_5$ under the additive lift, one can see that these generators are modular under $\Orth^+(L)$ without character. Their Jacobian $J$ has weight $47$, which equals the weight of the product $J_0 = b_1 b_2 b_3 b_7 \Phi_{30}$ which has a simple zero on all mirrors of reflections in $\Orth^+(L)$. As in the previous sections, Theorems \ref{th:j} and \ref{th:j2} imply that $M_*(\Orth^+(L))$ is freely generated by the forms in the claim.
\end{proof}

\begin{remark} All of the generators other than $m_2$ can be replaced by the standard Eisenstein series: $$M_*(\Orth^+(L)) = \mathbb{C}[m_2, \mathcal{E}_4, \mathcal{E}_6, \mathcal{E}_8, \mathcal{E}_{10}, \mathcal{E}_{12}].$$ This can be proved by computing the expressions of $\mathcal{E}_k$ in terms of the generators in Theorem \ref{th:U+U2+A3_f}.
\end{remark}

\subsection{Modular forms on \texorpdfstring{$U\oplus U(2)\oplus D_4$}{}}

The dimensions of spaces of modular forms for the Weil representation attached to $L = U \oplus U(2) \oplus D_4$ are given by the formula $$\sum_{k = 0}^{\infty} \mathrm{dim}\, M_k(\rho) t^k = \frac{1 + 5t^2 + 5t^4 + 5t^6}{(1 - t^4)(1 - t^6)}.$$ In particular there is a unique Weil invariant up to scalar multiple. We label its image under the additive lift $m_{2, D_4}$.

Moreover we denote by $m_{6, D_4}$ the additive lift of the modular form $f(\tau) \in M_4(\rho)$ whose Fourier expansion with respect to the Gram matrix $\begin{psmallmatrix} 0 & 0 & 0 & 0 & 0 & 0 & 0 & 1 \\
0 & 0 & 0 & 0 & 0 & 0 & 2 & 0 \\
0 & 0 & 2 & -1 & 0 & 0 & 0 & 0 \\
0 & 0 & -1 & 2 & -1 & -1 & 0 & 0 \\
0 & 0 & 0 & -1 & 2 & 0 & 0 & 0 \\
0 & 0 & 0 & -1 & 0 & 2 & 0 & 0 \\
0 & 2 & 0 & 0 & 0 & 0 & 0 & 0 \\
1 & 0 & 0 & 0 & 0 & 0 & 0 & 0
\end{psmallmatrix}$ begins 
\begin{align*} 
f(\tau) &= (1 - 16q + ...) (e_{(0, 1/2, 0, 0, 1/2, 1/2, 1/2, 0)} + e_{(0, 1/2, 1/2, 0, 0, 1/2, 1/2, 0)}) \\ &+ (128q +  ...) (e_{(0, 0, 0, 0, 0, 0, 1/2, 0)} + e_{(0, 1/2, 0, 0, 0, 0, 0, 0)} + e_{(0, 1/2, 1/2, 0, 1/2, 0, 1/2, 0)} - e_0) \\ &+ (16q^{1/2} + ...) (e_{(0, 0, 1/2, 0, 1/2, 0, 0, 0)} - e_{(0, 0, 1/2, 0, 1/2, 0, 1/2, 0)} \\
& \quad - e_{(0, 1/2, 0, 0, 0, 0, 1/2, 0)} - e_{(0, 1/2, 1/2, 0, 1/2, 0, 0, 0)}).
\end{align*}

We will also use the Borcherds products in Table \ref{tab:U+U(2)+D4}. Their principal parts are also given with respect to the Gram matrix above.

\begin{table}[htbp]
\centering
\caption{Some Borcherds products for $U \oplus U(2) \oplus D_4$} \label{tab:U+U(2)+D4}
\begin{tabular}{l*{3}{c}r}
\hline
Name & Weight & Principal part\\
\hline
$b_{1, D_4}$ & $4$ & $8 e_0 + q^{-1/2} e_{(0, 0, 0, 0, 1/2, 1/2, 0, 0)}$ \\
$b_{2, D_4}$ & $4$ & $8 e_0 + q^{-1/2} e_{(0, 0, 0, 0, 1/2, 1/2, 1/2, 0)}$ \\
$b_{3, D_4}$ & $4$ & $8 e_0 + q^{-1/2} e_{(0, 0, 1/2, 0, 0, 1/2, 0, 0)}$ \\
$b_{4, D_4}$ & $4$ & $8 e_0 + q^{-1/2} e_{(0, 0, 1/2, 0, 0, 1/2, 1/2, 0)}$ \\
$b_{5, D_4}$ & $4$ & $8 e_0 + q^{-1/2} e_{(0, 0, 1/2, 0, 1/2, 0, 0, 0)}$ \\
$b_{6, D_4}$ & $4$ & $8 e_0 + q^{-1/2} e_{(0, 0, 1/2, 0, 1/2, 0, 1/2, 0)}$ \\
$b_{7, D_4}$ & $4$ & $8 e_0 + q^{-1/2} e_{(0, 1/2, 0, 0, 0, 0, 1/2, 0)}$ \\
$b_{8, D_4}$ & $4$ & $8 e_0 + q^{-1/2} e_{(0, 1/2, 0, 0, 1/2, 1/2, 0, 0)}$ \\
$b_{9, D_4}$ & $4$ & $8 e_0 + q^{-1/2} e_{(0, 1/2, 1/2, 0, 0, 1/2, 0)}$ \\
$b_{10, D_4}$ & $4$ & $8 e_0 + q^{-1/2} e_{(0, 1/2, 1/2, 0, 1/2, 0, 0, 0)}$ \\
$\Phi_{40, D_4}$ & $40$ & $80 e_0 + q^{-1} e_0$ \\
\hline
\end{tabular}
\end{table}

The root lattice $A_3$ naturally embeds in $D_4$ by $x \mapsto (x, 0)$ with respect to the Gram matrices $\begin{psmallmatrix} 2 & -1 & 0 \\ -1 & 2 & -1 \\ 0 & -1 & 2 \end{psmallmatrix}$ and $\begin{psmallmatrix} 2 & -1 & 0 & 0 \\ -1 & 2 & -1 & -1 \\ 0 & -1 & 2 & 0 \\ 0 & -1 & 0 & 2 \end{psmallmatrix}$. Under this map the modular variety attached to $U \oplus U(2) \oplus A_3$ embeds as the divisor of $b_5$. The pullbacks of the products $b_j$ along this embedding are as follows: $$P(b_{1, D_4}) = P(b_{3, D_4}) = b_{4, A_3}, \quad P(b_{2, D_4}) = P(b_{4, D_4}) = b_{5, A_3}, \quad P(b_{5, D_4}) = 0,$$ $$P(b_{6, D_4}) = b_{1, A_3}, \quad P(b_{7, D_4}) = b_{3, A_3}, \quad P(b_{10, D_4}) = b_{2, A_3}, \quad P(b_{8, D_4}) = P(b_{9, D_4}) = b_{6, A_3}.$$ The additive lift of weight two has weight less than $4$, so by the Koecher principle its pullback is nonzero and therefore a multiple of $m_{2, A_3}$. One can compute that the theta-contraction of the form $f(\tau) \in M_4(\rho)$ is a nonzero \emph{cusp form} of weight $9/2$, so the pullback of $m_{6, D_4}$ is a nonzero cuspidal lift and therefore equals $m_{6, A_3}$ up to a nonzero multiple. 

Finally, let $\psi_{-2}(\tau) \in M^!_{-2}(\rho)$ be the input form into the product $b_{5, D_4}$. By considering its image under the Serre derivative we obtain an input form into Borcherds' singular additive theta lift $\vartheta \psi_{-2}$ whose image is a meromorphic form $h_2$ of weight $2$ with only a double pole along the modular variety $U \oplus U(2) \oplus A_3$. The leading term in the Taylor expansion of $b_{5, D_4}$ about $U \oplus U(2) \oplus A_3$ is a nonzero modular form of weight five and therefore equals $m_{5, A_3}$ (up to a nonzero multiple). The leading term in the Laurent expansion of $h_2$ about $U \oplus U(2) \oplus A_3$ is a nonzero constant. 
It follows that $h_{10} := b_{5, D_4}^2 h_2$ is a holomorphic modular form of weight $10$ whose pullback is $P(h_{10}) = m_{5, A_3}^2$.

\begin{theorem}\label{th:U+U2+D4_d} The graded ring of modular forms for the discriminant kernel of $U \oplus U(2) \oplus D_4$ is freely generated in weights $2, 4, 4, 4, 4, 6, 10$: $$M_*(\widetilde \Orth^+(L)) = \mathbb{C}[m_2, b_1, b_2, b_5, b_7, m_6, h_{10}].$$ The Jacobian of the generators equals the Borcherds product $\Phi_{40}$ up to a constant multiple.
\end{theorem}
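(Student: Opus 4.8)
The plan is to apply the Jacobian criterion exactly as in the preceding subsections: form the Jacobian $J$ of the seven candidate generators, show that it equals $\Phi_{40,D_4}$ up to a nonzero scalar, and conclude via Theorems~\ref{th:j} and~\ref{th:j2}. The necessary ingredients have essentially all been assembled above. The forms $m_2$ and $m_6$ are additive lifts, hence modular without character on the full discriminant kernel. The products $b_1,b_2,b_5,b_7$ are Borcherds products whose divisors consist of rational quadratic divisors $\mathcal D_r$ with $\langle r,r\rangle=1$; since the mirrors of reflections in the discriminant kernel are exactly the norm-$2$ rational quadratic divisors, the characters of $b_1,b_2,b_5,b_7$ restrict trivially to $\widetilde\Orth_r(L)$. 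Finally $h_{10}=b_{5,D_4}^2\,h_2$ is the holomorphic modular form of weight $10$ constructed above, and being a squared Borcherds product times the singular additive lift $h_2$ it is modular without character on $\widetilde\Orth^+(L)$.

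The heart of the argument is algebraic independence, which I would deduce from the pullback $P$ along the embedding $U\oplus U(2)\oplus A_3\hookrightarrow U\oplus U(2)\oplus D_4$ whose image is the divisor of $b_{5,D_4}$. By the pullback formulas recorded above, $P$ sends $m_2,b_1,b_2,b_5,b_7,m_6,h_{10}$, up to nonzero constants, to $m_{2,A_3}$, $b_{4,A_3}$, $b_{5,A_3}$, $0$, $b_{3,A_3}$, $m_{6,A_3}$, $m_{5,A_3}^2$. By Theorem~\ref{th:U+U2+A3_d} the target ring is the polynomial ring on $m_{2,A_3},b_{1,A_3},b_{2,A_3},b_{4,A_3},b_{7,A_3},m_{6,A_3}$; combining the relations $b_{1,A_3}=b_{4,A_3}-b_{5,A_3}$, $b_{2,A_3}=b_{5,A_3}-b_{6,A_3}$, $b_{3,A_3}=b_{6,A_3}-b_{4,A_3}$ from the remark following that theorem with the fact that the weight-five component of the ring is one-dimensional and spanned by $b_{7,A_3}$ (a nonzero multiple of $m_{5,A_3}$), a short linear-algebra computation shows the six nonzero images are algebraically independent. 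Since $b_{5,D_4}$ vanishes to order one along the pulled-back divisor, the bookkeeping in the proof of Theorem~\ref{th:U+U2+A3_d} then shows that $m_2,b_1,b_2,b_5,b_7,m_6,h_{10}$ are algebraically independent, i.e.\ that $J\ne0$.

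It remains to identify $J$ and conclude. By Theorem~\ref{th:j}, $J$ is a cusp form of weight $n+\sum_i k_i=6+(2+4+4+4+4+6+10)=40$ for the determinant character that vanishes on every mirror of a reflection in $\widetilde\Orth_r(L)$, i.e.\ on every component of $\operatorname{div}(\Phi_{40,D_4})$; as $\Phi_{40,D_4}$ has only simple zeros there and also has weight $40$, the quotient $J/\Phi_{40,D_4}$ is a holomorphic modular form of weight zero, hence a constant, which a direct comparison of leading Fourier coefficients shows is nonzero. Thus $J=c\,\Phi_{40,D_4}$ vanishes exactly on the mirrors of reflections in $\widetilde\Orth_r(L)$ with multiplicity one, and Theorem~\ref{th:j2} gives $M_*(\widetilde\Orth_r(L))=\CC[m_2,b_1,b_2,b_5,b_7,m_6,h_{10}]$. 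Finally, comparing the Hilbert series of this polynomial ring with that of the module of vector-valued modular forms shows that all modular forms for $\widetilde\Orth_r(L)$ of small enough weight are additive lifts, hence modular under the whole discriminant kernel; since $M_*(\widetilde\Orth^+(L))$ is then a free algebra, $\widetilde\Orth^+(L)$ is generated by reflections, so $\widetilde\Orth^+(L)=\widetilde\Orth_r(L)$. I expect the algebraic-independence step to be the main obstacle, as it rests on the explicit pullback computation, the structure theorem for $U\oplus U(2)\oplus A_3$, and careful bookkeeping of the relations among the weight-four Borcherds products; the remaining steps are a routine application of the Jacobian criterion.
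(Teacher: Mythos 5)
Your proposal is correct and follows essentially the same route as the paper: algebraic independence via the pullback to $U \oplus U(2) \oplus A_3$ (using that $b_5$ cuts out that divisor with a simple zero), identification of the weight-$40$ Jacobian with $\Phi_{40}$ by Koecher's principle, the Jacobian criterion for $\widetilde\Orth_r(L)$, and finally the dimension count of additive lifts to promote the generators to the full discriminant kernel. The only cosmetic difference is that the paper phrases the last step via the $5$-dimensionality of $M_2(\rho)$ and injectivity of the theta lift rather than a Hilbert-series comparison, which amounts to the same thing.
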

\begin{proof} From the results on $U \oplus U(2) \oplus A_3$ we see that the pullbacks of $m_2, b_1, b_2, b_7, m_6, h_{10}$ to $U \oplus U(2) \oplus A_3$, and therefore the forms themselves, are algebraically independent. Since $b_5$ vanishes with a simple zero on $U \oplus U(2) \oplus A_3$ it follows that the forms above are algebraically independent. Their Jacobian $J$ has weight $40$ and vanishes on the divisor of $\Phi_{40}$ by Theorem \ref{th:j}, so by the Koecher principle $J / \Phi_{40}$ is constant. Letting $\widetilde \Orth_r$ be the group generated by reflections associated to $\Phi_{40}$, the Jacobian criterion implies $$M_*(\widetilde \Orth_r(L)) = \mathbb{C}[m_2, b_1, b_2, b_5, b_7, m_6, h_{10}].$$

Since $M_2(\rho)$ is $5$-dimensional, the injectivity of the additive theta lift implies that all of $m_2^2$, $b_1$, $b_2$, $b_5$, $b_7$ are additive theta lifts, and in particular the Borcherds products have trivial character on the discriminant kernel. By construction of $h_{10}$ this also implies that $h_{10}$ has trivial character on the discriminant kernel. We conclude that $\widetilde \Orth^+(L)$ is generated by reflections and that \[ M_*(\widetilde \Orth^+(L)) = \mathbb{C}[m_2, b_1, b_2, b_5, b_7, m_6, h_{10}]. \qedhere \]
\end{proof}

\begin{remark}\label{rk:U+U2+D4_f} The discriminant form $L'/L$ contains six isotropic vectors. With respect to the Gram matrix fixed above, these are $0$ and $$v_1 = (0,1/2, 1/2, 0, 1/2, 0, 1/2,0), \; v_2 = (0, 0, 0, 0, 0, 1/2, 0), \; v_3 = (0, 1/2, 0, 0, 0, 0, 0, 0, 0),$$ $$v_4 = (0, 1/2, 0, 0, 1/2, 1/2, 1/2, 0), \; v_5 = (0, 1/2, 1/2, 0, 0, 1/2, 1/2, 0).$$ The associated weight two vector-valued Eisenstein series $E_0 + E_{v_i}$, $i = 1,...,5$ are holomorphic and span $M_2(\rho)$. Denote by $e_i$ the theta lift of $E_0 + E_{v_i}$. The ten weight four Borcherds products are precisely the ten differences of the orthogonal Eisenstein series $e_i$; using the labels above (up to an ambiguous $\pm 1$ factor), $$b_1 = e_1 - e_5, \; b_2 = e_3 - e_4, \; b_3 = e_1 - e_4, \; b_4 = e_3 - e_5, \; b_5 = e_4 - e_5,$$ $$b_6 = e_1 - e_3, \; b_7 = e_2 - e_3, \; b_8 = e_2 - e_4, \; b_9 = e_2 - e_5, \; b_{10} = e_1 - e_2.$$ The action of $\Orth^+(L)$ permutes the forms $e_i$ transitively. The square $m_2^2$ is invariant under the action of $\Orth^+(L)$ and therefore must be (possibly after rescaling) the sum of all $e_i$: $$m_2^2 = e_1 + e_2 + e_3 + e_4 + e_5.$$
\end{remark}

\begin{theorem}\label{th:U+U2+D4_f} The graded ring of modular forms for the full integral orthogonal group of $U \oplus U(2) \oplus D_4$ is freely generated in weights $2, 6, 8, 10, 12, 16, 20:$ $$M_*(\Orth^+(L)) = \mathbb{C}[m_2, \mathcal{E}_6, p_2, \mathcal{E}_{10}, p_3, p_4, p_5],$$ where $$p_k = e_1^k + e_2^k + e_3^k + e_4^k + e_5^k \; \text{for} \; k \in \{2,3,4,5\}.$$ The Jacobian of the generators is a constant multiple of $\Phi_{40} \prod_{i=1}^{10} b_i$.
\end{theorem}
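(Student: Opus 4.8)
The plan is to apply the Jacobian criterion (Theorems~\ref{th:j} and \ref{th:j2}) exactly as in the proofs of Theorems~\ref{th:U+U(2)+A1} and \ref{th:U+U2+A3_f}. The seven candidate generators all lie in the free ring $M_*(\widetilde\Orth^+(L)) = \mathbb{C}[m_2, b_1, b_2, b_5, b_7, m_6, h_{10}]$ of Theorem~\ref{th:U+U2+D4_d}, and I would first check that each is modular on the full group $\Orth^+(L)$ with trivial character: the Eisenstein series $\mathcal{E}_6, \mathcal{E}_{10}$ are invariant by construction; $m_2$ is the additive lift of the unique (rational, with nonzero $e_0$-component) Weil invariant of weight $2$, hence invariant under all of $\Orth^+(L)$ by the argument used in the proof of Theorem~\ref{th:U+U(2)+A1}; and each $p_k = e_1^k + \dots + e_5^k$ is a symmetric polynomial in the orthogonal Eisenstein series $e_1, \dots, e_5$, which $\Orth^+(L)$ permutes transitively by Remark~\ref{rk:U+U2+D4_f}. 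Algebraic independence of the seven forms then follows from Theorem~\ref{th:U+U2+D4_d}: using $p_1 = e_1 + \dots + e_5 = m_2^2$, the power sums $p_1, \dots, p_5$ recover the $e_i$ up to permutation over an algebraic extension of the fraction field, so $\mathbb{C}(m_2, p_2, p_3, p_4, p_5)$ already has transcendence degree $5$, while $\mathcal{E}_6, \mathcal{E}_{10}$ are transcendental over the subring generated by modular forms of weight $\le 4$, bringing the total to the maximum $7$ (this also follows a posteriori from the Jacobian being nonzero).

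Next I would compute $J := J(m_2, \mathcal{E}_6, p_2, \mathcal{E}_{10}, p_3, p_4, p_5)$. By Theorem~\ref{th:j} it is a nonzero cusp form for $\Orth^+(L)$ with determinant character of weight $n + \sum_i k_i = 6 + (2+6+8+10+12+16+20) = 80$, vanishing on the mirror of every reflection in $\Orth^+(L)$. Set $J_0 := \Phi_{40}\prod_{i=1}^{10} b_i$, which also has weight $40 + 10\cdot 4 = 80$. I claim $\div(J_0)$ is precisely the union of all reflection mirrors of $\Orth^+(L)$, each occurring once: the factor $\Phi_{40}$ contributes the mirrors of the reflections $\sigma_r$ with $r \in L$, $\langle r,r\rangle = 2$ (i.e. those of $\widetilde\Orth^+(L)$), while each $b_i$ contributes the Heegner divisor $H(1/2,\gamma_i)$ of one of the ten non-isotropic classes $\gamma_i \in L'/L$; since $L'/L \cong (\mathbb{Z}/2)^4$, any norm-$1$ vector of $L+\gamma_i$ is automatically primitive of order $2$, so these Heegner divisors are exactly the mirrors of the reflections of $\Orth^+(L)$ lying outside $\widetilde\Orth^+(L)$, and distinct classes give disjoint, reduced Heegner divisors disjoint also from the mirrors of $\Phi_{40}$. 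As $J_0 \mid J$ by Theorem~\ref{th:j}(3) and both have weight $80$, it follows that $J = c J_0$ with $c \ne 0$ (a single Fourier coefficient confirms this), and Theorem~\ref{th:j2} applied to $\Gamma = \Orth^+(L)$ then yields that $M_*(\Orth^+(L))$ is freely generated by the seven forms and that $\Orth^+(L)$ is generated by the corresponding reflections.

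I expect the main obstacle to be the divisor analysis of $J_0$ --- specifically, confirming that the ten Borcherds products $b_1,\dots,b_{10}$ together account for every reflection mirror of $\Orth^+(L)$ not already covered by $\Phi_{40}$ (equivalently, that the reflective vectors of $L'$ of norm $1$ are distributed among exactly those ten non-isotropic classes of $L'/L$), and that the resulting divisor is reduced. Once this is in place, the remainder is a weight count together with the bookkeeping of the $\Orth^+(L)$-action on the $e_i$ already recorded in Remark~\ref{rk:U+U2+D4_f}, and verifying the nonzero constant $c$ is a one-line Fourier-coefficient computation.
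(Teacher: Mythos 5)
Your proposal matches the paper's proof in all essentials: both establish the $\Orth^+(L)$-invariance of the seven generators via the transitive permutation action on $e_1,\dots,e_5$ from Remark \ref{rk:U+U2+D4_f}, deduce algebraic independence from the free structure of Theorem \ref{th:U+U2+D4_d}, and identify the weight-$80$ Jacobian with $\Phi_{40}\prod_{i=1}^{10}b_i$ before invoking Theorem \ref{th:j2} (your divisor analysis of $J_0$ is a correct elaboration of what the paper asserts without proof). The only minor divergences are that the paper derives the invariance of $m_2$ from Scheithauer's lift of the constant function in $M_0(\Gamma_0(2))$ (using that $L$ has squarefree level $2$) rather than from uniqueness of the rational Weil invariant with nonzero $e_0$-component, and that it proves algebraic independence by expressing the generators in terms of those of Theorem \ref{th:U+U2+D4_d} (only $\mathcal{E}_{10}$ requiring real computation) rather than your transcendence-degree count --- whose unproved step, that $\mathcal{E}_6$ and $\mathcal{E}_{10}$ are transcendental over the weight-$\le 4$ subring, is indeed covered by your fallback of verifying $J\neq 0$ from a Fourier coefficient.
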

\begin{proof} In \cite{Sch06} Scheithauer constructed a lifting from scalar modular forms on congruence subgroups to vector-valued modular forms. Since $L$ has squarefree level $2$, Scheithauer's lift sends modular forms in $M_k(\Gamma_0(2))$ to modular forms in $M_k(\rho)$ which are invariant under all automorphisms of $L'/L$. The unique Weil invariant can be constructed as Scheithauer's lift of the constant $1$, which shows that $m_2$ is modular on $\Orth^+(L)$ without character. The modularity of the other generators in the claim under $\Orth^+(L)$ follows from Remark \ref{rk:U+U2+D4_f}.

The algebraic independence of these forms can be shown by expressing them in terms of the generators of Theorem \ref{th:U+U2+D4_d} (and only $\mathcal{E}_{10}$ requires significant computation). Therefore the Jacobian $J$ is nonzero of weight $80$. Since the product $J_0 = \Phi_{40} \prod_{i=1}^{10} b_i$ whose divisor consists of a simple zero on all mirrors of reflections in $\Orth^+(L)$ also has weight $80$, it follows from Theorem \ref{th:j} that $J = J_0$ up to a constant multiple. The claim follows from Theorem \ref{th:j2}.
\end{proof}

\begin{remark} All of the generators of weight greater than two can be replaced by the standard Eisenstein series: $$M_*(\Orth^+(L)) = \mathbb{C}[m_2, \mathcal{E}_6, \mathcal{E}_8, \mathcal{E}_{10}, \mathcal{E}_{12}, \mathcal{E}_{16}, \mathcal{E}_{20}].$$ This can be proved computationally, by expressing $\mathcal{E}_k$ in terms of the generators in Theorem \ref{th:U+U2+D4_f}.
\end{remark}

\section{The \texorpdfstring{$2U(2)\oplus R$}{} tower}\label{sec:4}

In this section we will compute the algebras of modular forms associated to the discriminant kernels of the tower of lattices $$U(2) \oplus S_8 \subseteq 2U(2) \oplus A_2 \subseteq 2U(2) \oplus A_3 \subseteq 2U(2) \oplus D_4.$$ Here $S_8$ is the signature $(2, 1)$ lattice $\mathbb{Z}^3$ with Gram matrix $\begin{psmallmatrix} -2 & 1 & 1 \\ 1 & 2 & 1 \\ 1 & 1 & 2 \end{psmallmatrix}$ and genus symbol $8_3^{+1}$ (cf. the appendix of the extended version of \cite{BEF}, where a different basis is used). Note that $S_8$ is isotropic but is not split by any $U(N)$; nevertheless it fits conveniently into the $2U(2) \oplus R$ tower, as it embeds into $2U(2) \oplus A_2$.

\subsection{Modular forms for \texorpdfstring{$U(2)\oplus S_8$}{}}

We will show that the algebra of modular forms for the discriminant kernel of $U(2) \oplus S_8$, i.e. the lattice $\mathbb{Z}^5$ with Gram matrix $\begin{psmallmatrix} 0 & 0 & 0 & 0 & 2 \\ 0 & -2 & 1 & 1 & 0 \\ 0 & 1 & 2 & 1 & 0 \\ 0 & 1 & 1 & 2 & 0 \\ 2 & 0 & 0 & 0 & 0 \end{psmallmatrix},$ is freely generated by modular forms of weights $2, 2, 2, 3$. First we compute some Borcherds products:

\begin{table}[htbp]
\centering
\caption{Some Borcherds products for $U(2) \oplus S_8$}
\begin{tabular}{l*{3}{c}r}
\hline
Name & Weight & Principal part\\
\hline
$b_{1, S_8}$ & $2$ & $4 e_0 + q^{-1/4} e_{(0,1/4,1/4,1/4,1/2)} + q^{-1/4} e_{(0, 3/4, 3/4, 3/4, 1/2)}$ \\
$b_{2, S_8}$ & $2$ & $4 e_0 +  q^{-1/4} e_{(1/2,1/4,1/4,1/4,0)} + q^{-1/4} e_{(1/2, 3/4, 3/4, 3/4, 0)}$ \\
$b_{3, S_8}$ & $2$ & $4 e_0 + q^{-1/2} e_{(1/2, 0, 0, 0, 1/2)}$ \\
$b_{4, S_8}$ & $2$ & $4 e_0 + q^{-1/2} e_{(1/2, 1/2, 1/2, 1/2, 1/2)}$ \\
$b_{5, S_8}$ & $2$ & $4 e_0 + q^{-5/16} e_{(0,7/8,3/8,3/8,0)} + q^{-5/16} e_{(0, 1/8, 5/8, 5/8, 0)}$ \\
$b_{6, S_8}$ & $2$ & $4 e_0 + q^{-5/16} e_{(0,7/8,3/8,3/8,1/2)} + q^{-5/16} e_{(0, 1/8, 5/8, 5/8, 1/2)}$ \\
$b_{7, S_8}$ & $2$ & $4 e_0 + q^{-5/16} e_{(1/2,7/8,3/8,3/8,0)} + q^{-5/16} e_{(1/2, 1/8, 5/8, 5/8, 0)}$ \\
$b_{8, S_8}$ & $2$ & $4 e_0 + q^{-5/16} e_{(1/2,5/8,1/8,1/8,1/2)} + q^{-5/16} e_{(1/2, 3/8, 7/8, 7/8, 1/2)}$ \\
$\psi_{S_8}$ & $3$ & $6 e_0 + q^{-1/4} e_{(0, 1/4, 1/4, 1/4, 0)} + q^{-1/4} e_{(0, 3/4, 3/4, 3/4, 0)}$ \\
$\Phi_{12, S_8}$ & $12$ & $24e_0 + q^{-1} e_0$ \\
\hline
\end{tabular}
\end{table}

We omit $S_8$ from the index as long as there is no risk of confusion.

\begin{theorem} The eight Borcherds products $b_1,...,b_8$ of weight two span a three-dimensional space and they satisfy the three-term relations $$b_1 = b_2 + b_3 = b_4 - b_2 = b_5 - b_6 = b_7 - b_8 \; \text{and} \; b_2 = b_5 - b_7.$$ All of the products $b_1,...,b_8$ and $\psi$ can be realized as additive lifts and are therefore modular without character on the discriminant kernel of $U(2) \oplus S_8$.  Any three linearly independent weight two products (e.g. $b_1, b_2, b_5$) are algebraically independent and together with $\psi$ they freely generate the algebra of modular forms: $$M_*(\widetilde \Orth^+(L)) = \mathbb{C}[b_1, b_2, b_5, \psi].$$ The Jacobian of the generators is a nonzero multiple of the Borcherds product $\Phi_{12}$.
\end{theorem}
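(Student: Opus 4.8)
The plan is to realize all of the Borcherds products $b_1,\dots,b_8$ and $\psi$ as additive theta lifts and then to apply the Jacobian criterion (Theorems \ref{th:j} and \ref{th:j2}), following the pattern of the preceding section. First I would compute $\dim M_k(\rho)$ for small half-integral $k$ from the Riemann--Roch formula; since weight $3/2$ admits no vector-valued Eisenstein series, the expectation is that $M_{3/2}(\rho)=S_{3/2}(\rho)$ is three-dimensional and that there is a unique cusp form of weight $5/2$. I would then write down three linearly independent additive lifts of weight $2$ (the lifts of a basis of $M_{3/2}(\rho)$) and the weight $3$ additive lift of the weight $5/2$ cusp form. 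By inspecting the principal parts one sees that each $b_j$, as well as $\psi$, has divisor a single Heegner divisor $H(m_j,\beta_j)$; since the discriminant kernel fixes every coset of $L'/L$ it preserves this divisor, so each $b_j$ and $\psi$ is modular on $\widetilde{\Orth}^+(L)$ with a finite-order character. Comparing the first few Fourier coefficients of the $b_j$ and of $\psi$ with those of the explicit additive lifts -- the input forms lie in the finite-dimensional spaces $M_{3/2}(\rho)$ and $M_{5/2}(\rho)$, so boundedly many coefficients pin them down -- I would identify each $b_j$ with one of the weight $2$ lifts and $\psi$ with the weight $3$ cuspidal lift. This simultaneously yields the three-term relations of the statement (and the assertion that $b_1,\dots,b_8$ span a three-dimensional space) and shows that $b_1,\dots,b_8,\psi$ are additive lifts, hence modular \emph{without} character on $\widetilde{\Orth}^+(L)$ by Theorem 14.4 of \cite{Bo}.

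Next I would form the Jacobian $J=J(b_1,b_2,b_5,\psi)$. By Theorem \ref{th:j} it is a cusp form of weight $n+(2+2+2+3)=12$ for $\widetilde{\Orth}^+(L)$ with the determinant character, and it vanishes on the mirror of every reflection in $\widetilde{\Orth}^+(L)$. On the other hand $\Phi_{12}$ has divisor $H(1,0)=\bigcup_{r\in L,\ \latt{r,r}=2}\cD_r(L)$ with multiplicity one, and by the remark preceding Theorem \ref{th:j} the reflections $\sigma_r$ with $r\in L$, $\latt{r,r}=2$ are exactly the reflections lying in $\widetilde{\Orth}^+(L)$; hence $J/\Phi_{12}$ is a holomorphic modular form of weight zero, i.e. a constant $c$. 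To see $c\neq0$ -- equivalently, that $b_1,b_2,b_5,\psi$ are algebraically independent by Theorem \ref{th:j}(2) -- I would compute a single nonzero Fourier coefficient of $J$ from the expansions obtained above, so that $J=c\,\Phi_{12}$ with $c\neq0$.

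Finally, since $\div\Phi_{12}=H(1,0)$ consists of simple zeros along precisely the mirrors of reflections in $\widetilde{\Orth}^+(L)$ (the containment in one direction is Theorem \ref{th:j}(3) together with $J=c\,\Phi_{12}$, and in the other direction every component $\cD_r(L)$ of $H(1,0)$ has $\sigma_r\in\widetilde{\Orth}^+(L)$), Theorem \ref{th:j2} applies with $\Gamma=\widetilde{\Orth}^+(L)$ and the generators $b_1,b_2,b_5,\psi$. It gives that $M_*(\widetilde{\Orth}^+(L))$ is freely generated by $b_1,b_2,b_5,\psi$ and that $\widetilde{\Orth}^+(L)$ is generated by reflections. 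As any three linearly independent weight $2$ products span the same space as $b_1,b_2,b_5$, the same conclusion holds with any such triple in place of $b_1,b_2,b_5$.

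The main obstacle is the Fourier-expansion computation in the first step. Because $S_8$ is isotropic but is not split by a hyperbolic plane, $L=U(2)\oplus S_8$ is not of the form $U(N_1)\oplus U(N_2)\oplus L_0$ with $L_0$ positive definite, so the Weyl vectors needed to write down the product expansions of the $b_j$ are not furnished by Theorem 10.4 of \cite{Bo} and must instead be obtained recursively by quasi-pullback -- for instance along the embedding $U(2)\oplus S_8\hookrightarrow 2U(2)\oplus A_2$ used in this section, after which a second hyperbolic plane is available. One should also check at the outset whether $U(2)\oplus S_8$ splits a unimodular hyperbolic plane over $\mathbb{Z}$: if so, the additive lift on holomorphic forms is injective by Bruinier's theorem and the three weight $2$ lifts are automatically independent; otherwise their independence is read off directly from leading Fourier coefficients.
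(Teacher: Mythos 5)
Your overall strategy -- build the Jacobian $J(b_1,b_2,b_5,\psi)$, identify it with $\Phi_{12}$ by weight and divisor considerations, and invoke Theorems \ref{th:j} and \ref{th:j2} -- is the same as the paper's, and most of the details (the weight count $3+9=12$, the identification of the mirrors of reflections in the discriminant kernel with the components of $H(1,0)=\operatorname{div}\Phi_{12}$, the need to compute Weyl vectors by quasi-pullback because $U(2)\oplus S_8$ is not split by two hyperbolic planes) are correct. However, there is a genuine gap in the order of your argument. You want to establish \emph{first} that each $b_j$ and $\psi$ equals an additive lift, ``by comparing the first few Fourier coefficients,'' and to deduce from this that they have trivial character on $\widetilde\Orth^+(L)$ before running the Jacobian argument on that group. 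But a finite Fourier comparison only proves equality of two modular forms if both are known to lie in a common finite-dimensional space with an effective bound on the number of determining coefficients. At that stage you know the space of weight-$2$ additive lifts is (three-)dimensional, but you do not yet know $\dim M_2(\widetilde\Orth^+(L))$, nor even that $b_j$ transforms with trivial character on $\widetilde\Orth^+(L)$ -- and if its character were nontrivial, $b_j$ and the lift would not even live in the same space, so agreement of finitely many coefficients proves nothing. Your justification (``the input forms lie in finite-dimensional spaces'') pins down the candidate lift, not the product.

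The paper resolves this by reversing the two steps. One first observes that each $b_j$ has trivial character on the subgroup $\widetilde\Orth_r(L)$ generated by the reflections associated to $\Phi_{12}$: this is read off from the divisors, since $b_j$ has vanishing order zero along every such mirror and the character value of a reflection is the parity of the vanishing order along its mirror. The Jacobian criterion then gives $M_*(\widetilde\Orth_r(L))=\mathbb{C}[b_1,b_2,b_5,\psi]$, in particular $\dim M_2(\widetilde\Orth_r(L))=3$. Only now does one compare with the three-dimensional space of weight-$2$ additive lifts (the lift is \emph{not} injective here: $M_{3/2}(\rho)$ is four-dimensional, spanned by combinations of mock Eisenstein series rather than cusp forms as you guessed, with a one-dimensional kernel) to conclude that every $b_j$ is a lift, hence modular without character on the full discriminant kernel, hence $\widetilde\Orth_r(L)=\widetilde\Orth^+(L)$. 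The relations such as $b_1=b_2+b_3$ also become finite checks only at this point. Your argument can be repaired by adopting this ordering; as written, the first step is circular.
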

\begin{proof} All of the weight two products transform without character on $\widetilde \Orth_r(L)$, the group generated by reflections associated to $\Phi_{12}$. (This can be read off of their divisors.) By computing the first Fourier coefficients we find that the Jacobian $J = J(b_1, b_2, b_5, \psi)$ is not identically zero, and it also has weight $12$. As in the previous sections, Theorems \ref{th:j} and \ref{th:j2} imply that $J$ equals $\Phi_{12}$ up to a constant multiple, and that $b_1, b_2, b_5, \psi$ freely generate the algebra of modular forms for $\widetilde \Orth_r(L)$. Computing dimensions of the spaces of additive lifts (taking into account that the additive lift is not injective) shows that all of the generators arise as additive lifts, from which we conclude that the generators are modular under the discriminant kernel $\widetilde \Orth^+(L)$ and finally that $\widetilde \Orth^+(L)$ is generated by reflections.
\end{proof}

\begin{remark} The space of modular forms of weight $3/2$ for the Weil representation attached to $L$ is spanned by Eisenstein series. There are six isotropic vectors in the discriminant form $L'/L$. In addition to $0$, with respect to the Gram matrix fixed above, they are represented by $$v_1 = (0, 1/2, 1/2, 1/2, 0), \; v_2 = (0, 0, 0, 0, 1/2), \; v_3 = (0, 1/2, 1/2, 1/2, 1/2),$$ $$v_4 = (1/2, 0, 0, 0, 0), \; v_5 = (1/2, 1/2, 1/2, 1/2, 0).$$ Attached to any of these cosets one can associate a mock Eisenstein series $E_0, E_{v_i}$. Computing their shadows shows that the space of holomorphic forms of weight $3/2$ is spanned by the linear combinations $$E_0 - E_{v_1}, \; E_0 - E_{v_2} - E_{v_5}, \; E_0 - E_{v_3} - E_{v_4}, \; E_0 - E_{v_3} - E_{v_5}.$$ The additive theta lift to orthogonal modular forms of weight two has a kernel: $$\mathrm{ker}\Big( M_{3/2}(\rho) \longrightarrow \mathrm{Maass}_2(\widetilde \Orth^+(L)) \Big) = \mathrm{span}\Big( 3 E_0 - E_{v_1} - E_{v_2} - E_{v_3} - E_{v_4} - E_{v_5} \Big).$$ By computing Fourier expansions one can show that (up to a sign ambiguity) the weight two Borcherds products are the following explicit additive lifts:
\begin{align*}
&b_1 = \mathrm{Lift}(E_{v_4} - E_{v_5}),& & b_2 = \mathrm{Lift}(E_{v_2} - E_{v_3}),& \\
&b_3 = \mathrm{Lift}(E_{v_3} + E_{v_4} - E_{v_2} - E_{v_5}),&   & b_4 = \mathrm{Lift}(E_{v_2} + E_{v_4} - E_{v_3} - E_{v_5}),&\\
&b_5 = \mathrm{Lift}(E_{v_1} - E_{v_3} - E_{v_5}),& & b_6 = \mathrm{Lift}(E_{v_1} - E_{v_3} - E_{v_4}), & \\
&b_7 = \mathrm{Lift}(E_{v_1} - E_{v_2} - E_{v_5}),& & b_8 = \mathrm{Lift}(E_{v_1} - E_{v_2} - E_{v_4}).&
\end{align*}
(Note that the input form in each of the above lifts is holomorphic.)

The action of $\Orth^+(L)$ maps Borcherds products to other products with rational quadratic divisors of the same norm. In particular it permutes the products $b_5,b_6,b_7,b_8$. On the other hand, any $g \in \Orth^+(L)$ sends the lift of the Eisenstein series $E_v$ to the lift of $E_{g \cdot v}$. Considering the expressions for $b_5,...,b_8$ as additive lifts shows that $\Orth^+(L)$ permutes $b_5,...,b_8$ without multiplication by any roots of unity and that the coset $v_1 \in L'/L$ is invariant under $\Orth^+(L)$. (This can also be shown directly.) In particular, $$\mathrm{Lift}(-3 E_0 + 3E_{v_1}) = \mathrm{Lift}(2E_{v_1} - E_{v_2} - E_{v_3} - E_{v_4} - E_{v_5}) = b_5 + b_8 = b_6 + b_7$$ is modular under the entire group $\Orth^+(L)$.
\end{remark}

\begin{theorem} The graded ring of modular forms for the full integral orthogonal group of $U(2) \oplus S_8$ is freely generated in weights $2, 4, 6, 8$: $$M_*(\Orth^+(L)) = \mathbb{C}[p_1, p_2, \psi^2, p_4], \; \text{where} \; p_k = b_5^k + b_6^k + b_7^k + b_8^k.$$ The Jacobian of the generators is a constant multiple of $b_1 b_2 b_3 b_4 \psi \Phi_{12}$. 
\end{theorem}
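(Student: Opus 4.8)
The plan is to follow the pattern of the preceding theorems: exhibit $p_1, p_2, \psi^2, p_4$ as algebraically independent modular forms with trivial character on $\Orth^+(L)$, identify their modular Jacobian with $J_0 := b_1 b_2 b_3 b_4 \psi \Phi_{12}$ by comparing weights and divisors, and then invoke Theorem \ref{th:j2}. The modularity without character is immediate for the power sums $p_k = b_5^k + b_6^k + b_7^k + b_8^k$: by the remark preceding the theorem, $\Orth^+(L)$ permutes $b_5, \dots, b_8$ among themselves without multiplication by roots of unity, so each $p_k$ is invariant. For $\psi^2$ I would argue that $\psi$, being an additive theta lift of a vector-valued form that is (up to scalar) the unique one of its weight and symmetry type with rational Fourier coefficients, is fixed by the image of $\Orth^+(L)$ in $\Orth(L'/L)$ up to a character of order at most two; hence $\psi^2$ is modular on $\Orth^+(L)$ without character.

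For algebraic independence I would use the three-term relations, which give $b_6 = b_5 - b_1$, $b_7 = b_5 - b_2$, $b_8 = b_5 - b_1 - b_2$, to write $p_1, p_2, p_4$ as polynomials in $b_1, b_2, b_5$, and then apply the change-of-variables property of the modular Jacobian:
\[
J(p_1, p_2, \psi^2, p_4) = \pm\, 2\psi \cdot \det\!\left(\frac{\partial(p_1, p_2, p_4)}{\partial(b_1, b_2, b_5)}\right) \cdot J(b_1, b_2, b_5, \psi),
\]
where $J(b_1, b_2, b_5, \psi)$ is a nonzero multiple of $\Phi_{12}$ by the structure theorem for $M_*(\widetilde\Orth^+(U(2)\oplus S_8))$ proved above. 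The remaining $3\times 3$ determinant is a weight-$8$ form which one checks is nonzero, e.g. by comparing a few low-order Fourier coefficients (and matching the claimed factorization of $J_0$, it should be a constant multiple of $b_1 b_2 b_3 b_4$). Thus $J := J(p_1, p_2, \psi^2, p_4) \neq 0$, and by Theorem \ref{th:j} it is a cusp form of weight $3 + (2 + 4 + 6 + 8) = 23$ for $\Orth^+(L)$ with the determinant character, vanishing on the mirror of every reflection in $\Orth^+(L)$.

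It then remains to identify $J$ with $J_0$. I would check that each divisorial vector of $b_1, b_2, b_3, b_4, \psi$ and $\Phi_{12}$ yields a reflection in $\Orth^+(L)$: the vectors occurring with $q^{-1/4}$ in a principal part have norm $1/2$ and order $4$ in $L'/L$, those occurring with $q^{-1/2}$ have norm $1$ and order $2$, and $\Phi_{12}$ contributes the norm-$2$ vectors of $L$ (whose reflections already lie in $\widetilde\Orth^+(L)$); one further checks that these six divisors are reduced and pairwise distinct, since vectors of distinct norm, or in inequivalent discriminant classes, cannot be proportional. Hence $\div J_0$ is reduced and supported on mirrors of reflections in $\Orth^+(L)$, so by Theorem \ref{th:j}(3) $J_0 \mid J$; since $\mathrm{wt}(J_0) = 2+2+2+2+3+12 = 23 = \mathrm{wt}(J)$, we conclude $J = c J_0$ for a nonzero constant $c$. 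Then $\div J$ is exactly the union of the mirrors of reflections in $\Orth^+(L)$, each of multiplicity one, so Theorem \ref{th:j2} applies and gives $M_*(\Orth^+(L)) = \CC[p_1, p_2, \psi^2, p_4]$, with $\Orth^+(L)$ generated by reflections.

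The step I expect to be the main obstacle is establishing that $\psi^2$ is modular on the full group $\Orth^+(L)$ without character, equivalently that the character through which $\psi$ transforms on $\Orth^+(L)$ has order dividing two, since, unlike the $p_k$, it is not manifestly invariant and requires controlling the action of $\Orth^+(L)$ on the weight-three Borcherds product (or on its additive-lift input). The remaining ingredients are routine: the Fourier-coefficient check for the non-vanishing of the $3\times 3$ Jacobian, and the bookkeeping with norms, orders, and discriminant classes for the divisor identification.
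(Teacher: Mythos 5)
Your proposal is correct and follows essentially the same route as the paper: modularity of the power sums from the permutation action of $\Orth^+(L)$ on $b_5,\dots,b_8$, modularity of $\psi^2$ from the action on the additive-lift input form, algebraic independence via the three-term relations and the already-established freeness of $M_*(\widetilde\Orth^+(L))$, and identification of the Jacobian with $b_1b_2b_3b_4\psi\Phi_{12}$ by comparing weights and divisors before invoking Theorem \ref{th:j2}. The only remark worth making is that your $3\times 3$ determinant is a polynomial in the algebraically independent forms $b_1,b_2,b_5$, so its nonvanishing can be checked purely formally (e.g.\ by specializing the variables) rather than by computing Fourier coefficients.
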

Note that we must omit the third power sum $\sum_{i=5}^8 b_i^3$ from the generators due to the relation $$\Big( \sum_{i=5}^8 b_i \Big)^3 - 6 \Big( \sum_{i=5}^8 b_i \Big) \Big( \sum_{i=5}^8 b_i^2 \Big) + 8 \sum_{i=5}^8 b_i^3 = 0.$$ (This follows from the relation $b_5 + b_8 = b_6 + b_7$.)
\begin{proof} Using $b_5 + b_8 = b_6 + b_7$ and the fact that $b_5, b_6, b_7, \psi$ are algebraically independent, it is straightforward to show that the generators in the claim are also algebraically independent. The previous remark implies that all of the power sums are modular under $\Orth^+(L)$. We obtain the transformation of $\psi^2$ under $\Orth^+(L)$ using the action of $\Orth^+(L)$ on the input form into $\psi$ in the additive lift.
\end{proof}

\begin{remark} If we define $\mathcal{E}_2$ as the lift of the holomorphic invariant Eisenstein series $E_0 - E_{v_1}$ then we can also generate the algebra of modular forms with only Eisenstein series: $$M_*(\Orth^+(L)) = \mathbb{C}[\mathcal{E}_2, \mathcal{E}_4, \mathcal{E}_6, \mathcal{E}_8].$$
\end{remark}

\subsection{Modular forms for \texorpdfstring{$2U(2)\oplus A_2$}{}}

In this section we will compute the ring of modular forms for the discriminant kernel of the lattice $L = 2U(2) \oplus A_2$ using a pullback map to $U(2) \oplus S_8$. Using the Gram matrices $\begin{psmallmatrix} -2 & 1 & 1 \\ 1 & 2 & 1 \\ 1 & 1 & 2 \end{psmallmatrix}$ for $S_8$ and $\begin{psmallmatrix} 0 & 0 & 0 & 2 \\ 0 & 2 & -1 & 0 \\ 0 & -1 & 2 & 0 \\ 2 & 0 & 0 & 0 \end{psmallmatrix}$ for $U(2) \oplus A_2$ one can check that $$(x, y, z) \in S_8 \mapsto (-x+y+z, x - y, z - x, 2x) \in U(2) \oplus A_2$$ is an isometric embedding. By acting trivially on the extra copy of $U(2)$ this extends to an embedding of the modular varieties. 

This lattice admits $15$ holomorphic Borcherds products of weight $2$. Six have principal parts of the form $$4 e_0 + q^{-1/2} e_v, \; \; \mathrm{ord}(v) = 2$$ and the remaining nine have principal parts of the form $$4 e_0 + q^{-1/3} e_v + q^{-1/3} e_{-v}, \; \; \mathrm{ord}(v) = 6.$$ Under the embedding above, the variety associated to $U(2) \oplus S_8$ embeds as the divisor of the Borcherds product whose principal part is $$4 e_0 + q^{-1/3} e_{(0, 1/2, 1/3, 2/3, 0, 0)} + q^{-1/3} e_{(0, 1/2, 2/3, 1/3, 0, 0)}.$$ In addition there is a Borcherds product $\psi$ of weight $3$ with principal part $$6 e_0 + q^{-1/3} e_{(0, 0, 1/3, 2/3, 0, 0)} + q^{-1/3} e_{(0, 0, 2/3, 1/3, 0, 0)}$$ and a product $\Phi_{15}$ of weight $15$ whose principal part is $30 e_0 + q^{-1} e_0$.

\begin{theorem}
\noindent
\begin{itemize} 
\item[(1)] The 15 Borcherds products of weight $2$ span a four-dimensional space. All of the products of weight $2$ and $3$ can also be constructed as additive lifts. 
\item[(2)] The discriminant kernel $\widetilde \Orth^+(L)$ is generated by reflections. 
\item[(3)]  If $b_1, b_2, b_3, b_4$ are any linearly independent products of weight $2$, then they are algebraically independent and together with the weight $3$ product $\psi$ they freely generate the ring of modular forms for the discriminant kernel: $$M_*(\widetilde \Orth^+(L)) = \mathbb{C}[b_1, b_2, b_3, b_4, \psi].$$ 
\end{itemize}
\end{theorem}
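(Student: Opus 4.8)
The proof follows the template of \S\ref{sec:3} and the previous subsection: transport the structure theorem for $U(2)\oplus S_8$ along the embedding above, and then invoke the Jacobian criterion (Theorems~\ref{th:j} and~\ref{th:j2}). Write $\widetilde\Orth_r(L)$ for the subgroup of $\widetilde\Orth^+(L)$ generated by the reflections whose mirrors lie in $\mathrm{div}(\Phi_{15})$, i.e.\ all reflections in the discriminant kernel.

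\textbf{Step 1 (trivial characters and pullbacks).} The divisor of each weight-two product listed, and of $\psi$, is a Heegner divisor $H(m,v)$ with $0<m<1$ and $v\neq 0$. Since $m$ is not the square of a rational number, $H(m,v)$ shares no irreducible component with $H(1,0)=\mathrm{div}(\Phi_{15})$, which contains all mirrors of reflections in the discriminant kernel; hence none of these products vanishes on such a mirror $\mathcal{D}_s$, so each generating reflection of $\widetilde\Orth_r(L)$ (which permutes the components of $H(m,v)$) acts trivially on each product, and all fifteen weight-two products and $\psi$ are modular without character on $\widetilde\Orth_r(L)$. Next I would compute the pullback homomorphism $P\colon M_*(\widetilde\Orth_r(L))\to M_*(\widetilde\Orth^+(U(2)\oplus S_8))=\CC[b_{1,S_8},b_{2,S_8},b_{5,S_8},\psi_{S_8}]$ along the embedding; as before, pullbacks of theta lifts and of Borcherds products are again theta lifts and Borcherds products on $U(2)\oplus S_8$, so they can be read off from the input data. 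The distinguished weight-two product $b_0$ cutting out the image of the embedding satisfies $P(b_0)=0$; the pullbacks of the other fourteen products are scalar multiples (with multiplicities) of the eight weight-two products for $U(2)\oplus S_8$, which span the $3$-dimensional space $M_2(\widetilde\Orth^+(U(2)\oplus S_8))$; and $P(\psi)$ is a nonzero multiple of $\psi_{S_8}$. Any weight-two form in $\ker P$ vanishes on the image of the embedding, hence is a constant multiple of $b_0$ by Koecher's principle, so the fifteen products span a space of dimension exactly $3+1=4$.

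\textbf{Step 2 (algebraic independence and the Jacobian).} Fix a basis $b_1,b_2,b_3,b_4$ of this $4$-dimensional space with $b_4=b_0$; then $P(b_1),P(b_2),P(b_3)$ form a basis of $M_2(\widetilde\Orth^+(U(2)\oplus S_8))$, and together with $P(\psi)$ they are the free generators of $\CC[b_{1,S_8},b_{2,S_8},b_{5,S_8},\psi_{S_8}]$, hence algebraically independent. Since $b_4$ vanishes to first order on the image of the embedding and the ring of modular forms is a domain, the standard restriction argument upgrades this to algebraic independence of $b_1,b_2,b_3,b_4,\psi$. By Theorem~\ref{th:j} their Jacobian $J=J(b_1,b_2,b_3,b_4,\psi)$ is a nonzero cusp form of weight $n+(2+2+2+2+3)=15$ for the determinant character that vanishes on every mirror of a reflection in $\widetilde\Orth_r(L)$, i.e.\ on $H(1,0)$. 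Hence $J/\Phi_{15}$ is holomorphic of weight $0$, so a nonzero constant, and $J=c\,\Phi_{15}$ vanishes to order exactly one on each such mirror. Theorem~\ref{th:j2} then gives $M_*(\widetilde\Orth_r(L))=\CC[b_1,b_2,b_3,b_4,\psi]$ and that $\widetilde\Orth_r(L)$ is generated by reflections.

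\textbf{Step 3 (descent to the discriminant kernel).} From the structure theorem just obtained, $\dim M_2(\widetilde\Orth_r(L))=4$ and $\dim M_3(\widetilde\Orth_r(L))=1$. I would compute the Hilbert series of $M_*(\rho_L)$ for $L=2U(2)\oplus A_2$, read off $\dim M_1(\rho_L)$ and $\dim M_2(\rho_L)$, and determine the dimensions of the images of the additive theta lift in orthogonal weights $2$ and $3$. A dimension match forces $M_2(\widetilde\Orth_r(L))=\mathrm{Maass}_2(\widetilde\Orth_r(L))$ and $M_3(\widetilde\Orth_r(L))=\mathrm{Maass}_3(\widetilde\Orth_r(L))$, so all fifteen weight-two products and $\psi$ are additive lifts and are therefore modular without character under the full discriminant kernel; hence $\widetilde\Orth_r(L)=\widetilde\Orth^+(L)$ is generated by reflections, and $M_*(\widetilde\Orth^+(L))=\CC[b_1,b_2,b_3,b_4,\psi]$ for any four linearly independent weight-two products, giving (1), (2), (3). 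The main obstacle is precisely this dimension count: the additive theta lift is \emph{not} injective for $2U(2)\oplus A_2$, which does not split a unimodular hyperbolic plane, so Bruinier's criterion does not apply and one must compute the kernel of the lift in weights $1$ and $2$ explicitly from the vector-valued modular forms for $\rho_L$. The remaining steps are routine once the structure of $M_*(\widetilde\Orth^+(U(2)\oplus S_8))$ is in hand, the only extra labour being the mechanical determination of the fifteen pullbacks.
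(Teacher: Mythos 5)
Your proposal is correct and follows essentially the same route as the paper: pull back along the embedding of $U(2)\oplus S_8$ to get algebraic independence, match the weight-$15$ Jacobian with $\Phi_{15}$ via Theorems \ref{th:j} and \ref{th:j2}, and then descend from $\widetilde\Orth_r(L)$ to $\widetilde\Orth^+(L)$ by the additive-lift dimension count (the paper indeed uses the five-dimensional space of weight-one vector-valued forms lifting to a four-dimensional Maass space, exactly the non-injectivity issue you flag). Your Step 1 justification of the four-dimensional span via $\ker P$ and Koecher's principle is a slightly more explicit version of what the paper asserts, not a different argument.
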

\begin{proof} By computing the theta-contraction of the inputs into the $14$ Borcherds products of weight $2$ that do not vanish along $U(2) \oplus S_8$, we find all $8$ of the weight two products on $U(2) \oplus S_8$; and similarly the weight three product on $2U(2) \oplus A_2$ pulls back to the weight three product on $U(2) \oplus S_8$. Any products that pull back to generators of the algebra of modular forms of $U(2) \oplus S_8$ are algebraically independent; and if we add the product with a simple zero along $U(2) \oplus S_8$ then the set remains algebraically independent. The Jacobian $J$ of this set of products of weights $2,2,2,2,3$ (which we label $b_1, b_2, b_3, b_4, \psi$) has weight $15$, matching the product $\Phi_{15}$ whose divisor consists exactly of simple zeros of mirrors of reflections in the discriminant kernel. Using Theorems \ref{th:j} and \ref{th:j2} we obtain $J = \Phi_{15}$ up to a multiple and $M_*(\widetilde \Orth_r(L)) = \mathbb{C}[b_1,b_2,b_3,b_4,\psi]$. The Weil representation attached to $L$ admits a five-dimensional space of modular forms of weight $1$ which map to a four-dimensional space of forms under the additive theta lift; and comparing dimensions shows that all of the products of weight two are additive lifts and are therefore modular under the full discriminant kernel without character. Similarly, there is a vector-valued cusp form of weight two whose additive theta lift is nonzero, and therefore equals $\psi$ (up to a multiple).
\end{proof}

\begin{remark} The weight two cusp form for $\rho_L$ whose theta lift is $\psi$ takes the form $$ \eta(\tau)^4 \sum_{\substack{v \in L'/L \\ Q(v) = -1/6 + \mathbb{Z}}} \varepsilon(v) \mathfrak{e}_v$$ with certain coefficients $\varepsilon(v) \in \{-1, 1\}$, where $\eta(\tau) = q^{1/24} \prod_{n=1}^{\infty} (1 - q^n)$.
\end{remark}

We will now consider the algebra associated to the maximal reflection group $\Orth_r(L)$, generated by all reflections in $\Orth^+(L)$. (It will turn out that $\Orth^+(L)$ is \emph{not} generated by reflections: the map that swaps the two copies of $U(2)$ is not contained in $\Orth_r(L)$.) Clearly the Eisenstein series $\mathcal{E}_4, \mathcal{E}_6$ of weights $4$ and $6$ are contained in this ring. The Weil representation attached to $L$ admits a one-dimensional space of cusp forms of weight three, spanned by the form $$f(\tau) =  \Big( \eta(\tau/3)^3 \eta(\tau)^3 + 3 \eta(\tau)^3 \eta(3 \tau)^3 \Big) \sum_{\substack{v \in L'/L \\ Q(v) = -1/6 + \mathbb{Z}}} \varepsilon(v) \mathfrak{e}_v + \eta(\tau)^3 \eta(3\tau)^3 \sum_{\substack{v \in L'/L \\ Q(v) = -1/2 + \mathbb{Z}}} \varepsilon(v) \mathfrak{e}_v$$ for some coefficients $\varepsilon(v) \in \{\pm 1\}$. This form is preserved by the action of $\Orth_r(L)$ on $\mathbb{C}[L'/L]$. (It is not preserved under swapping the two copies of $U(2)$, which instead sends $f$ to $-f$!) By taking the Serre derivative of $f$, we obtain a cusp form of weight $5$, $$\vartheta f(\tau) = \frac{1}{2\pi i} f'(\tau) - \frac{1}{4} f(\tau) E_2(\tau), \; \; E_2(\tau) = 1 - 24 \sum_{n=1}^{\infty} \sigma_1(n) q^n,$$ which is also invariant under $\Orth_r(L)$ (and which transforms under $\Orth^+(L)$ with the same quadratic character). We let $p_4 \in S_4(\Orth_r(L))$ and $p_6 \in S_6(\Orth_r(L))$ be the theta lifts of $f$ and $\vartheta f$.

\begin{theorem} The algebra of modular forms for $\Orth_r(L)$ is freely generated in weights $4, 4, 6, 6, 6$: $$M_*(\Orth_r(L)) = \mathbb{C}[\mathcal{E}_4, p_4, \mathcal{E}_6, p_6, \psi^2].$$ The Jacobian of the generators is a constant multiple of $b_1 b_2 b_3 b_4 b_5 b_6 \psi \Phi_{15}$, where $b_1,...,b_6$ are the weight two Borcherds products with principal parts of the form $4e_0 + q^{-1/2} e_v$, $\mathrm{ord}(v) = 2$.
\end{theorem}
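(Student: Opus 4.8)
The plan is to follow the template used for the earlier lattices in this section: verify that the five listed forms are modular of trivial character on $\Orth_r(L)$ and algebraically independent, then show that their Jacobian is forced to equal the stated product, and finally apply Theorem~\ref{th:j2} to conclude freeness and that $\Orth_r(L)$ is generated by reflections.

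\emph{Modularity of the generators.} The Eisenstein series $\mathcal{E}_4,\mathcal{E}_6$ are modular without character on all of $\Orth^+(L)$ by construction. As recalled above, $f\in S_3(\rho_L)$ and $\vartheta f\in S_5(\rho_L)$ are fixed by the action of $\Orth_r(L)$ on $\CC[L'/L]$, so their additive theta lifts $p_4$ and $p_6$ are modular without character on $\Orth_r(L)$ (they pick up only the quadratic ``swap'' character on the larger group $\Orth^+(L)$). Finally, $\psi$ is the additive lift of a weight-$2$ cusp form which, having rational Fourier coefficients and spanning the (one-dimensional) weight-$2$ cusp space for $\rho_L$, is $\Orth(L'/L)$-invariant up to a character of order at most two; hence $\psi$ is modular on $\Orth^+(L)$ with a character of order at most two, so $\psi^2$ is modular without character on $\Orth^+(L)$, and a fortiori on $\Orth_r(L)$.

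\emph{Identification of the Jacobian.} Since $\widetilde{\Orth}^+(L)\le\Orth_r(L)$, all five forms lie in the polynomial ring $M_*(\widetilde{\Orth}^+(L))=\CC[b_1,b_2,b_3,b_4,\psi]$, which has transcendence degree five; thus the forms are algebraically independent if and only if their Jacobian $J:=J(\mathcal{E}_4,p_4,\mathcal{E}_6,p_6,\psi^2)$ is not identically zero, and I would check this by computing the leading Fourier coefficients on the tube domain (equivalently by expressing the generators in terms of $b_1,\dots,b_4,\psi$ and checking the resulting polynomial Jacobian). Granting $J\ne 0$, $J$ is a cusp form of weight $4+(4+4+6+6+6)=30$ for the determinant character. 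Next I would classify the reflections of $\Orth^+(L)$: a primitive $r\in L'$ gives a reflection exactly when $\latt{r,r}=2/d$ and $\ord(r)\in\{d,d/2\}$ for some positive integer $d$, and since $\latt{r,r}\in\tfrac13\ZZ$ for this lattice only $d\in\{1,2,3,6\}$ can occur; inspecting the norms available on $L'$ (in particular that the nonzero norms of $A_2'$ are $\tfrac23,2,\tfrac83,\dots$) rules out $d=6$ and shows that the reflective mirrors are precisely the mirrors of the norm-$2$ primitive vectors of $L$ (forming $\div(\Phi_{15})$ with multiplicity one), the mirrors attached to the six cosets of $(\ZZ/2)^4$ with $Q$-value $\tfrac12$ (which are the pairwise disjoint simple divisors $\div(b_1),\dots,\div(b_6)$), and the mirrors attached to the two order-$3$ cosets in the $A_2$-direction (forming $\div(\psi)$). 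By Theorem~\ref{th:j}(3), $J$ vanishes on all of these, so $J/(b_1\cdots b_6\,\psi\,\Phi_{15})$ is a holomorphic modular form of weight $30-30=0$, hence a nonzero constant by Koecher's principle. Therefore $J=c\cdot b_1\cdots b_6\,\psi\,\Phi_{15}$, whose divisor is exactly the union of all mirrors of reflections in $\Orth_r(L)$, each with multiplicity one, and Theorem~\ref{th:j2} gives $M_*(\Orth_r(L))=\CC[\mathcal{E}_4,p_4,\mathcal{E}_6,p_6,\psi^2]$ together with the assertion that $\Orth_r(L)$ is generated by the corresponding reflections.

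I expect the crux to be twofold. The only genuinely computational step is the non-vanishing of $J$, i.e.\ the algebraic independence of the five generators, which requires a concrete Fourier expansion. Conceptually the delicate part is the reflection classification: one must check that $b_1,\dots,b_6$ and $\psi$ together exhaust \emph{all} reflective mirrors lying outside the discriminant kernel, each exactly once, since a missing factor would break the weight balance $30=30$ and a repeated factor would make the quotient above non-holomorphic. This is a finite verification on the discriminant form $(\ZZ/2)^4\times\ZZ/3$.
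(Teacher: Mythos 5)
Your proposal is correct and follows exactly the template the paper uses throughout (the paper in fact omits the proof of this theorem entirely): establish trivial character of the generators on $\Orth_r(L)$, verify $J\neq 0$ computationally, match the weight $30=30$ against the reflective divisor, and invoke Theorems~\ref{th:j} and~\ref{th:j2}. Your explicit classification of the reflective vectors of $2U(2)\oplus A_2$ (only $d\in\{1,2,3\}$ occur, giving $\div(\Phi_{15})$, the six $H(1/2,v)$ with $\ord(v)=2$, and $\div(\psi)$) is the detail the paper leaves implicit, and it checks out.
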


\begin{corollary} The ring of modular forms for the full integral orthogonal group of $L$ is generated by $\psi^2$ and by the Eisenstein series of weights $4, 6, 8, 10, 12$ with a single relation in weight $20$: $$M_*(\Orth^+(L)) = \mathbb{C}[\mathcal{E}_4, \mathcal{E}_6, \mathcal{E}_8, \mathcal{E}_{10}, \mathcal{E}_{12}, \psi^2] / R_{20}.$$
\end{corollary}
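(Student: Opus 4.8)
\section*{Proof proposal}

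The plan is to realize $\Orth^+(L)$ as an index-two extension of the reflection group $\Orth_r(L)$ and to compute $M_*(\Orth^+(L))$ as a ring of invariants inside the free algebra $M_*(\Orth_r(L)) = \mathbb{C}[\mathcal{E}_4, p_4, \mathcal{E}_6, p_6, \psi^2]$. First I would record that $\widetilde\Orth^+(L) \subseteq \Orth_r(L)$: this is immediate because $M_*(\widetilde\Orth^+(L)) = \mathbb{C}[b_1,b_2,b_3,b_4,\psi]$ is a free algebra, so $\widetilde\Orth^+(L)$ is generated by reflections. Combined with the fact that $\Orth^+(L)/\widetilde\Orth^+(L)$ embeds into $\Orth(L'/L)$, a short computation in the discriminant form shows that $\Orth^+(L) = \Orth_r(L) \rtimes \langle\iota\rangle$, where $\iota$ is the involution exchanging the two copies of $U(2)$ (recall $\iota\notin\Orth_r(L)$). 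Since $\Orth_r(L)$ is characteristic, hence normal, we get $M_*(\Orth^+(L)) = M_*(\Orth_r(L))^{\langle\iota\rangle}$.

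Next I would pin down the action of $\iota$ on the five generators. The Eisenstein series $\mathcal{E}_4,\mathcal{E}_6$ are $\Orth^+(L)$-invariant by construction, and $\psi^2$ is fixed by $\iota$ because $\iota$ sends the weight-two vector-valued cusp form whose theta lift is $\psi$ to $\pm$ itself (read off the explicit $\eta$-product expression). By contrast $\iota$ sends the weight-three cusp form $f$ to $-f$, hence its Serre derivative $\vartheta f$ to $-\vartheta f$; since the additive theta lift is equivariant, $\iota\cdot p_4 = -p_4$ and $\iota\cdot p_6 = -p_6$. Thus $\iota$ acts on $M_*(\Orth_r(L)) = \mathbb{C}[\mathcal{E}_4,\mathcal{E}_6,\psi^2] \otimes \mathbb{C}[p_4,p_6]$ trivially on the first tensor factor and by $(p_4,p_6)\mapsto(-p_4,-p_6)$ on the second. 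The invariants of $\mathbb{C}[p_4,p_6]$ under this sign change form the ring $\mathbb{C}[p_4^2, p_4 p_6, p_6^2] \cong \mathbb{C}[U,V,W]/(UW-V^2)$, so
$$M_*(\Orth^+(L)) = \mathbb{C}[\mathcal{E}_4, \mathcal{E}_6, \psi^2, p_4^2, p_4 p_6, p_6^2],$$
a hypersurface with the single relation $(p_4^2)(p_6^2) = (p_4 p_6)^2$ in weight $20$.

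Finally I would replace the generators $p_4^2$ (weight $8$), $p_4 p_6$ (weight $10$), $p_6^2$ (weight $12$) by the Eisenstein series $\mathcal{E}_8,\mathcal{E}_{10},\mathcal{E}_{12}$. In each of these weights the $\iota$-invariant part of $M_*(\Orth_r(L))$ is spanned by the monomials in $\mathcal{E}_4,\mathcal{E}_6,\psi^2$ together with one of $p_4^2, p_4 p_6, p_6^2$; comparing a handful of Fourier coefficients should show that in the expansion of $\mathcal{E}_8$ (resp. $\mathcal{E}_{10}$, $\mathcal{E}_{12}$) in this basis the coefficient of $p_4^2$ (resp. $p_4 p_6$, $p_6^2$) is nonzero. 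Then the change of generators $(p_4^2, p_4 p_6, p_6^2)\leftrightarrow(\mathcal{E}_8,\mathcal{E}_{10},\mathcal{E}_{12})$ is invertible modulo products of the remaining generators, so $\mathbb{C}[\mathcal{E}_4,\mathcal{E}_6,\mathcal{E}_8,\mathcal{E}_{10},\mathcal{E}_{12},\psi^2] = M_*(\Orth^+(L))$, and transporting the relation above through this substitution yields a single relation $R_{20}$ in weight $20$ (expressing the $p$-part of $\mathcal{E}_8\mathcal{E}_{12}$ in terms of $\mathcal{E}_{10}^2$ and lower monomials). The main obstacle is precisely this last nonvanishing check: it is the only step requiring an honest computation of Fourier coefficients of the orthogonal Eisenstein series—most conveniently via their restriction to a one-dimensional cusp, or via the pullback to $U(2)\oplus S_8$ together with the explicit weight-two forms there—and one must make sure that none of the three leading coefficients degenerates, since a priori $\mathcal{E}_8$ could have turned out to be a polynomial in $\mathcal{E}_4$ and $\mathcal{E}_6$, as happens for $\mathrm{SL}_2(\mathbb{Z})$.
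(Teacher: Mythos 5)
Your proposal matches the paper's proof: both descend from the free algebra $M_*(\Orth_r(L)) = \mathbb{C}[\mathcal{E}_4, p_4, \mathcal{E}_6, p_6, \psi^2]$ by taking invariants under the common sign action on $p_4, p_6$, arrive at the hypersurface $\mathbb{C}[\mathcal{E}_4,\mathcal{E}_6,\psi^2,p_4^2,p_4p_6,p_6^2]$ with the single relation $(p_4^2)(p_6^2)=(p_4p_6)^2$ in weight $20$, and then trade $p_4^2, p_4p_6, p_6^2$ for $\mathcal{E}_8, \mathcal{E}_{10}, \mathcal{E}_{12}$ by a Fourier-coefficient check. The only point where you take a longer (and deferred) route is the justification that every element of $\Orth^+(L)$ acts on $p_4$ and $p_6$ by the same sign: instead of your unproved index-two claim $\Orth^+(L)=\Orth_r(L)\rtimes\langle\iota\rangle$, the paper gets this directly from the fact that $S_3(\rho_L)$ is one-dimensional and spanned by a form with rational coefficients, so every automorphism of the discriminant form sends $f$, and hence its Serre derivative $\vartheta f$, to $\pm f$.
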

\begin{proof} Since $p_4$ and $p_6$ transform under $\Orth^+(L)$ with the same quadratic character, we find $$M_*(\Orth^+(L)) = \mathbb{C}[\mathcal{E}_4, \mathcal{E}_6, \psi^2, p_4^2, p_4 p_6, p_6^2] / R_{20}$$ with the relation $p_4^2 \cdot p_6^2 = (p_4 p_6)^2$ in weight $20$. By computing the expressions for the Eisenstein series in terms of the generators for $M_*(\Orth_r(L))$, we find that $\mathcal{E}_4, \mathcal{E}_6, \psi^2, \mathcal{E}_8, \mathcal{E}_{10}, \mathcal{E}_{12}$ also generate the algebra $M_*(\Orth^+(L))$.
\end{proof}

\subsection{Modular forms for \texorpdfstring{$2U(2)\oplus A_3$}{}}

The lattice $L = 2U(2) \oplus A_3$ admits $25$ Borcherds products of weight two. 15 of them have principal parts of the form $$4 e_0 + q^{-1/2} e_v, \; \mathrm{ord}(v) = 2,$$ and the remaining $10$ have principal parts of the form $$4e_0 + q^{-3/8} e_v + q^{-3/8} e_{-v}, \; \mathrm{ord}(v) = 4.$$

There is a natural embedding $2U(2) \oplus A_2 \rightarrow 2U(2) \oplus A_3$. If we use the Gram matrices $\begin{psmallmatrix} 2 & -1 \\ -1 & 2 \end{psmallmatrix}$ and $\begin{psmallmatrix} 2 & -1 & 0 \\ - 1 & 2 & -1 \\ 0 & -1 & 2 \end{psmallmatrix}$ for the root lattices then this is induced by the map $x \mapsto (x, 0)$. With respect to these Gram matrices and this embedding, the modular variety attached to $2U(2) \oplus A_2$ embeds as the divisor of the Borcherds product with principal part $$4 e_0 + q^{-3/8} e_{(0, 0, 3/4, 1/2, 1/4, 0, 0)} + q^{-3/8} e_{(0, 0, 1/4, 1/2, 3/4, 0, 0)}.$$ There is again a unique Borcherds product $\psi$ of weight $3$, this time with principal part $$6e_0 + q^{-1/2} e_{(0, 0, 1/2, 0, 1/2, 0, 0)}.$$ Finally, there is a Borcherds product $\Phi_{18}$ of weight $18$ with divisor $36e_0 + q^{-1} e_0$.

\begin{theorem}\label{th:2U(2)+A3_d} The $25$ Borcherds products of weight $2$ span a five-dimensional space. All of the products of weight $2$ and $3$ are also additive lifts. If $b_1, b_2, b_3, b_4, b_5$ are any linearly independent products of weight $2$, then they are algebraically independent and together with the weight $3$ product $\psi$ they generate the algebra of modular forms for the discriminant kernel: $$M_*(\widetilde \Orth^+(L)) = \mathbb{C}[b_1,b_2,b_3,b_4,b_5, \psi].$$
\end{theorem}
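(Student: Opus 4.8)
The plan is to repeat the strategy of the preceding subsections: transport the (already established) free structure of $M_*(\widetilde\Orth^+(2U(2)\oplus A_2))$ along the embedding $2U(2)\oplus A_2 \hookrightarrow 2U(2)\oplus A_3$ to obtain algebraic independence, compute the Jacobian of a suitable set of six forms, identify it with $\Phi_{18}$, and then invoke the Jacobian criterion. First I would observe, by inspection of their divisors, that all $25$ weight two products and the weight three product $\psi$ are modular \emph{without character} on the subgroup $\widetilde\Orth_r(L)$ generated by the reflections whose mirrors lie in the divisor of $\Phi_{18}$ (equivalently, all reflections in the discriminant kernel): each of them has a simple zero along a single $\widetilde\Orth_r(L)$-orbit of mirrors, and the components of $\mathrm{div}\,\Phi_{18}$ are exactly the mirrors of reflections in $\widetilde\Orth_r(L)$.

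Next I would compute the theta-contractions, along the embedding, of the input forms of the $24$ weight two products that do not vanish identically on the sub-variety corresponding to $2U(2)\oplus A_2$, as well as of the input of $\psi$. The expectation is that these land on the full set of $15$ weight two Borcherds products and on the weight three product of $2U(2)\oplus A_2$, respectively. Since the weight two part of $M_*(\widetilde\Orth^+(2U(2)\oplus A_2))$ is four-dimensional, I can select products $b_1,\dots,b_4$ on $L$ whose pullbacks are linearly independent, hence generators of that degree-two part; adjoining a product $b_5$ that vanishes to order one along the sub-variety and the weight three product $\psi$, the six forms $b_1,\dots,b_5,\psi$ restrict on $2U(2)\oplus A_2$ to a set of generators, $0$, and $\psi$. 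Combined with the known free structure of $M_*(\widetilde\Orth^+(2U(2)\oplus A_2))$ this forces $b_1,\dots,b_5,\psi$ to be algebraically independent. Their Jacobian $J = J(b_1,\dots,b_5,\psi)$ is then a nonzero cusp form of weight $5 + (2+2+2+2+2+3) = 18$ for $\widetilde\Orth_r(L)$ with the determinant character, which by Theorem~\ref{th:j}(3) vanishes on every mirror of a reflection in $\widetilde\Orth_r(L)$. As these mirrors constitute exactly the (reduced) divisor of $\Phi_{18}$, the quotient $J/\Phi_{18}$ is a holomorphic scalar modular form of weight zero, hence a constant, which I would confirm to be nonzero by comparing a single Fourier coefficient. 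Theorem~\ref{th:j2} then yields $M_*(\widetilde\Orth_r(L)) = \mathbb{C}[b_1,\dots,b_5,\psi]$ and that $\widetilde\Orth_r(L)$ is generated by reflections.

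To finish, I would compute the Hilbert series of $M_*(\rho_L)$ and compare the dimensions of $M_k(\widetilde\Orth_r(L))$ in weights $2$ and $3$ with the dimensions of the spaces of additive theta lifts, remembering (as in the $U(2)\oplus S_8$ and $2U(2)\oplus A_2$ cases) that the lift from $M_{1/2}(\rho_L)$ has a nontrivial kernel. This shows that all weight two products and $\psi$ arise as additive lifts, hence are modular under the full discriminant kernel without character, so $\widetilde\Orth_r(L)=\widetilde\Orth^+(L)$ and $M_*(\widetilde\Orth^+(L))=\mathbb{C}[b_1,\dots,b_5,\psi]$. The assertion that the $25$ weight two products span a five-dimensional space is then immediate: they all lie in $M_2(\widetilde\Orth^+(L))$, which has dimension $5$ by the free structure, and five of them are already linearly independent. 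The main obstacle I anticipate is bookkeeping rather than conceptual: matching the $24$ theta-contractions with the $15$ weight two products of $2U(2)\oplus A_2$, and verifying that four of the resulting pullbacks are linearly independent, requires a careful computation in the (fairly large) discriminant form of $L$; and although routine, the nonvanishing of the Jacobian constant must be checked from an explicit $q$-expansion.
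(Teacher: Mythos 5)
Your proposal is correct and follows essentially the same route as the paper: theta-contraction along the embedding $2U(2)\oplus A_2\hookrightarrow 2U(2)\oplus A_3$ to get algebraic independence, identification of the Jacobian with $\Phi_{18}$ via Theorems \ref{th:j} and \ref{th:j2}, and the dimension count showing the six-dimensional space $M_{1/2}(\rho_L)$ lifts to a five-dimensional space of additive lifts (with $\psi$ the lift of the unique weight $3/2$ cusp form), which upgrades modularity from $\widetilde\Orth_r(L)$ to $\widetilde\Orth^+(L)$. One phrasing quibble: the trivial character of the weight two products on $\widetilde\Orth_r(L)$ follows because their divisors are \emph{disjoint} from the mirrors of the reflections generating $\widetilde\Orth_r(L)$ (the components of $\operatorname{div}\Phi_{18}$), not because they have simple zeros along some orbit of mirrors — a simple zero on a mirror of a reflection in the group would instead force a nontrivial character.
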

\begin{proof} Computing the theta-contraction of the inputs into the $24$ products of weight two that do not vanish along $2U(2) \oplus A_2$ shows that all of the $15$ weight two products on that lattice arise as pullbacks. Similarly the weight three products on $2U(2) \oplus A_3$ pulls back to the weight three product on $2U(2) \oplus A_2$. By essentially the same argument as the pullback from $2U(2) \oplus A_2$ to $U(2) \oplus S_8$, we obtain the algebraic independence of any linearly independent set of weight two products and the fact that, together with the weight three product, these generate the ring of modular forms for the group $\widetilde\Orth_r$ generated by reflections whose mirrors lie in the divisor of $\Phi_{18}$ (and that their Jacobian equals $\Phi_{18}$ up to a nonzero multiple). There is a six-dimensional space of modular forms of weight $1/2$ for the Weil representation attached to $2U(2) \oplus A_3$, which lift to five linearly independent forms under the additive theta lift, and therefore all of the products of weight $2$ are additive lifts (and therefore modular without character on the discriminant kernel); similarly, the weight three product is the additive lift of the unique (up to multiple) vector-valued cusp form of weight $3/2$ for the Weil representation attached to $L$.
\end{proof}

\begin{remark} The cusp form of weight $3/2$ for the Weil representation $\rho_L$ is obtained by ``averaging out'' the form $\eta(\tau)^3$: it takes the form $$f(\tau) = \eta(\tau)^3 \sum_{\substack{v \in L'/L \\ Q(v) = -1/8 + \mathbb{Z}}} \varepsilon(v) e_v,$$ where $\varepsilon(v) \in \{\pm 1\}$. In particular $f$ and therefore $\psi_3$ transforms with a quadratic character under $\Orth^+(L)$.
\end{remark}

Finally, we determine the graded ring of modular forms for $\Orth^+(L)$.

\begin{theorem}\label{th:2U(2)+A3} The group $\Orth^+(L)$ is generated by reflections. The algebra of modular forms for $\Orth^+(L)$ is freely generated in weights $4, 6, 6, 8, 10, 12$: $$M_*(\Orth^+(L)) = \mathbb{C}[\mathcal{E}_4, \mathcal{E}_6, \mathcal{E}_8, \mathcal{E}_{10}, \mathcal{E}_{12}, \psi^2].$$ The Jacobian of the generators is a constant multiple of $\Phi_{18} \psi \cdot \prod_{i=1}^{15} b_i$, where $b_i$ are the 15 Borcherds products of weight two with principal part $4e_0 + q^{-1/2} e_v$, $\mathrm{ord}(v) = 2$.
\end{theorem}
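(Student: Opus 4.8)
The plan is to apply the Jacobian criterion of Theorems~\ref{th:j} and~\ref{th:j2} directly to $\Gamma = \Orth^+(L)$, exactly as in the previous sections; here $L = 2U(2)\oplus A_3$ has signature $(5,2)$, so $n = 5$. In contrast to the lattice $2U(2)\oplus A_2$, where $\Orth^+(L)$ is not a reflection group, here the method will simultaneously exhibit $\Orth^+(L)$ as a reflection group and its algebra of modular forms as free.

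First I would check that $\mathcal{E}_4, \mathcal{E}_6, \mathcal{E}_8, \mathcal{E}_{10}, \mathcal{E}_{12}$ and $\psi^2$ are modular on $\Orth^+(L)$ without character. Each $\mathcal{E}_k$ is the theta lift of the vector-valued Eisenstein series $E_{k+1-n/2, 0}$ attached to the zero coset, and since $g\cdot 0 = 0$ for every $g \in \Orth^+(L)$ this input is $\Orth(L'/L)$-invariant, so $\mathcal{E}_k$ has trivial character on $\Orth^+(L)$. The weight-$3/2$ cusp form lifting to $\psi$ transforms with a quadratic character under $\Orth^+(L)$ (by the remark following Theorem~\ref{th:2U(2)+A3_d}), hence $\psi$ has a character of order at most two and $\psi^2$ has trivial character.

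Next I would prove these six forms are algebraically independent. Since they have trivial character on $\Orth^+(L)$ we have $M_*(\Orth^+(L)) \subseteq M_*(\widetilde\Orth^+(L)) = \mathbb{C}[b_1,\dots,b_5,\psi]$ by Theorem~\ref{th:2U(2)+A3_d}, a polynomial ring on six generators; so it suffices to express $\mathcal{E}_4, \mathcal{E}_6, \mathcal{E}_8, \mathcal{E}_{10}, \mathcal{E}_{12}, \psi^2$ in terms of $b_1,\dots,b_5,\psi$ (equivalently, to compute the $6\times 6$ Jacobian determinant in those coordinates, or to compare enough Fourier coefficients) and check that the resulting six polynomials are algebraically independent. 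By Theorem~\ref{th:j} the modular Jacobian $J := J(\mathcal{E}_4, \mathcal{E}_6, \mathcal{E}_8, \mathcal{E}_{10}, \mathcal{E}_{12}, \psi^2)$ is then a nonzero cusp form of weight $n + (4+6+6+8+10+12) = 51$ for $\Orth^+(L)$ with the determinant character, and it vanishes on every mirror of a reflection in $\Orth^+(L)$.

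The hard part will be to identify $\div J$ with the divisor of $J_0 := \Phi_{18}\,\psi\,\prod_{i=1}^{15} b_i$ and to verify that this divisor consists of exactly the mirrors of reflections in $\Orth^+(L)$, each with multiplicity one. Using the reflection criterion recalled before Theorem~\ref{th:j} together with the discriminant form $(\mathbb{Z}/2)^4 \oplus \mathbb{Z}/4$ of $L$, one finds that the primitive $r \in L'$ of positive norm with $\sigma_r \in \Orth^+(L)$ are precisely those with $r^2 = 2$ (necessarily $r \in L$; these mirrors make up $H(1,0) = \div\Phi_{18}$) and those with $r^2 = 1$, $\ord(r) = 2$ (these mirrors make up the Heegner divisors $H(1/2, v)$ over the $16$ order-two cosets $v$ with $Q(v) = 1/2$), the cases $d \ge 3$ being excluded by the shape of the discriminant form. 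Exactly $15$ of these $16$ cosets carry a weight-two Borcherds product $b_i$, and the remaining one is the coset occurring in $\div\psi$; hence $J_0$ has weight $18 + 3 + 30 = 51$ and $\div J_0$ is the union of all mirrors of reflections in $\Orth^+(L)$, each with multiplicity one. Since $J$ vanishes on all of these mirrors and has the same weight, $J/J_0$ is a holomorphic modular form of weight $0$, hence a constant by Koecher's principle, and it is nonzero because $J \ne 0$; thus $J = cJ_0$ with $c \ne 0$. Finally, Theorem~\ref{th:j2} applies verbatim: $M_*(\Orth^+(L)) = \mathbb{C}[\mathcal{E}_4, \mathcal{E}_6, \mathcal{E}_8, \mathcal{E}_{10}, \mathcal{E}_{12}, \psi^2]$, and $\Orth^+(L)$ is generated by the reflections whose mirrors lie in $\div J$, so in particular $\Orth^+(L) = \Orth_r(L)$.
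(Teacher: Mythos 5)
Your proof is correct and follows exactly the Jacobian-criterion template that the paper applies to every other lattice in this tower: the weight count ($51 = 5 + 46 = 18 + 3 + 30$), the classification of reflective vectors ($r^2 = 2$ with $r \in L$, and $r^2 = 1$ with $\ord(r) = 2$, the cases $d \ge 3$ being ruled out by the values of $Q$ on the order-two and order-four cosets of $(\ZZ/2)^4 \oplus \ZZ/4$), and the count of $15 + 1$ order-two cosets of norm $1/2$ carrying the products $b_i$ and $\psi$ all match the paper's data. For this particular theorem the paper writes out no proof, instead citing the geometric argument of Freitag--Salvati Manni \cite{FSM07}, but your self-contained Jacobian argument is what the authors intend, since the identification of the Jacobian with a constant multiple of $\Phi_{18}\,\psi\prod_{i=1}^{15} b_i$ asserted in the statement is not part of the cited result and in any case requires the finite computation (algebraic independence, equivalently nonvanishing of $J$) that you correctly flag as the step needing explicit Fourier expansions.
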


\begin{remark}
Theorem \ref{th:2U(2)+A3} has been proved by Freitag--Salvati Manni in \cite[Corollary 4.7]{FSM07}. Moreover, the modular group in \cite[Theorem 4.6]{FSM07} is exactly the reflection subgroup $\Orth_1(L)$ generated by the discriminant kernel and the reflections associated to the divisor of $\psi$. It is easy to derive that 
$$
M_*(\Orth_1(L))=\CC[b_1,b_2,b_3,b_4,b_5,\psi^2],
$$
which recovers \cite[Theorem 4.6]{FSM07}.
\end{remark}

\subsection{Modular forms for \texorpdfstring{$2U(2)\oplus D_4$}{}}

The lattice $L = 2U(2) \oplus D_4$ admits $36$ Borcherds products of singular weight $2$, all with principal parts of the form $$4 e_0 + q^{-1/2} e_v, \; \mathrm{ord}(v) = 2.$$ Using the Gram matrices $\begin{psmallmatrix} 2 & -1 & 0 \\ -1 & 2 & -1 \\ 0 & -1 & 2 \end{psmallmatrix}$ and $\begin{psmallmatrix} 2 & -1 & 0 & 0 \\ -1 & 2 & -1 & -1 \\ 0 & -1 & 2 & 0 \\ 0 & -1 & 0 & 2 \end{psmallmatrix}$ for the root lattices $A_3$ and $D_4$, there is a natural embedding $x \mapsto (x, 0)$ of $A_3$ in $D_4$ that induces an embedding of the modular variety attached to $2U(2) \oplus A_3$ into that for $2U(2) \oplus D_4$. The image is exactly the divisor of the Borcherds product with principal part $$4e_0 + q^{-1/2} e_{(0, 0, 1/2, 0, 1/2, 0, 0, 0)}.$$ Finally, there is a Borcherds product $\Phi_{24}$ of weight $24$ with principal part $48 e_0 + q^{-1} e_0$.

The dimensions of modular forms for the Weil representation attached to $2U(2) \oplus D_4$ have generating series $$\sum_{k = 0}^{\infty} \mathrm{dim}\, M_k(\rho) t^k = \frac{7 + 21 t^2 + 21t^4 + 15t^6}{(1 - t^4)(1 - t^6)}.$$ In particular there is a seven-dimensional space of Weil invariants. 

Unlike the lower-rank cases in this tower, there are no modular forms of weight three (or indeed any odd weight) so the weight three product $\psi$ on $2U(2) \oplus A_3$ is not contained in the image of the pullback. However we can construct a preimage of $\psi^2$ as follows. Let $f(\tau) \in M_{-2}^!(\rho)$ be the input form that yields the Borcherds product $h_1$ with a simple zero on $2U(2) \oplus A_3$. Then the Serre derivative of $f$ maps under the singular additive theta lift to a meromorphic modular form $h_2$ of weight $2$ with only a double pole on $2U(2) \oplus A_3$. 
The leading term in the Taylor expansion of $h_1$ about $2U(2) \oplus A_3$ is a nonzero modular form of weight three and therefore equals $\psi$ (up to a nonzero multiple). The leading term in the Laurent expansion of $h_2$ about $2U(2) \oplus A_3$ is a nonzero constant. Thus $h := h_1^2 h_2$ is a holomorphic modular form of weight $6$ whose pullback to $2U(2) \oplus A_3$ equals $\psi^2$ up to a constant multiple.

\begin{theorem} The $36$ Borcherds products of weight $2$ span a six-dimensional space. If $b_1,...,b_6$ are any linearly independent products of weight $2$, then they are algebraically independent and together with the weight six form $h$ they generate the algebra of modular forms for the discriminant kernel: $$M_*(\widetilde \Orth^+(L)) = \mathbb{C}[b_1,b_2,b_3,b_4,b_5,b_6,h].$$ The Jacobian $J = J(b_1,b_2,b_3,b_4,b_5,b_6,h)$ is a nonzero constant multiple of the product $\Phi_{24}$.
\end{theorem}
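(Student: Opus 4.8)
The plan is to run the pullback argument that handled the smaller members of the $2U(2)\oplus R$ tower, now along the embedding $A_3\hookrightarrow D_4$, $x\mapsto(x,0)$, whose induced embedding of modular varieties has image the rational quadratic divisor along which the weight-two Borcherds product $h_1$ vanishes. Let $P\colon M_*(\widetilde\Orth^+(L))\to M_*(\widetilde\Orth^+(2U(2)\oplus A_3))$ be the associated pullback. First I would compute the theta-contractions along this embedding of the input forms of the $35$ weight-two products that do not vanish identically on the $A_3$-divisor, and check that their $P$-images recover all $15$ weight-two Borcherds products of $2U(2)\oplus A_3$, which by Theorem~\ref{th:2U(2)+A3_d} span the full $5$-dimensional space $M_2(\widetilde\Orth^+(2U(2)\oplus A_3))$. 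Since $h_1$ lies in $\ker P$ and is not a linear combination of products whose $P$-images are independent (such a relation would pull back to a nontrivial relation among independent forms), the $36$ weight-two products span a space of dimension at least $6$; that it is exactly $6$ will follow a posteriori from the structure theorem, once we know $\dim M_2(\widetilde\Orth^+(L))=6$ and that all $36$ products lie in that space.

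Next I would extract generators. Choose five weight-two products $b_1,\dots,b_5$ whose pullbacks form a basis of $M_2(\widetilde\Orth^+(2U(2)\oplus A_3))$ and set $b_6:=h_1$. Then $P(b_1),\dots,P(b_5)$ are algebraically independent, hence so are $b_1,\dots,b_5$ on $L$; moreover $P(h)=\psi^2$ up to a constant (as constructed in the setup), where $\psi$ is the weight-three product on $2U(2)\oplus A_3$ and is algebraically independent from the weight-two generators there by Theorem~\ref{th:2U(2)+A3_d}, so $b_1,\dots,b_5,h$ are algebraically independent on $L$. Since $h_1=b_6$ vanishes on the divisor $\{P=0\}$ while none of the others does, restricting a hypothetical polynomial relation among $b_1,\dots,b_6,h$ to that divisor and dividing out $h_1$ repeatedly forces the relation to be trivial; thus $b_1,\dots,b_6,h$ are algebraically independent. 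As $\dim\mathcal{D}(L)+1=7$, these seven forms generate a polynomial subring of $M_*(\widetilde\Orth^+(L))$.

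Now I would apply the Jacobian criterion. The Jacobian $J=J(b_1,\dots,b_6,h)$ has weight $6+(6\cdot 2+6)=24$, and being the Jacobian of algebraically independent forms it is a nonzero cusp form for the determinant character that vanishes on every mirror of a reflection in $\widetilde\Orth^+(L)$ by Theorem~\ref{th:j}. For the discriminant kernel those mirrors are exactly the divisors $\mathcal{D}_r(L)$ with $r\in L$ of norm $2$, that is, the Heegner divisor $H(1,0)$, which is precisely the divisor of $\Phi_{24}$. Hence $J/\Phi_{24}$ is a holomorphic modular form of weight zero, so a nonzero constant by Koecher's principle, and $J=c\,\Phi_{24}$. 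In particular $J$ vanishes with multiplicity one exactly on the mirrors of reflections in the subgroup $\widetilde\Orth_r(L)$ generated by them, so Theorem~\ref{th:j2} gives $M_*(\widetilde\Orth_r(L))=\mathbb{C}[b_1,\dots,b_6,h]$ and $\widetilde\Orth_r(L)$ is generated by reflections with mirrors in $\operatorname{div}(\Phi_{24})$. This forces $\dim M_2(\widetilde\Orth_r(L))=6$, which pins the span of the $36$ products at $6$ and, by the same dimension bookkeeping, shows that for an arbitrary linearly independent choice of weight-two products $b_1,\dots,b_6$ one still has $\mathbb{C}[b_1,\dots,b_6]=\mathbb{C}[b_1,\dots,b_5,h_1]$ after relabeling, hence $\mathbb{C}[b_1,\dots,b_6,h]=M_*(\widetilde\Orth_r(L))$.

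Finally I would upgrade to the full discriminant kernel. Weight-two orthogonal modular forms are additive theta lifts of the $7$-dimensional space of Weil invariants (weight-zero vector-valued forms), and a direct computation shows this lift has a one-dimensional kernel, so its image is the $6$-dimensional span of the $36$ Borcherds products; consequently every weight-two product, in particular $h_1$, is an additive lift and hence modular on all of $\widetilde\Orth^+(L)$ without character. Since $h_2$ is the image of the Serre derivative of the input form of $h_1$ under Borcherds' singular additive theta lift, it is likewise $\widetilde\Orth^+(L)$-modular without character, so $h=h_1^2h_2$ has trivial character on $\widetilde\Orth^+(L)$. Therefore $\widetilde\Orth_r(L)=\widetilde\Orth^+(L)$ and $M_*(\widetilde\Orth^+(L))=\mathbb{C}[b_1,\dots,b_6,h]$. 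I expect the main obstacle to be the first step: carrying out the $35$ theta-contractions and verifying that they recover all $15$ weight-two products of $2U(2)\oplus A_3$, together with the companion computation identifying the one-dimensional kernel of the additive lift; everything after that is a formal application of the Jacobian criterion.
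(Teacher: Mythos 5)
Your proposal is correct and follows essentially the same route as the paper: pull back along the $A_3\hookrightarrow D_4$ embedding to reduce algebraic independence to Theorem~\ref{th:2U(2)+A3_d}, identify the weight-$24$ Jacobian with $\Phi_{24}$ via Theorem~\ref{th:j} and Koecher's principle, apply Theorem~\ref{th:j2} to the reflection subgroup, and then use the dimension count for additive lifts of Weil invariants (together with the construction of $h=h_1^2h_2$) to upgrade from $\widetilde\Orth_r(L)$ to the full discriminant kernel. The only difference is that you spell out a few steps the paper leaves implicit, such as why an arbitrary linearly independent choice of six weight-two products works.
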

\begin{proof} Choose any five products $b_1,...,b_5$ which restrict to a spanning set of $M_2(\widetilde \Orth^+(2U(2) \oplus A_3))$ and let $b_6$ be the product that vanishes along $2U(2) \oplus A_3$. Then the restrictions of $b_1,...,b_5$ and $h$ are algebraically independent; so $b_1,...,b_5,b_6,h$ are algebraically independent. Therefore their Jacobian $J = J(b_1,...,b_6, h)$ is nonzero of weight $24$. Applying Theorems \ref{th:j}, \ref{th:j2} as in the previous sections, we conclude that $J = \Phi_{24}$ up to a constant multiple and that $$M_*(\widetilde \Orth_r(L)) = \mathbb{C}[b_1,...,b_6, h]$$ where $\widetilde \Orth_r(L)$ is generated by reflections whose mirrors lie in the divisor of $\Phi_{24}$. The Weil invariants lift to a six-dimensional space of additive lifts of forms of weight two, so comparing dimensions shows that all of the weight two Borcherds products are also additive lifts, and are therefore modular without character under the full discriminant kernel. By construction of $h$ this implies that $h$ is also modular without character under the discriminant kernel. As before, we conclude that $\widetilde \Orth^+(L)$ is generated by its reflections.
\end{proof}

\begin{remark}
This result was first proved by Freitag--Salvati Manni \cite{FSM07} using a geometric approach. They also showed in \cite{FH00} that the 36 singular weight Borcherds products vanish at a distinguished point simultaneously.
\end{remark}

Since $D_4$ has level two, passing from $L$ to $2L'$ preserves the full orthogonal group i.e. $$\Orth^+(2U(2) \oplus D_4) = \Orth^+(2U \oplus D_4),$$ so $M_*(\Orth^+(2U(2)\oplus D_4)) \cong M_*(\Orth^+(2U \oplus D_4))$.

\section{The \texorpdfstring{$U\oplus U(3)\oplus R$}{} tower}\label{sec:5}

\subsection{The \texorpdfstring{$U\oplus U(3)\oplus A_1$}{} lattice}

Let $L = U \oplus U(3) \oplus A_1$. We will compute two free algebras of orthogonal modular forms attached to $L$. Note that the discriminant kernel of $L$ can be interpreted as a level three subgroup of $\mathrm{Sp}_4(\mathbb{Z})$, and that these algebras were found by Aoki--Ibukiyama \cite{AI}. We include our argument as it is somewhat simpler.

Since $L$ splits a unimodular plane over $\mathbb{Z}$, the additive theta lift is injective, and the dimensions of the Maass subspaces equal the dimensions of modular forms for the Weil representation: $$\sum_{k=0}^{\infty} \mathrm{dim}\, \mathrm{Maass}_{k + 1}(\widetilde \Orth^+(L)) t^k = \sum_{k=0}^{\infty} \mathrm{dim}\, M_{k + 1/2}(\rho) t^k = \frac{(1 + t + t^3)(1 + t^2 + t^3)}{(1 - t^2) (1 - t^6)}.$$ Moreover, counting Eisenstein series shows that for $k \ge 2$ $$\mathrm{dim}\, S_{k + 1/2}(\rho) = \begin{cases} \mathrm{dim}\, M_{k + 1/2}(\rho) - 3: & k \; \text{odd}; \\ \mathrm{dim}\, M_{k + 1/2}(\rho) - 2: & k \; \text{even}. \end{cases}$$ In particular, there is a lift $m_1$ of weight one, two linearly independent lifts $m_3^{(1)}, m_3^{(2)}$ of weight three, and a cuspidal lift $m_4$ of weight $4$.

With respect to the Gram matrix $\begin{psmallmatrix} 0 & 0 & 0 & 0 & 1 \\ 0 & 0 & 0 & 3 & 0 \\ 0 & 0 & 2 & 0 & 0 \\ 0 & 3 & 0 & 0 & 0 \\ 1 & 0 & 0 & 0 & 0 \end{psmallmatrix}$, this lattice admits Borcherds products with principal parts as in Table \ref{tab:U+U3+A1}.

\begin{table}[htbp]
\centering
\caption{Some Borcherds products for $U\oplus U(3) \oplus A_1$}
\label{tab:U+U3+A1}
\begin{tabular}{l*{3}{c}r}
\hline
Name & Weight & Principal part\\
\hline
$b_{1, A_1}$ & $2$ & $4 e_0 + q^{-1/4} e_{(0,0,1/2,0,0)}$ \\
$b_{2, A_1}$ & $3$ & $6 e_0 + q^{-1/4} e_{(0,0,1/2,1/3,0)} + q^{-1/4} e_{(0,0,1/2,2/3,0)}$ \\
$b_{3, A_1}$ & $3$ & $6 e_0 + q^{-1/4} e_{(0,1/3,1/2,0,0)} + q^{-1/4} e_{(0,2,/31/2,0,0)}$ \\
$\psi_{A_1}$ & $3$ & $6 e_0 + q^{-1/3} e_{(0,1/3,0,1/3,0)} + q^{-1/3} e_{(0,2/3,0,2/3,0)}$ \\
$\Phi_{14, A_1}$ & $14$ & $28e_0 + q^{-1} e_0$ \\
\hline
\end{tabular}
\end{table}

\begin{lemma} The Jacobian $J(m_1, m_3^{(1)}, m_3^{(2)}, m_4)$ is not identically zero.
\end{lemma}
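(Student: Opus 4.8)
The strategy is to pin $J$ down up to a constant multiple, and then to check that the constant does not vanish.

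First note that $J=J(m_1,m_3^{(1)},m_3^{(2)},m_4)$ has weight $n+\sum k_i=3+(1+3+3+4)=14$, which is exactly the weight of the Borcherds product $\Phi_{14,A_1}$. Since each $m_i$ is an additive theta lift, it is modular without character on the full discriminant kernel $\widetilde\Orth^+(L)$, and by Theorem~\ref{th:j}(1) the Jacobian $J$ is a cusp form of weight $14$ for $\widetilde\Orth^+(L)$ with the determinant character. Suppose $J\neq 0$. For every primitive $r\in L$ with $\latt{r,r}=2$ the reflection $\sigma_r$ lies in $\widetilde\Orth^+(L)$ (by the remark preceding Theorem~\ref{th:j}), so Theorem~\ref{th:j}(3) forces $J$ to vanish along every mirror $\cD_r(L)$. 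The union of these mirrors is the Heegner divisor $H(1,0)$, which is precisely the divisor of $\Phi_{14,A_1}$, counted with multiplicity one. Hence $J/\Phi_{14,A_1}$ is a holomorphic modular form of weight zero, and therefore a constant $c$. Thus $J=c\,\Phi_{14,A_1}$, and the lemma is equivalent to $c\neq 0$.

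To see that $c\neq 0$ it suffices to exhibit a single nonzero Fourier coefficient of $J$. The forms $m_1$, $m_3^{(1)}$, $m_3^{(2)}$, $m_4$ are the additive lifts of an explicit basis of $M_{1/2}(\rho)$, $M_{5/2}(\rho)$ and $S_{7/2}(\rho)$, so the initial terms of their Fourier expansions on the tube domain $\mathbb{H}_K$, and of the gradients $\nabla m_i$, can be written down directly; assembling the $4\times 4$ Jacobian determinant of Theorem~\ref{th:j} and expanding to low order, one verifies that it does not vanish identically, whence $c\neq 0$. With slightly less bookkeeping one may instead compute the leading Fourier--Jacobi coefficient of $J$: it is a Wronskian-type determinant formed from the leading Fourier--Jacobi coefficients of the $m_i$, which are explicit Jacobi forms of small weight, and one checks that this Jacobi form is nonzero. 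A third option is to restrict the four forms to a modular curve inside $\mathbb{H}_K$ and reduce to the non-vanishing of a Wronskian of classical elliptic modular forms.

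The only genuine obstacle is this last verification: a priori the leading terms could conspire to cancel, so one must carry the computation far enough — or choose the restriction cleverly enough — to exhibit a surviving coefficient. The structural input that makes it work is that $\dim M_{5/2}(\rho)=2$, so that $m_3^{(1)}$ and $m_3^{(2)}$ are genuinely linearly independent and the two middle columns of the Jacobian matrix are not proportional; combined with the cuspidality of $m_4$ and the fact that $m_1$ has the minimal possible weight, this is what rules out a degeneracy.
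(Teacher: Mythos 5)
Your argument is correct and in substance the same as the paper's: the lemma ultimately rests on an explicit finite Fourier-expansion computation (the paper records that precision $O(q,s)^5$ suffices), which you, like the paper, describe rather than carry out — your closing ``structural'' remarks about $\dim M_{5/2}(\rho)=2$ and cuspidality of $m_4$ do not by themselves rule out an algebraic relation, so the computation is still the decisive step. The preliminary reduction $J=c\,\Phi_{14,A_1}$ via the divisor argument is valid but adds nothing to the nonvanishing claim itself; it is precisely the content of the theorem that follows the lemma in the paper.
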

\begin{proof} This can be checked by computing the Fourier expansions of $m_1, m_3^{(1)}, m_3^{(2)}, m_4$ to precision $O(q, s)^5$.
\end{proof}

\begin{theorem}\label{th:U+S8_d} The graded ring of modular forms for the discriminant kernel of $U\oplus U(3)\oplus A_1$ is a free algebra: $$M_*(\widetilde \Orth^+(L)) = \mathbb{C}[m_1, m_3^{(1)}, m_3^{(2)}, m_4].$$ The Jacobian of the generators equals $\Phi_{14}$ up to a constant multiple.
\end{theorem}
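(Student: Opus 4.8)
The argument will follow the same template as for the lattices $U\oplus U(2)\oplus R$ in Section \ref{sec:3}: the candidate generators $m_1$, $m_3^{(1)}$, $m_3^{(2)}$, $m_4$ are already in hand as additive theta lifts, the non-vanishing of their Jacobian has just been arranged, and the rest is formal. First I would record that $J:=J(m_1,m_3^{(1)},m_3^{(2)},m_4)$ has weight $n+\sum_i k_i=3+(1+3+3+4)=14$. By Theorem \ref{th:j}(1) it is a cusp form of weight $14$ with the determinant character for $\widetilde\Orth^+(L)$, and by Theorem \ref{th:j}(3) it vanishes along the mirror $\mathcal{D}_r(L)$ of every reflection $\sigma_r\in\widetilde\Orth^+(L)$. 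For primitive $r\in L$ we have $\sigma_r\in\widetilde\Orth^+(L)$ exactly when $\langle r,r\rangle=2$, so these mirrors are precisely the components of the Heegner divisor $H(1,0)$, which is also the divisor of $\Phi_{14,A_1}$ (the input form has $c(-1,0)=1$, so the zeros are simple). Hence $J/\Phi_{14,A_1}$ extends to a holomorphic modular form of weight $0$, which by Koecher's principle is a constant; since $J\not\equiv 0$ by the preceding lemma this constant is nonzero, and $J=c\,\Phi_{14,A_1}$ for some $c\in\mathbb{C}^{\times}$.

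Let $\widetilde\Orth_r(L)\le\widetilde\Orth^+(L)$ denote the subgroup generated by the reflections associated to the divisor of $\Phi_{14,A_1}$, i.e.\ by the $\sigma_r$ with $r\in L$ primitive and $\langle r,r\rangle=2$. Applying Theorem \ref{th:j}(3) to this group shows that any reflection it contains must have mirror inside $\mathrm{div}(J)=\mathrm{div}(\Phi_{14,A_1})=H(1,0)$ and is therefore one of the generating reflections; thus $J$ vanishes exactly on the mirrors of reflections in $\widetilde\Orth_r(L)$, each with multiplicity one. Since $m_1,m_3^{(1)},m_3^{(2)},m_4$ are additive lifts they carry the trivial character on $\widetilde\Orth^+(L)$, hence on $\widetilde\Orth_r(L)$, so Theorem \ref{th:j2} gives $M_*(\widetilde\Orth_r(L))=\mathbb{C}[m_1,m_3^{(1)},m_3^{(2)},m_4]$ (and shows incidentally that $\widetilde\Orth_r(L)$ is generated by reflections).

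To finish I would pass to the full discriminant kernel by a sandwich argument: each $m_i$ is a holomorphic modular form with trivial character on $\widetilde\Orth^+(L)$, so
\[
\mathbb{C}[m_1,m_3^{(1)},m_3^{(2)},m_4]\ \subseteq\ M_*(\widetilde\Orth^+(L))\ \subseteq\ M_*(\widetilde\Orth_r(L))\ =\ \mathbb{C}[m_1,m_3^{(1)},m_3^{(2)},m_4],
\]
the middle inclusion because $\widetilde\Orth_r(L)\le\widetilde\Orth^+(L)$; hence equality holds throughout, which is the assertion of the theorem. The only non-formal ingredient is the non-vanishing of the Jacobian, which the preceding lemma supplies via a finite Fourier-coefficient computation. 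I expect the only point needing care is the bookkeeping confirming that $\mathrm{div}(\Phi_{14,A_1})$ consists of exactly the mirrors of reflections in the discriminant kernel, so that Theorem \ref{th:j2} applies with no missing or extraneous mirrors; this is immediate from the description of reflections in $\widetilde\Orth^+(L)$ recalled before Theorem \ref{th:j}, together with the identity $J=c\,\Phi_{14,A_1}$.
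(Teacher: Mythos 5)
Your proposal is correct and follows essentially the same route as the paper: identify $J$ with $\Phi_{14}$ via weight count and Koecher's principle, apply the Jacobian criterion to get freeness for the reflection subgroup, and use the fact that the generators are additive lifts to pass to the full discriminant kernel. Your sandwich of graded rings at the end is just a slightly more explicit phrasing of the paper's conclusion that $\widetilde\Orth_r(L)=\widetilde\Orth^+(L)$, and your extra bookkeeping verifying that $\mathrm{div}(J)$ consists exactly of the relevant mirrors with multiplicity one is a welcome (and correct) elaboration of a step the paper leaves implicit.
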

\begin{proof} Since the Jacobian $J = J(m_1, m_3^{(1)}, m_3^{(2)}, m_4)$ is nonzero of weight $14$, Theorem \ref{th:j} implies that $J / \Phi_{14}$ is a nonzero constant. From Theorem \ref{th:j2} we conclude that $M_*(\widetilde \Orth_r(L)) = \mathbb{C}[m_1, m_3^{(1)}, m_3^{(2)}, m_4]$. Since all of the generators are additive lifts, they are modular under the discriminant kernel, and therefore $\widetilde \Orth_r(L) = \widetilde \Orth^+(L)$.
\end{proof}

\begin{remark} From its divisor it is clear that $b_1$ is not a square; in particular, it is not a multiple of $m_1^2$. In fact, it has a quadratic character on $\widetilde \Orth^+(L)$. By contrast all of the products $b_2, b_3, \psi$ are additive lifts.
\end{remark}

The subgroup $\Orth_r(L)$ generated by all reflections in $\Orth^+(L)$ corresponds to the divisor of $\psi \Phi_{14}$. (Note that $\Phi_{14}$ is already a multiple of $b_1$.) From its input form one can see that $m_1$ is already modular under $\Orth_r(L)$. We can produce the unique (up to scalar) modular form $m_3$ of weight three for $\Orth_r(L)$ by writing $m_1 = \Phi_f$ as a theta lift of some $f \in M_{1/2}(\rho_L)$, and defining $m_3 := \Phi_{\vartheta f}$ as the lift of the Serre derivative $$\vartheta f(\tau) = \frac{1}{2\pi i} f'(\tau) - \frac{1}{24} f(\tau) E_2(\tau).$$

\begin{theorem} The graded ring of modular forms for the maximal reflection group $\Orth_r(L)$ is freely generated in weights $1, 3, 4, 6$: $$M_*(\Orth_r(L)) = \mathbb{C}[m_1, m_3, m_4, \psi^2].$$ The Jacobian of the generators equals $\psi \Phi_{14}$ up to a constant multiple.
\end{theorem}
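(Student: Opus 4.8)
The plan is to exhibit $m_1, m_3, m_4, \psi^2$ as modular forms with trivial character on $\Orth_r(L)$ whose Jacobian is a constant times $\psi\Phi_{14}$, and then to invoke the converse Jacobian criterion, Theorem~\ref{th:j2}. First I would check the triviality of the characters. The form $m_1$ is already known to be modular on $\Orth_r(L)$: its vector-valued input $f\in M_{1/2}(\rho_L)$ is fixed by the image of $\Orth_r(L)$ in $\Orth(L'/L)$. Since the $\Orth(L'/L)$-action merely permutes the components of a vector-valued form and commutes both with $\tfrac{d}{d\tau}$ and with multiplication by $E_2$, the Serre derivative $\vartheta f$ has the same invariance, so $m_3=\Phi_{\vartheta f}$ is modular with trivial character on $\Orth_r(L)$; moreover $m_3\neq 0$ because $\vartheta f\neq 0$ and the theta lift is injective (as $L$ splits a unimodular plane). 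Because $\widetilde\Orth^+(L)$ is generated by reflections (Theorem~\ref{th:U+S8_d}) it is contained in $\Orth_r(L)$, so $m_4\in M_*(\widetilde\Orth^+(L))$ spans a one-dimensional, $\Orth_r(L)$-stable subspace (it is the lift of the unique weight-$7/2$ vector-valued cusp form) and is therefore $\Orth_r(L)$-semiinvariant; this cusp form has rational coefficients, so the character of $m_4$ has order at most two, and one checks as in the remark after Theorem~\ref{th:U+U(2)+A1} that it is trivial on $\Orth_r(L)$ (otherwise $m_4$ would vanish on a reflection mirror and be divisible by $b_1$ or $\psi$, which is incompatible with the structure of the low-weight spaces). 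Finally $\psi$ has a quadratic character on $\Orth^+(L)$, so $\psi^2$ is modular without character.

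Next I would form the Jacobian $J:=J(m_1,m_3,m_4,\psi^2)$, which by Theorem~\ref{th:j}(1) is a cusp form of weight $3+(1+3+4+6)=17$ for $\Orth_r(L)$ with the determinant character, and check that $J\not\equiv 0$ by expanding the four forms in Fourier series to low order (exactly as in the lemma preceding Theorem~\ref{th:U+S8_d}). By Theorem~\ref{th:j}(2) this proves that $m_1,m_3,m_4,\psi^2$ are algebraically independent, and by Theorem~\ref{th:j}(3) that $J$ vanishes on every mirror of a reflection in $\Orth_r(L)$. On the other hand $\psi\Phi_{14}$ also has weight $17$, and its divisor is precisely the reduced sum of all mirrors of reflections in $\Orth^+(L)$: the norm-$2$ mirrors make up the divisor of $\Phi_{14}$ (which already contains the divisor of $b_1$), while the remaining orbit, coming from the order-$3$ vectors of norm $2/3$, is the divisor of $\psi$. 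Since $\Orth_r(L)$ is generated by reflections, the character of a holomorphic modular form on it is determined by its vanishing orders along these mirrors; as $J$ and $\psi\Phi_{14}$ have the same divisor, the quotient $J/(\psi\Phi_{14})$ is a holomorphic modular form of weight $0$ and trivial character, hence a nonzero constant, and $J=c\,\psi\Phi_{14}$ for some $c\in\CC^{\times}$.

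It then follows from Theorem~\ref{th:j2} that $M_*(\Orth_r(L))=\CC[m_1,m_3,m_4,\psi^2]$ is freely generated, which completes the proof. I expect the main obstacle to lie not in the (finite) Jacobian computation but in the preparatory bookkeeping: confirming that the mirrors of reflections in $\Orth^+(L)$ are exhausted by the two orbits described above, so that $\div(\psi\Phi_{14})$ is exactly the reflective divisor with multiplicity one, and verifying that the at-most-quadratic character of $m_4$ restricts trivially to the reflection subgroup $\Orth_r(L)$.
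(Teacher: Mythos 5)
Your proposal is correct and follows essentially the same route as the paper: trivial characters for $m_1,m_3,m_4,\psi^2$ on $\Orth_r(L)$, a nonzero Jacobian of weight $17$ compared against $\psi\Phi_{14}$, and an appeal to Theorem~\ref{th:j2}. The only real differences are that the paper obtains the trivial character of $m_4$ more directly -- the components of its weight-$7/2$ input cusp form depend only on the norm of the coset, so the input is invariant under all of $\Orth(L'/L)$ and $m_4$ is modular without character on all of $\Orth^+(L)$, which avoids the somewhat delicate divisibility argument you sketch -- and that it deduces $J\neq 0$ from the algebraic independence already established in Theorem~\ref{th:U+S8_d} rather than from a fresh Fourier computation.
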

\begin{proof} The form $m_4$ is modular without character on $\Orth^+(L)$ because it is the theta lift of a weight $7/2$ vector-valued cusp form $f(\tau) = \sum_{x \in L'/L} f_x(\tau) e_x$ whose component $f_x$ depends only on the norm of $x$. From Theorem \ref{th:U+S8_d} it follows immediately that the Jacobian of the generators is a nonzero multiple of $\psi \Phi_{14}$, and from Theorem \ref{th:j2} we conclude that $M_*(\Orth_r(L)) = \mathbb{C}[m_1,m_3,m_4,\psi^2].$
\end{proof}

\begin{corollary} The graded ring of modular forms for $\Orth^+(L)$ is generated in weights $2, 4, 4, 6, 6$ with a single relation in weight $8$: $$M_*(\Orth^+(L)) = \mathbb{C}[m_1^2, \mathcal{E}_4, m_4, \mathcal{E}_6, \psi^2] / R_8,$$ where $\mathcal{E}_k$ is the Eisenstein series of weight $k$.
\end{corollary}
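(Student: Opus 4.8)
The plan is to obtain $M_*(\Orth^+(L))$ from the free algebra $M_*(\Orth_r(L)) = \CC[m_1,m_3,m_4,\psi^2]$ of the preceding theorem by an invariant-theory argument, and then to exchange two of the resulting generators for the standard Eisenstein series $\mathcal{E}_4,\mathcal{E}_6$.

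The key observation is that the central element $-\mathrm{id}$ lies in $\Orth^+(L)$ and acts on any modular form of weight $k$ by the scalar $(-1)^k$; in particular it negates the nonzero form $m_1$, so $-\mathrm{id}\notin\Orth_r(L)$ and $\Gamma:=\langle\Orth_r(L),-\mathrm{id}\rangle$ contains $\Orth_r(L)$ with index two, with $-\mathrm{id}$ central. Hence $M_*(\Gamma)=M_*(\Orth_r(L))^{\langle-\mathrm{id}\rangle}$ is the even-weight part of $\CC[m_1,m_3,m_4,\psi^2]$, i.e. the subring generated by the monomials of even total degree in the odd-weight generators $m_1,m_3$, so
$$M_*(\Gamma)=\CC[m_1^2,\,m_1m_3,\,m_3^2,\,m_4,\,\psi^2]\big/\big(m_1^2\!\cdot\!m_3^2-(m_1m_3)^2\big),$$
a hypersurface with generators in weights $2,4,6,4,6$ and one relation in weight $8$. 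I would then verify that $\Gamma=\Orth^+(L)$. Since $\widetilde\Orth^+(L)=\widetilde\Orth_r(L)\subseteq\Orth_r(L)\subseteq\Gamma$ and $\Orth^+(L)/\widetilde\Orth^+(L)\cong\Orth(L'/L)$ (because $L$ splits a hyperbolic plane, so $\Orth^+(L)\twoheadrightarrow\Orth(L'/L)$), this reduces to checking that the images in $\Orth(L'/L)$ of $-\mathrm{id}$ and of the reflections in $\Orth^+(L)$ generate $\Orth(L'/L)$, which is a finite computation.

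It then remains to trade $m_1m_3$ (weight $4$) and $m_3^2$ (weight $6$) for $\mathcal{E}_4$ and $\mathcal{E}_6$, both of which are modular on $\Orth^+(L)$ without character because the isotropic coset $0\in L'/L$ is fixed by $\Orth(L)$. From the presentation above, $M_4(\Orth^+(L))$ has basis $\{m_1^4,m_1m_3,m_4\}$, so $\mathcal{E}_4=\alpha\,m_1^4+\beta\,m_1m_3+\gamma\,m_4$ for constants $\alpha,\beta,\gamma$; a comparison of a few Fourier coefficients gives $\beta\ne0$, hence $m_1m_3\in\CC[m_1^2,\mathcal{E}_4,m_4]$. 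Similarly $M_6(\Orth^+(L))$ has basis $\{m_1^6,\,m_1^3m_3,\,m_1^2m_4,\,m_3^2,\,\psi^2\}$; since $m_1^3m_3=m_1^2\cdot m_1m_3$ is already expressible in the new generators, writing $\mathcal{E}_6$ in this basis and checking (again by Fourier expansion) that the coefficient of $m_3^2$ is nonzero gives $m_3^2\in\CC[m_1^2,\mathcal{E}_4,m_4,\mathcal{E}_6,\psi^2]$. Therefore $m_1^2,\mathcal{E}_4,m_4,\mathcal{E}_6,\psi^2$ generate $M_*(\Orth^+(L))$; the passage from $(m_1^2,m_1m_3,m_3^2,m_4,\psi^2)$ to $(m_1^2,\mathcal{E}_4,m_4,\mathcal{E}_6,\psi^2)$ is a triangular, hence invertible, change of generators, so the ideal of relations stays principal, generated by the image $R_8$ of $m_1^2m_3^2-(m_1m_3)^2$. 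This yields the asserted presentation.

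The main obstacle is the step $\Gamma=\Orth^+(L)$: pinning down $\Orth^+(L)$ precisely enough to see that it adds nothing to $\langle\Orth_r(L),-\mathrm{id}\rangle$ requires an explicit analysis of the orthogonal group of this particular lattice and of its discriminant form. Everything after that point is routine linear algebra with Fourier coefficients, parallel to the arguments already carried out for the lattices in the $U\oplus U(2)\oplus R$ and $2U(2)\oplus R$ towers.
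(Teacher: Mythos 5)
Your argument is correct in outline and reaches the right presentation, but it routes the key step differently from the paper. The paper never identifies the quotient $\Orth^+(L)/\Orth_r(L)$ at all: since $M_{1/2}(\rho_L)$ is one-dimensional, its spanning form $f$ is an eigenvector for the whole of $\Orth(L'/L)$ with a quadratic character $\chi$, so $m_1=\Phi_f$ and $m_3=\Phi_{\vartheta f}$ both transform by $\chi$ under all of $\Orth^+(L)$ (the Serre derivative commutes with the $\Orth(L'/L)$-action on inputs), while $m_4$ and $\psi^2$ are invariant; the $\Orth^+(L)$-invariant subring of $\CC[m_1,m_3,m_4,\psi^2]$ is then immediately $\CC[m_1^2,m_1m_3,m_3^2,m_4,\psi^2]$ with the single evident relation, and the generator exchange is as you describe. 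Your route instead pins down the quotient concretely as $\langle -\mathrm{id}\rangle$, which forces you to prove $\Orth^+(L)=\langle\Orth_r(L),-\mathrm{id}\rangle$ --- the step you rightly flag as the obstacle. That step does check out: here $\Orth(L'/L)\cong(\ZZ/2)^2$ (trivial on the $A_1$-part, the Klein four group on the hyperbolic $\mathbb{F}_3$-plane coming from $U(3)$), $-\mathrm{id}$ maps to $-1$, the order-three reflections underlying the divisor of $\psi$ map to the other nontrivial class not equal to the swap, and together with the surjectivity of $\Orth^+(L)\to\Orth(L'/L)$ (Nikulin plus a norm $-2$ vector in the split $U$) these generate everything; combined with $\widetilde\Orth^+(L)=\widetilde\Orth_r(L)$ this gives your equality. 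What the paper's character argument buys is precisely the avoidance of this lattice-specific computation (and of the surjectivity statement), since one only ever needs that each generator is an $\Orth^+(L)$-eigenform with eigenvalue $\pm1$, not that the quotient group is $\ZZ/2$. Note also that you only need $M_*(\Orth^+(L))=M_*(\Gamma)$, which is weaker than $\Gamma=\Orth^+(L)$. One small point in your favour: your condition for the generator exchange (the coefficient of $m_1m_3$ in $\mathcal{E}_4$, resp.\ of $m_3^2$ in $\mathcal{E}_6$, is nonzero) is the precise statement needed, and is stated more carefully than the paper's parenthetical remark.
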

\begin{proof} Since $m_1$ and $m_3$ transform with the same quadratic character under $\Orth^+(L)$, it follows that $$M_*(\Orth^+(L)) = \mathbb{C}[m_1^2, m_1 m_3, m_4, m_3^2, \psi^2] / R_8$$ where the relation is $(m_1^2) \cdot (m_3^2) = (m_1 m_3)^2.$ Using Fourier expansions one can show that the Eisenstein series $\mathcal{E}_4, \mathcal{E}_6$ can be taken as generators instead of $m_1m_3$ and $m_3^2$ (i.e. neither of $\mathcal{E}_4, \mathcal{E}_6$ is a multiple of $m_1$).
\end{proof}

\subsection{The \texorpdfstring{$U\oplus U(3)\oplus A_2$}{} lattice}

We will compute modular forms for the discriminant kernel of the lattice $L = U \oplus U(3) \oplus A_2$ with Gram matrix $\begin{psmallmatrix} 0 & 0 & 0 & 0 & 0 & 1 \\ 0 & 0 & 0 & 0 & 3 & 0 \\ 0 &  0 & 2 & -1 & 0 & 0 \\ 0 & 0 & -1 & 2 & 0 & 0 \\ 0 & 3 & 0 & 0 & 0 & 0 \\ 1 & 0 & 0 & 0 & 0 & 0 \end{psmallmatrix}$ using the pullback to reduce to $U \oplus U(3) \oplus A_1$. With respect to this Gram matrix, $L$ admits Borcherds products whose principal parts are given in Table \ref{tab1:U+U3+A2}.

\begin{table}[htbp]
\centering
\caption{Some Borcherds products for $U \oplus U(3) \oplus A_2$}\label{tab1:U+U3+A2}
\begin{tabular}{l*{3}{c}r}
\hline
Name & Weight & Principal part\\
\hline
$b_1$ & $3$ & $6 e_0 + q^{-1/3} e_{(0, 0, 1/3, 2/3, 0, 0)} + q^{-1/3} e_{(0, 0, 2/3, 1/3, 0, 0)}$ \\
$b_2$ & $3$ & $6 e_0 + q^{-1/3} e_{(0, 0, 1/3, 2/3, 2/3, 0)} + q^{-1/3} e_{(0, 0, 2/3, 1/3, 1/3, 0)}$ \\
$b_3$ & $3$ & $6 e_0 + q^{-1/3} e_{(0, 0, 1/3, 2/3, 1/3, 0)} + q^{-1/3} e_{(0, 0, 2/3, 1/3, 2/3, 0)}$ \\
$b_4$ & $3$ & $6 e_0 + q^{-1/3} e_{(0, 1/3, 0, 0, 1/3, 0)} + q^{-1/3} e_{(0, 2/3, 0, 0, 2/3, 0)}$ \\
$b_5$ & $3$ & $6 e_0 + q^{-1/3} e_{(0, 1/3, 2/3, 1/3, 0, 0)} + q^{-1/3} e_{(0, 2/3, 1/3, 2/3, 0, 0)}$ \\
$b_6$ & $3$ & $6 e_0 + q^{-1/3} e_{(0, 1/3, 1/3, 2/3, 0, 0)} + q^{-1/3} e_{(0, 2/3, 2/3, 1/3, 0, 0)}$ \\
$\Phi_{18}$ & $18$ & $36 e_0 + q^{-1} e_0$ \\
\hline
\end{tabular}
\end{table}

There is a natural embedding of $A_1$ in $A_2$ given by $x \mapsto (x, 0)$ with respect to the Gram matrices $\begin{pmatrix} 2 \end{pmatrix}$ and $\begin{psmallmatrix} 2 & -1 \\ -1 & 2 \end{psmallmatrix}$, and this induces an embedding of the modular variety associated to $U \oplus U(3) \oplus A_1$ in that of $U \oplus U(3) \oplus A_2$ whose image is exactly the divisor of $b_1$.

Finally we will want to consider additive lifts. The dimensions of modular forms for the Weil representation attached to $L$ have generating function $$\sum_{k=0}^{\infty} \mathrm{dim}\, M_k(\rho) t^k = \frac{1 + t + 3t^2 + 5t^3 + 3t^4 + 6t^5 + 3t^6 + 2t^7 + 3t^8}{(1 - t^4)(1 - t^6)}.$$ In particular, the dimensions of spaces of additive lifts of the smallest weights are $$\mathrm{dim}\, \mathrm{Maass}_1(L) = \mathrm{dim}\, \mathrm{Maass}_2(L) = 1, \; \mathrm{dim}\, \mathrm{Maass}_3(L) = 3, \; \mathrm{dim}\, \mathrm{Maass}_4(L) = 5.$$ We denote the additive lift of weight $1$ by $m_1$, and we let $m_4$ be any additive lift of weight $4$ that is not contained in $\mathrm{Maass}_1(L) \cdot \mathrm{Maass}_3(L)$.

\begin{theorem}\label{th:U+U3+A2_d} The six Borcherds products of weight $3$ span a three-dimensional space, and they satisfy the three-term relations $$b_1 + b_6 = b_5, \; b_4 + b_6 = b_2, \; b_4 + b_5 = b_3.$$ All of the weight three products are additive lifts. If $m_3^{(1)}, m_3^{(2)}, m_3^{(3)}$ are any linearly independent Maass lifts of weight three then the graded ring of modular forms for the discriminant kernel of $L$ is freely generated by $m_1, m_3^{(1)}, m_3^{(2)}, m_3^{(3)}, m_4$: $$M_*(\widetilde \Orth^+(L)) = \mathbb{C}[m_1, m_3^{(1)}, m_3^{(2)}, m_3^{(3)}, m_4],$$ and the Jacobian $J(m_1, m_3^{(1)}, m_3^{(2)}, m_3^{(3)}, m_4)$ is a constant multiple of the product $\Phi_{18}$.
\end{theorem}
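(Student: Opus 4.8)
The plan is to mimic the pullback arguments of the preceding subsections. The embedding $A_1 \hookrightarrow A_2$, $x \mapsto (x,0)$, realizes the modular variety of $U \oplus U(3) \oplus A_1$ as the divisor of $b_1$; write $P$ for the induced pullback, which on Maass lifts is computed by the theta-contraction of the input forms and so never requires the orthogonal forms on $L$ themselves. The base of the induction is the structure theorem for $\widetilde\Orth^+(U \oplus U(3) \oplus A_1)$ (Theorem~\ref{th:U+S8_d}), whose generators have weights $1,3,3,4$.

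First I would settle the claims about the weight-three products. Since $L$ splits a unimodular plane over $\mathbb Z$, the additive lift is injective, so $\dim \mathrm{Maass}_3(\widetilde\Orth^+(L)) = \dim M_2(\rho) = 3$. Comparing the first Fourier coefficients of $b_1,\dots,b_6$ with the additive lifts of a basis of $M_2(\rho)$ shows that each $b_i$ is a Maass lift and simultaneously reads off the relations $b_1+b_6=b_5$, $b_4+b_6=b_2$, $b_4+b_5=b_3$; hence $\langle b_1,\dots,b_6\rangle = \mathrm{Maass}_3$ is three-dimensional, with (say) $b_1,b_4,b_6$ a basis.

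Next, for algebraic independence I would push everything down by $P$. By the Koecher principle $P(m_1)\neq 0$ (otherwise $m_1/b_1$ would be a holomorphic form of negative weight), so $P(m_1)$ is a nonzero multiple of the weight-one generator of $M_*(\widetilde\Orth^+(U \oplus U(3) \oplus A_1))$; $P$ annihilates $b_1$, and a theta-contraction computation shows it carries a complement of $\langle b_1\rangle$ in $\mathrm{Maass}_3$ isomorphically onto the two-dimensional weight-three Maass space downstairs; and $P(m_4)$ is a weight-four lift which, by the choice of $m_4$, is not a product of lifts of weights $1$ and $3$. By Theorem~\ref{th:U+S8_d} these four pullbacks are algebraically independent, and since $b_1$ vanishes to first order along the divisor, adjoining it keeps a set of five algebraically independent forms; as transcendence degree is unchanged by an invertible linear substitution within a fixed weight, the same holds for any basis $m_3^{(1)},m_3^{(2)},m_3^{(3)}$ of $\mathrm{Maass}_3$. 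Thus $J := J(m_1,m_3^{(1)},m_3^{(2)},m_3^{(3)},m_4)$ is nonzero of weight $n+\sum k_i = 4 + (1+3+3+3+4) = 18$, and by Theorem~\ref{th:j}(3) it vanishes on every mirror of a reflection in $\widetilde\Orth_r(L)$, i.e.\ on the divisor of $\Phi_{18}$; hence $J/\Phi_{18}$ is a holomorphic modular form of weight $0$, a nonzero constant. Theorem~\ref{th:j2} then gives $M_*(\widetilde\Orth_r(L)) = \mathbb C[m_1,m_3^{(1)},m_3^{(2)},m_3^{(3)},m_4]$ with $\widetilde\Orth_r(L)$ generated by the reflections in the divisor of $\Phi_{18}$, and since all five generators are additive lifts, hence $\widetilde\Orth^+(L)$-modular without character, the chain $\mathbb C[m_1,m_3^{(1)},m_3^{(2)},m_3^{(3)},m_4] \subseteq M_*(\widetilde\Orth^+(L)) \subseteq M_*(\widetilde\Orth_r(L))$ forces equality throughout and $\widetilde\Orth_r(L) = \widetilde\Orth^+(L)$.

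The main obstacle is the explicit computation underlying algebraic independence: one must verify that the pullbacks of the weight-three Maass lifts actually span the full two-dimensional weight-three Maass space on $U \oplus U(3) \oplus A_1$ (equivalently that $\ker(P|_{\mathrm{Maass}_3}) = \langle b_1\rangle$), and that $P(m_4)$ is genuinely a new generator there rather than a product of lifts of weights $1$ and $3$. Everything else — the three-term relations, the non-vanishing of $J$, and the final Hilbert-series comparison — is routine Fourier-coefficient bookkeeping once Theorems~\ref{th:j}, \ref{th:j2} and \ref{th:U+S8_d} are in hand.
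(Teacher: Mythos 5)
Your proposal is correct and follows essentially the same route as the paper: pull back along the divisor of $b_1$ to the known free algebra on $U\oplus U(3)\oplus A_1$, deduce algebraic independence from four forms whose pullbacks generate downstairs plus the simple zero of $b_1$, match the weight-$18$ Jacobian against $\Phi_{18}$ via Theorems \ref{th:j} and \ref{th:j2}, and pass from $\widetilde\Orth_r(L)$ to $\widetilde\Orth^+(L)$ because all generators are additive lifts. The only cosmetic difference is that the paper first bounds $\dim M_3(\widetilde\Orth_r(L))\le 4$ using the pullback before locating the six products inside the Maass space, whereas you compare Fourier coefficients with the lifts directly; the computational content is the same.
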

\begin{proof} Let $\widetilde \Orth_r(L)$ denote the subgroup generated by reflections in $\widetilde \Orth^+(L)$, i.e. the reflections associated to the divisor of $\Phi_{18}$. Then all of the products of weight three have trivial character on $\widetilde \Orth_r(L)$. The pullback map in weight three $$M_3(\widetilde \Orth_r(L)) \longrightarrow M_3(\widetilde \Orth_r(U \oplus U(3) \oplus A_1))$$ has kernel spanned by the product $b_1$, so we obtain $$\mathrm{dim}\, M_3(\widetilde \Orth_r(L)) \le 1 + \mathrm{dim}\, M_3(\widetilde \Orth_r(U \oplus U(3) \oplus A_1)) = 4.$$  By examining the Fourier expansions of $m_1^3$, the six products of weight three, and the additive lifts we see that the dimension is exactly four, that the pullback in weight three is surjective, and that the Borcherds products span the three-dimensional space of additive lifts.

Since $m_4$ is not divisible by any of the weight three Maass lifts, its pullback to $U \oplus U(3) \oplus A_1$ is also not divisible by any of the weight three lifts; and since $m_1$ is not a multiple of $b_1$, its pullback to $U \oplus U(3) \oplus A_1$ is nonzero. In particular, the linear span of $m_1, m_3^{(1)}, m_3^{(2)}, m_3^{(3)}, m_4$ contains four forms whose pullback to $U \oplus U(3) \oplus A_1$ generate the graded ring of modular forms there, and one form with a simple zero along that divisor. Therefore the forms $m_1, m_3^{(1)}, m_3^{(2)}, m_3^{(3)}, m_4$ are algebraically independent. Using Theorems \ref{th:j} and \ref{th:j2} we find that their Jacobian $J$ equals $\Phi_{18}$ up to a constant multiple, that $\widetilde \Orth^+(L)$ is generated by reflections, and that $m_1, m_3^{(1)}, m_3^{(2)}, m_3^{(3)}, m_4$ generate the graded ring of modular forms.
\end{proof}

Finally we compute the graded ring of modular forms for the maximal reflection group. In this case the prospective Jacobian with simple zeros on all mirrors of reflections in $\Orth^+(L)$ is the product $b_1 b_2 b_3 b_4 b_5 b_6 \Phi_{18}$ of weight $36$. The lift $m_1$ constructed earlier is the theta lift of the Weil invariant $$f(\tau) = e_{v_1} - e_{-v_1} + e_{-v_2} - e_{v_2} + e_{v_3} - e_{-v_3} + e_{v_4} - e_{-v_4}$$ where, with respect to the Gram matrix above, we can take 
\begin{align*} 
&v_1 = (0,0,0,0,1/3,0),&  &v_2 = (0,2/3,0,0,0,0),& \\ 
&v_3 = (0,2/3,2/3,1/3,1/3,0),&  &v_4 = (0,2/3,1/3,2/3,1/3,0). &
\end{align*} 
The reflections associated to cosets in the divisors of $b_1,...,b_6$ each swap one pair of cosets in $(v_1+L,...,v_4+L)$ and preserve the remaining two, so $f$ is invariant under $\Orth_r(L)$ and therefore $m_1$ is modular under $\Orth_r(L)$. We produce a further modular form $m_9$ of weight $9$ for $\Orth_r(M)$ as the theta lift of $E_8(\tau) f(\tau) \in M_8(\rho)$, where $E_8(\tau) = 1 + 480 \sum_{n=1}^{\infty} \sigma_7(n) q^n$ is the usual Eisenstein series.

\begin{theorem} The graded ring of modular forms for $\Orth_r(L)$ is freely generated by forms of weights $1, 4, 6, 9, 12$: $$M_*(\Orth_r(L)) = \mathbb{C}[m_1, \mathcal{E}_4, \mathcal{E}_6, m_9, \mathcal{E}_{12}],$$ where $\mathcal{E}_k$ is the Eisenstein series of weight $k$. The Jacobian of the generators is a constant multiple of $J_0 := b_1 b_2 b_3 b_4 b_5 b_6 \Phi_{18}$.
\end{theorem}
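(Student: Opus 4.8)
\emph{Plan.} The proof follows the same scheme as the earlier theorems of this paper: I would first check that each of the five proposed generators is a modular form for $\Orth_r(L)$ with trivial character, then verify that they are algebraically independent, and finally identify their Jacobian with $J_0$ and invoke Theorem~\ref{th:j2}. For modularity, the Eisenstein series $\mathcal{E}_4,\mathcal{E}_6,\mathcal{E}_{12}$ are modular on all of $\Orth^+(L)$ without character by construction. The form $m_1=\Phi_f$ is the theta lift of the Weil invariant $f$ written above, which is invariant under the action of $\Orth_r(L)$ on $\CC[L'/L]$ (as noted before the statement, the generating reflections only permute the cosets $\pm v_1,\dots,\pm v_4$ in a way fixing $f$), so $m_1$ is modular on $\Orth_r(L)$ without character. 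Since $E_8$ is scalar-valued, $E_8 f$ is again $\Orth_r(L)$-invariant, so $m_9=\Phi_{E_8 f}$ is likewise modular on $\Orth_r(L)$ without character.

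\emph{Algebraic independence.} By Theorem~\ref{th:U+U3+A2_d} the discriminant kernel $\widetilde\Orth^+(L)$ is generated by reflections, hence $\widetilde\Orth^+(L)\le\Orth_r(L)$, and all five forms lie in the free polynomial algebra $M_*(\widetilde\Orth^+(L))=\CC[m_1,m_3^{(1)},m_3^{(2)},m_3^{(3)},m_4]$. Expressing them in these coordinates, or equivalently computing their Fourier expansions (to precision $O(q,s)^N$ for a suitable $N$), or restricting to the divisors of the $b_i$ and using the structure theorem for $U\oplus U(3)\oplus A_1$, one verifies that the Jacobian $J:=J(m_1,\mathcal{E}_4,\mathcal{E}_6,m_9,\mathcal{E}_{12})$ is not identically zero.

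\emph{Identification of the Jacobian and conclusion.} For any $r\in L'$ one has $\latt{r,r}\in\frac23\ZZ$, so by the reflection criterion recalled before Theorem~\ref{th:j} a reflection $\sigma_r\in\Orth^+(L)$ satisfies either $\latt{r,r}=2$ with $r\in L$, or $\latt{r,r}=\frac23$ with $\ord(r)=3$. The mirrors of the first kind are exactly the components of $H(1,0)=\div\Phi_{18}$, and those of the second kind are exactly the components appearing in $\div(b_1\cdots b_6)$; hence $J_0=b_1 b_2 b_3 b_4 b_5 b_6\,\Phi_{18}$ is holomorphic of weight $6\cdot 3+18=36$ with divisor equal to the reduced union of all mirrors of reflections in $\Orth_r(L)$. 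By Theorem~\ref{th:j}, $J$ is a nonzero cusp form of weight $4+(1+4+6+9+12)=36$ for $\Orth_r(L)$ with the determinant character which vanishes on each of these mirrors; therefore $J/J_0$ is a holomorphic modular form of weight zero, hence a nonzero constant, and $J=c\,J_0$ for some $c\in\CC^{\times}$. In particular $J$ vanishes exactly on the mirrors of reflections in $\Orth_r(L)$, each with multiplicity one, so Theorem~\ref{th:j2} gives $M_*(\Orth_r(L))=\CC[m_1,\mathcal{E}_4,\mathcal{E}_6,m_9,\mathcal{E}_{12}]$ and reconfirms that $\Orth_r(L)$ is generated by the reflections whose mirrors lie in $\div J_0$.

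\emph{Main obstacle.} The step requiring genuine work is the bookkeeping in the third part: one must check that the six weight-three products of Table~\ref{tab1:U+U3+A2}, together with $\Phi_{18}$, really do exhaust all mirrors of reflections in $\Orth_r(L)$ exactly once, which amounts to enumerating the order-three vectors of norm $\frac23$ in the discriminant form $L'/L$ (of order $27$) and matching their rational quadratic divisors with the Heegner divisors in the listed principal parts. The only other computational input is the Fourier-coefficient verification that $J\not\equiv 0$, i.e. the algebraic independence of the five generators.
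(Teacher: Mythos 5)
Your proposal is correct and follows the same route as the paper: establish modularity of the five lifts on $\Orth_r(L)$, verify algebraic independence either by expressing the forms in the free algebra of Theorem~\ref{th:U+U3+A2_d} or by a Fourier-expansion computation of the Jacobian, and then match the weight-$36$ Jacobian with $J_0 = b_1\cdots b_6\,\Phi_{18}$ via Koecher's principle before invoking Theorem~\ref{th:j2}. The paper's own proof is just a terser version of the same argument (it notes that the Jacobian computation needs precision at least $O(q,s)^{14}$); your additional bookkeeping on which mirrors occur, via the reflection criterion $\latt{r,r}=2$, $r\in L$, versus $\latt{r,r}=\tfrac23$, $\ord(r)=3$, is a correct filling-in of what the paper asserts before the theorem statement.
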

\begin{proof} To prove that these forms are algebraically independent, one can express them in terms of the generators of Theorem \ref{th:U+U3+A2_d} or compute their Jacobian (in this case, showing that the Jacobian is not identically zero requires computing the forms involved to precision at least $O(q, s)^{14}$). Their Jacobian $J$ has weight $36$. By Theorems \ref{th:j} and \ref{th:j2} we conclude that $J = J_0$ up to a constant multiple and that these forms generate $M_*(\Orth_r(L))$.
\end{proof}

\begin{corollary} The graded ring of modular forms for $\Orth^+(L)$ is generated in weights $2,4,6,10,12,18$ with a single relation in weight $20$: $$M_*(\Orth^+(L)) = \mathbb{C}[m_1^2, \mathcal{E}_4, \mathcal{E}_6, m_1 m_9, \mathcal{E}_{12}, m_9^2] / R_{20},$$ where $R_{20}$ is the relation $m_1^2 \cdot m_9^2 = (m_1 m_9)^2$.
\end{corollary}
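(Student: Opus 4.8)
The strategy is to realize $M_*(\Orth^+(L))$ as the ring of invariants of the free algebra $M_*(\Orth_r(L)) = \mathbb{C}[m_1, \mathcal{E}_4, \mathcal{E}_6, m_9, \mathcal{E}_{12}]$ from the previous theorem under the action of the larger group $\Orth^+(L) \supseteq \Orth_r(L)$, so the first step is to record how each generator transforms. The Eisenstein series $\mathcal{E}_4, \mathcal{E}_6, \mathcal{E}_{12}$ are modular without character on all of $\Orth^+(L)$ by construction. For $m_1$ I would use that $M_0(\rho_L)$ is one-dimensional (by the Hilbert series above), spanned by the input form $f$ whose Fourier coefficients lie in $\{0, \pm 1\}$; since $\Orth^+(L)$ normalizes the discriminant kernel and acts on $M_0(\rho_L)$ through permutation matrices $e_v \mapsto e_{gv}$, it acts on $\mathbb{C}f$ via a character $\chi$ whose values must lie in $\{\pm 1\}$ (compare one nonzero Fourier coefficient of $g\cdot f = \chi(g) f$), and hence $g \cdot m_1 = \chi(g)\, m_1$. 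Because $m_9$ is the theta lift of $E_8(\tau) f(\tau)$ with $E_8$ a scalar $\Orth(L'/L)$-invariant form, the same computation gives $g \cdot m_9 = \chi(g)\, m_9$ with the \emph{same} quadratic character $\chi$. Finally $\chi$ is nontrivial: $-\mathrm{id}_L$ lies in $\Orth^+(L)$ and acts trivially on $\mathcal{D}(L)$, so any modular form of weight $k$ for a group containing $-\mathrm{id}_L$ is multiplied by $(-1)^k$ under it; applied to $m_1$, which has weight $1$, this forces $\chi(-\mathrm{id}_L) = -1$.

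With this in hand the invariant ring is easy to identify. Any $F \in M_*(\Orth_r(L))$ is uniquely a polynomial $F = \sum_{i, j \ge 0} P_{ij}(\mathcal{E}_4, \mathcal{E}_6, \mathcal{E}_{12})\, m_1^i m_9^j$, and $g \cdot F = \sum_{i,j} \chi(g)^{i+j} P_{ij}\, m_1^i m_9^j$. Choosing $g$ with $\chi(g) = -1$, the requirement $g \cdot F = F$ forces $P_{ij} = 0$ whenever $i + j$ is odd (by the algebraic independence of $\mathcal{E}_4, \mathcal{E}_6, \mathcal{E}_{12}, m_1, m_9$), while conversely every monomial with $i + j$ even is $\Orth^+(L)$-invariant since $\chi^2 = 1$. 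Hence $M_*(\Orth^+(L)) = \mathbb{C}[\mathcal{E}_4, \mathcal{E}_6, \mathcal{E}_{12}]\big[m_1^2,\, m_1 m_9,\, m_9^2\big]$.

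It remains to extract the presentation. Write $S = \mathbb{C}[\mathcal{E}_4, \mathcal{E}_6, \mathcal{E}_{12}]$, a polynomial ring. The natural surjection $S[X, W, U] \to M_*(\Orth^+(L))$ sending $X, W, U$ to $m_1^2, m_1 m_9, m_9^2$ kills $XU - W^2$, and its kernel is exactly the principal ideal $(XU - W^2)$: since $m_1$ and $m_9$ are algebraically independent over $S$, this is the classical statement that the relations of the quadric cone $S[X,W,U]/(XU-W^2)$ are generated by the obvious quadric (equivalently, a surjection of graded integral domains of equal Krull dimension is an isomorphism, and $S[X,W,U]/(XU-W^2)$ is a domain of dimension $5 = \mathrm{tr.deg}\, M_*(\Orth^+(L))$). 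Tracking weights — $m_1^2$ has weight $2$, $m_1 m_9$ weight $10$, $m_9^2$ weight $18$, the Eisenstein series weights $4, 6, 12$ — gives generators in weights $2, 4, 6, 10, 12, 18$ and the relation $XU - W^2$ in weight $20$, that is, $M_*(\Orth^+(L)) = \mathbb{C}[m_1^2, \mathcal{E}_4, \mathcal{E}_6, m_1 m_9, \mathcal{E}_{12}, m_9^2]/R_{20}$ with $R_{20}\colon m_1^2 \cdot m_9^2 = (m_1 m_9)^2$, as claimed.

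The only genuinely delicate point is the character bookkeeping of the first paragraph: that $m_1$ and $m_9$ carry one and the same quadratic character, and that this character is nontrivial. Everything afterwards is the standard invariant theory of $\mathbb{Z}/2$ acting by $\pm 1$ on two of the generators, whose ring of invariants is the quadric cone $\mathbb{C}[x^2, xy, y^2] = \mathbb{C}[X, W, U]/(XU - W^2)$.
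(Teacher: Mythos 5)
Your proof is correct and follows the same route as the paper's, which simply asserts that $m_1$ and $m_9$ transform under $\Orth^+(L)$ with one and the same quadratic character while the Eisenstein series are invariant, and leaves the $\mathbb{Z}/2$ invariant theory ($\mathbb{C}[x^2,xy,y^2]\cong\mathbb{C}[X,W,U]/(XU-W^2)$) implicit. You in fact supply a detail the paper omits — the nontriviality of the character, via $\chi(-\mathrm{id}_L)=(-1)^{k}=-1$ applied to the weight-one form $m_1$ — without which the stated presentation would not exhaust $M_*(\Orth^+(L))$.
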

\begin{proof} This is because $m_1, m_9$ transform under $\Orth^+(L)$ with the same quadratic character by construction, and because the Eisenstein series are modular forms for $\Orth^+(L)$ with trivial character.
\end{proof}

\section{The \texorpdfstring{$U\oplus U(4)\oplus A_1$}{} lattice}\label{sec:6}

Let $L = U \oplus U(4) \oplus A_1$. We will determine the algebras of modular forms for the maximal reflection groups in $\Orth^+(L)$ and $\widetilde \Orth^+(L)$. These modular forms are essentially the same as modular forms for a level four subgroup of $\mathrm{Sp}_4(\mathbb{Z})$, and similar results have appeared in the work of Aoki--Ibukiyama \cite{AI}.

We first compute some relevant Borcherds products.

\begin{table}[htbp]
\centering
\caption{Some Borcherds products for $U \oplus U(4) \oplus A_1$}
\begin{tabular}{l*{3}{c}r}
\hline
Name & Weight & Principal part\\
\hline
$\theta$ & $1/2$ & $1 e_0 + q^{-1/4} e_{(0, 1/2, 1/2, 1/2, 0)}$ \\
$b_1$ & $2$ & $4 e_0 + q^{-1/4} e_{(0, 0, 1/2, 1/4, 0)} + q^{-1/4} e_{(0,0,1/2,3/4,0)}$ \\
$b_2$ & $2$ & $4 e_0 + q^{-1/4} e_{(0, 1/4, 1/2, 0, 0)} + q^{-1/4} e_{(0,3/4,1/2,0,0)}$ \\
$b_3$ & $2$ & $4 e_0 + q^{-1/4} e_{(0,1/4,0,1/4,0)} + q^{-1/4} e_{(0,3/4,0,3/4,0)}$ \\
$\Phi_{11}$ & $11$ & $22 e_0 + q^{-1} e_0$ \\
\hline
\end{tabular}
\end{table}

The space of vector-valued modular forms of weight $1/2$ for the Weil representation $\rho$ attached to $L$ is one-dimensional, spanned by the unary theta series 
\begin{align*} f(\tau) &= (1 + 2q + 2q^4 + 2q^9 + ...) (e_{v_1} - e_{-v_1} + e_{v_2} - e_{-v_2} + e_{v_3} - e_{v_3}) \\ &+ (2q^{1/4} + 2q^{9/4} + 2q^{25/4} + ...) (e_{w_1} - e_{-w_1} + e_{w_2} - e_{-w_2} + e_{w_3} - e_{-w_3})
\end{align*} 
where $v_i, w_i$ are the cosets 
\begin{align*} v_1 &= (0,0,0,1/4,0) &w_1 &= (0,3/4,0,1/4,0) \\ v_2 &= (0,3/4,0,0,0) &w_2 &= (0,1/2,1/2,1/4) \\ v_3 &= (0,3/4,1/2,1/4,0) &w_3 &= (0,3/4,1/2,1/2,0). 
\end{align*}
We let $m_1 = \Phi_f$ be the lift of $f$.

The vector-valued modular forms for the Weil representation attached to $\rho$ have the Hilbert series $$\sum_{k=0}^{\infty} \mathrm{dim}\, M_{k+1/2}(\rho) t^k = \frac{1 + 3t + 2t^2 + 4t^3 + 2t^4 + 3t^5 + t^6}{(1 - t^2)(1 - t^6)},$$ and by injectivity these correspond to the dimensions of the Maass subspaces. The first few dimensions are $$\mathrm{dim}\, \mathrm{Maass}_1(\Gamma) = 1, \; \mathrm{dim}\, \mathrm{Maass}_2(\Gamma) = \mathrm{dim}\, \mathrm{Maass}_3(\Gamma) = 3.$$ In particular, we can find two linearly independent lifts $m_2^{(1)}, m_2^{(2)}$ which are independent of $m_1^2$ (which turns out also to be a lift).  Finally we let $m_3 := \Phi_{\vartheta f}$ be the theta lift of the Serre derivative of $f$: $$\vartheta f(\tau) = \frac{1}{2\pi i} f'(\tau) - \frac{1}{24} E_2(\tau) f(\tau).$$

\begin{lemma} The Jacobian $J = J(m_1, m_2^{(1)}, m_2^{(2)}, m_3)$ is not identically zero.
\end{lemma}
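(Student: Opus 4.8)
The plan is to reduce the statement to a finite Fourier coefficient check, as in the analogous lemma for $U\oplus U(3)\oplus A_1$. Observe first that $J$ has weight $n+\sum_i k_i=3+(1+2+2+3)=11$, and by Theorem~\ref{th:j}(3) it vanishes on the mirror $\cD_r(L)$ of every reflection in $\widetilde\Orth^+(L)$; these mirrors are precisely the components of the Heegner divisor $H(1,0)$ (the $\cD_r$ with $r\in L$ primitive of norm $2$), which is also the divisor of $\Phi_{11}$ with multiplicity one. Hence $J/\Phi_{11}$ extends to a holomorphic modular form of weight $0$, so by Koecher's principle $J=c\,\Phi_{11}$ for some constant $c$, and the lemma asserts exactly that $c\neq 0$. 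By Theorem~\ref{th:j}(2) this is equivalent to the algebraic independence of $m_1,m_2^{(1)},m_2^{(2)},m_3$.

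To see that $c\neq 0$ I would compute directly. All four forms are additive theta lifts of explicit holomorphic vector-valued modular forms for $\rho$: one has $m_1=\Phi_f$ and $m_3=\Phi_{\vartheta f}$ for $f$ the unary theta series spanning $M_{1/2}(\rho)$ written above and $\vartheta$ the Serre derivative, while $m_2^{(1)},m_2^{(2)}$ are lifts of weight $3/2$ forms chosen linearly independent of $m_1^2$ inside the $3$-dimensional space $\mathrm{Maass}_2(\widetilde\Orth^+(L))$. Using the lift formula in the theorem of \cite{Bo}, each of these becomes an explicit Fourier series on the tube domain $\mathbb{H}_K$ at the $1$-dimensional isotropic cusp, where $K=U(4)\oplus A_1$; then in fixed coordinates $z_1,z_2,z_3$ on $\mathbb{H}_K$ one assembles the matrix with rows $(k_iF_i,\partial_{z_1}F_i,\partial_{z_2}F_i,\partial_{z_3}F_i)$, takes the $4\times4$ determinant, and exhibits a single nonzero Fourier coefficient. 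Truncating the expansions modulo $O(q,s)^5$ should suffice, exactly as for $U\oplus U(3)\oplus A_1$; this computation simultaneously confirms that $m_3\neq 0$ and that the four forms are defined as claimed.

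I expect the only real obstacle to be organizational: normalizing the four input forms consistently and carrying the theta lift together with the three partial derivatives far enough that a nonzero coefficient of the determinant is visible at the chosen truncation (this precision should be confirmed rather than assumed). There is no structural difficulty, and, in contrast to the towers in earlier sections, no pullback is available to shorten the argument since $U\oplus U(4)\oplus A_1$ is the base lattice of this section. Alternatively, once $J=c\,\Phi_{11}$ is established one may replace the $4\times4$ determinant computation by a single coefficient comparison of $J$ with $\Phi_{11}$.
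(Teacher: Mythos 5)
Your proposal is correct and follows essentially the same route as the paper: the paper's proof of this lemma is precisely a direct computation of the Fourier expansions of $m_1, m_2^{(1)}, m_2^{(2)}, m_3$ to precision $O(q,s)^5$ (noting, as you do, that nonvanishing is independent of the choices made), and the identification $J = c\,\Phi_{11}$ that you use to frame the computation is exactly the argument the paper deploys in the theorem immediately following the lemma.
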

\begin{proof} Whether the Jacobian is nonzero is independent of the choices of $m_2^{(1)}, m_2^{(2)}, m_3$ above, and it requires computing their Fourier expansions only to precision $O(q, s)^5$.
\end{proof}

\begin{theorem} The discriminant kernel of $L$ is generated by reflections. Its graded ring of modular forms is a free algebra generated in weights $1, 2, 2, 3$: $$M_*(\widetilde \Orth^+(L)) = \mathbb{C}[m_1, m_2^{(1)}, m_2^{(2)}, m_3].$$ The Jacobian $J$ of the generators is a nonzero multiple of $\Phi_{11}$.
\end{theorem}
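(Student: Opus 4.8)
The plan is to follow the template established in the previous sections, applying the Jacobian criterion of Theorems \ref{th:j} and \ref{th:j2}. First I would note that $L$ has signature $(3,2)$, so the Jacobian $J = J(m_1, m_2^{(1)}, m_2^{(2)}, m_3)$ is a modular form of weight $3 + (1+2+2+3) = 11$ for $\widetilde{\Orth}^+(L)$ carrying the determinant character, and it is not identically zero by the preceding lemma. Since $\widetilde{\Orth}_r(L)$ is generated precisely by the reflections $\sigma_r$ with $r \in L$ primitive of norm $\langle r,r\rangle = 2$, and the divisor of the Borcherds product $\Phi_{11}$ is exactly $H(1,0) = \bigcup_{r\in L,\, Q(r)=1}\mathcal{D}_r(L)$ with each component appearing once, Theorem \ref{th:j} shows that $J/\Phi_{11}$ is a holomorphic modular form of weight zero. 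By Koecher's principle it is constant, and it is nonzero because $J\neq 0$; hence $J = c\,\Phi_{11}$ for some $c\in\mathbb{C}^\times$.

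Next I would invoke the converse, Theorem \ref{th:j2}. The four forms $m_1, m_2^{(1)}, m_2^{(2)}, m_3$ are additive theta lifts of vector-valued modular forms, so by Borcherds' construction they are modular on the full discriminant kernel $\widetilde{\Orth}^+(L)$ without character, in particular on $\widetilde{\Orth}_r(L)$. Their Jacobian, being a nonzero multiple of $\Phi_{11}$, vanishes exactly on the mirrors of reflections in $\widetilde{\Orth}_r(L)$, each with multiplicity one. Theorem \ref{th:j2} then gives simultaneously that $\widetilde{\Orth}_r(L)$ is generated by reflections and that $M_*(\widetilde{\Orth}_r(L)) = \mathbb{C}[m_1, m_2^{(1)}, m_2^{(2)}, m_3]$.

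It remains to upgrade $\widetilde{\Orth}_r(L)$ to $\widetilde{\Orth}^+(L)$. Since all four generators are $\widetilde{\Orth}^+(L)$-modular without character, $\mathbb{C}[m_1, m_2^{(1)}, m_2^{(2)}, m_3] \subseteq M_*(\widetilde{\Orth}^+(L))$, while the inclusion $\widetilde{\Orth}_r(L)\le\widetilde{\Orth}^+(L)$ gives the reverse containment $M_*(\widetilde{\Orth}^+(L)) \subseteq M_*(\widetilde{\Orth}_r(L))$. Hence the two graded rings coincide, and, as in the earlier cases in this paper, this forces $\widetilde{\Orth}_r(L) = \widetilde{\Orth}^+(L)$, which completes the proof.

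I expect the only genuine obstacle to be the non-vanishing of the Jacobian $J$, which is precisely the lemma stated just above and which I would verify by computing the Fourier expansions of the four generators to order $O(q,s)^5$; once that is in hand the remainder of the argument is purely formal. A secondary point needing attention — though routine given the setup recalled before Theorem \ref{th:j} — is the verification that the divisor of $\Phi_{11}$ accounts for \emph{every} mirror of a reflection in the discriminant kernel, each with multiplicity exactly one, since this is exactly what licenses the application of Theorem \ref{th:j2}.
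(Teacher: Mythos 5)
Your proposal is correct and follows essentially the same route as the paper: the nonvanishing of $J$ (from the preceding lemma) plus the weight count $3+1+2+2+3=11$ forces $J$ to be a nonzero constant multiple of $\Phi_{11}$ via Theorem \ref{th:j} and Koecher's principle, Theorem \ref{th:j2} then yields freeness for $\widetilde\Orth_r(L)$, and the fact that all four generators are additive theta lifts (hence modular without character on all of $\widetilde\Orth^+(L)$, whose reflections are exactly those with mirrors in $H(1,0)=\operatorname{div}\Phi_{11}$) upgrades the conclusion to the full discriminant kernel. The two points you flag as needing verification are precisely the ones the paper relies on, so nothing is missing.
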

\begin{proof} Theorem \ref{th:j} implies that $J = \Phi_{11}$ (up to a multiple), and Theorem \ref{th:j2} implies that the ring of modular forms for the reflection group $\widetilde \Orth_r(L)$ is $$M_*(\widetilde \Orth_r(L)) = \mathbb{C}[m_1, m_2^{(1)}, m_2^{(2)}, m_3].$$ Since all generators are theta lifts, they are modular under the a priori larger group $\widetilde \Orth^+(L)$, and we obtain $\widetilde \Orth_r(L) = \widetilde \Orth^+(L)$.
\end{proof}

\begin{remark} (i) Since $\widetilde \Orth^+(L)$ is generated by reflections, the singular-weight product $\theta$ transforms with a multiplier system of order two, and therefore $\theta^2 \in M_1(\widetilde \Orth^+(L))$. From the structure theorem we obtain $\theta^2 = m_1$. \\
(ii) The weight two products $b_1,b_2,b_3$ transform without character on $\widetilde \Orth^+(L)$, so together with $\theta^4$ they span the three-dimensional space $M_2(\widetilde \Orth^+(L))$.
\end{remark}

The ring of modular forms for the maximal reflection group $\Orth_r(L)$ can be determined by constructing the generators exactly as for the lattice $U\oplus U(3) \oplus A_1$. To obtain $\Orth_r(L)$ we have to add to $\widetilde \Orth^+(L)$ the reflections whose mirrors lie in the divisor of $b_1 b_2 b_3$. One can check that the theta function $f(\tau)$ that lifts to $m_1$ is invariant under all of these reflections, so $m_1$ is modular on $\Orth_r(L)$ without character. By construction the weight $3$ lift $m_3$ is also modular on $\Orth_r(L)$ without character.

\begin{theorem} The graded ring of modular forms for $\Orth_r(L)$ is freely generated in weights $1, 3, 4, 6$: $$M_*(\Orth_r(L)) = \mathbb{C}[m_1, m_3, \mathcal{E}_4, \mathcal{E}_6],$$ where $\mathcal{E}_4, \mathcal{E}_6$ are the Eisenstein series. The Jacobian of the generators is a constant multiple of $b_1 b_2 b_3 \Phi_{11}$.
\end{theorem}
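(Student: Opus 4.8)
The plan is to run the Jacobian criterion of Theorem~\ref{th:j2} with the four candidate generators $m_1, m_3, \mathcal{E}_4, \mathcal{E}_6$, exactly as in the previous sections. First I would check that these are modular forms of trivial character on $\Orth_r(L)$. The Eisenstein series $\mathcal{E}_4,\mathcal{E}_6$ are modular without character on all of $\Orth^+(L)$, hence on $\Orth_r(L)$. For $m_1$ and $m_3$ this was already arranged in the discussion preceding the theorem: the input theta series $f$ and its Serre derivative $\vartheta f$ are invariant under the reflections whose mirrors lie in the divisors of $b_1,b_2,b_3$, and $m_1, m_3$ are modular under $\widetilde\Orth^+(L)$; since $\Orth_r(L)$ is generated by $\widetilde\Orth^+(L)$ (which equals $\widetilde\Orth_r(L)$, hence is a reflection group contained in $\Orth_r(L)$) together with those extra reflections, $m_1$ and $m_3$ are modular on $\Orth_r(L)$ without character.

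Next I would establish algebraic independence. Because $\widetilde\Orth^+(L)\le\Orth_r(L)$, all four forms lie in $M_*(\widetilde\Orth^+(L))=\mathbb{C}[m_1,m_2^{(1)},m_2^{(2)},m_3]$, so one may write $\mathcal{E}_4$ and $\mathcal{E}_6$ explicitly as polynomials in those four free generators by matching a few Fourier coefficients. Algebraic independence of $m_1,m_3,\mathcal{E}_4,\mathcal{E}_6$ then reduces to the nonvanishing of a determinant of polynomials, equivalently to the statement that the orthogonal Jacobian $J:=J(m_1,m_3,\mathcal{E}_4,\mathcal{E}_6)$ is not identically zero; as in the weight-$1/2$ lemma above, this is a finite computation with low-order Fourier expansions.

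The heart of the matter is the identification of $J$. By Theorem~\ref{th:j}, $J$ is a cusp form of weight $n+(1+3+4+6)=3+14=17$ for the determinant character, and it vanishes on the mirror of every reflection in $\Orth_r(L)$. The product $J_0:=b_1b_2b_3\Phi_{11}$ also has weight $2+2+2+11=17$, with simple zeros along $H(1,0)$ (the mirrors coming from $\widetilde\Orth^+(L)$, via $\Phi_{11}$) and along the three norm-$1/4$ Heegner divisors attached to order-four vectors (via $b_1,b_2,b_3$), and these components are pairwise distinct. The key point to verify is that $\div J_0$ is \emph{exactly} the set of mirrors of reflections in $\Orth^+(L)$, each with multiplicity one: a primitive $r\in L'$ yields a reflection in $\Orth^+(L)$ precisely when $\latt{r,r}=\tfrac{2}{d}$ with $\ord(r)\in\{d,d/2\}$, and a short case analysis over the discriminant form of $L=U\oplus U(4)\oplus A_1$ (order $32$) shows the only possibilities are $\latt{r,r}=2$ with $r\in L$ and $\latt{r,r}=\tfrac12$ with $\ord(r)=4$, matching the components of $J_0$. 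Granting this, $J$ vanishes on every mirror in $\div J_0$, so $J/J_0$ is holomorphic and of weight zero, hence constant by Koecher's principle; as $J\ne0$ this gives $J=c\,J_0$ with $c\ne0$.

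Finally, since $J=c\,b_1b_2b_3\Phi_{11}$ vanishes with multiplicity exactly one on precisely the mirrors of reflections in $\Orth^+(L)$, and $\Orth_r(L)$ as a reflection group contains exactly those reflections, Theorem~\ref{th:j2} applies with $\Gamma=\Orth_r(L)$ and yields both that $\Orth_r(L)$ is generated by the reflections whose mirrors lie in $\div J$ and that $M_*(\Orth_r(L))=\mathbb{C}[m_1,m_3,\mathcal{E}_4,\mathcal{E}_6]$ is freely generated, with Jacobian a constant multiple of $b_1b_2b_3\Phi_{11}$. I expect the main obstacle to be the bookkeeping in the classification of reflection vectors over the discriminant form of $L$; the nonvanishing of $J$ is a routine finite computation, and the remaining steps repeat the pattern already used throughout the paper.
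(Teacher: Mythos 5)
Your proposal is correct and follows essentially the same route as the paper: verify the generators are modular without character on $\Orth_r(L)$ (Eisenstein series trivially, $m_1$ and $m_3$ via the invariance of the input theta series $f$ and its Serre derivative under the added reflections), check $J=J(m_1,m_3,\mathcal{E}_4,\mathcal{E}_6)\neq 0$ by a finite Fourier computation, match its weight $17$ against $J_0=b_1b_2b_3\Phi_{11}$, conclude $J=cJ_0$ by Koecher's principle, and apply Theorem~\ref{th:j2}. Your extra case analysis of reflection vectors over the discriminant form is a sensible sanity check but is not strictly needed, since once $J=cJ_0$ with $c\neq 0$ the divisor of $J$ is forced to coincide with the set of mirrors with multiplicity one (note only that the order-two cosets of norm $-1/4$, e.g.\ the divisor of $\theta$, give mirrors already contained in $H(1,0)$).
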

\begin{proof} By computing Fourier expansions to at least precision $O(q, s)^8$ one can see that the Jacobian $J = J(m_1,m_3,\mathcal{E}_4,\mathcal{E}_6)$ is nonzero. $J$ has weight $17$ and by Theorem \ref{th:j} vanishes on all rational quadratic divisors that appear in the divisor of $J_0 := b_1 b_2 b_3 \Phi_{11}$. By Koecher's principle $J = J_0$ up to a nonzero multiple, and Theorem \ref{th:j2} implies that $m_1,m_3,\mathcal{E}_4,\mathcal{E}_6$ generate $M_*(\Orth_r(L))$.
\end{proof}

\begin{corollary} The ring of modular forms for $\Orth^+(L)$ is generated in weights $2, 4, 4, 6, 6$ with a relation in weight $8$: $$M_*(\Orth^+(L)) = \mathbb{C}[m_1^2, m_1 m_3, \mathcal{E}_4, m_3^2, \mathcal{E}_6] / R_8$$ where $R_8$ is the relation $m_1^2 \cdot m_3^2 = (m_1 m_3)^2$.
\end{corollary}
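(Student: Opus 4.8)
The plan is to obtain $M_*(\Orth^+(L))$ from the free algebra $M_*(\Orth_r(L)) = \mathbb{C}[m_1, m_3, \mathcal{E}_4, \mathcal{E}_6]$ established in the preceding theorem, by realizing it as an invariant subring. Conjugation permutes the reflections of $\Orth^+(L)$ (indeed $g\sigma_r g^{-1} = \sigma_{gr}$), so $\Orth_r(L)$ is normal in $\Orth^+(L)$, and every $F \in M_*(\Orth^+(L))$ a fortiori lies in $M_*(\Orth_r(L))$; hence $M_*(\Orth^+(L))$ is exactly the subring of $\mathbb{C}[m_1, m_3, \mathcal{E}_4, \mathcal{E}_6]$ fixed by the induced action of $G := \Orth^+(L)/\Orth_r(L)$.

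First I would pin down this action on the four generators. The Eisenstein series $\mathcal{E}_4, \mathcal{E}_6$ are modular for all of $\Orth^+(L)$ with trivial character, hence $G$-fixed. For $m_1 = \Phi_f$: the space $M_1(\Orth_r(L)) = \mathbb{C}m_1$ is one-dimensional, so each $g \in \Orth^+(L)$ acts by $g\cdot m_1 = \chi(g)\,m_1$ for a character $\chi$ of $\Orth^+(L)$ which is trivial on $\Orth_r(L)$; since $L$ splits a hyperbolic plane the additive theta lift is injective on holomorphic input, which forces $g\cdot f = \chi(g)\,f$ at the level of vector-valued forms, and since the Serre derivative commutes with the $\Orth(L'/L)$-action we get $g\cdot m_3 = \Phi_{\vartheta(g\cdot f)} = \chi(g)\,m_3$ as well. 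Thus $m_1$ and $m_3$ transform by the same character, a monomial $m_1^a m_3^b \mathcal{E}_4^c \mathcal{E}_6^d$ transforms by $\chi^{a+b}$, and the $G$-action factors through $\chi$.

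It remains to see that $\chi$ is a nontrivial quadratic character, which I expect to be the only non-formal point. Nontriviality is immediate: $-\mathrm{id}_L$ acts trivially on the projective domain $\mathcal{D}(L)$ and hence lies in $\Orth^+(L)$, and it scales any weight-$k$ modular form by $(-1)^k$; applied to the weight-one form $m_1$ this gives $\chi(-\mathrm{id}_L) = -1$ (in particular $-\mathrm{id}_L \notin \Orth_r(L)$, since $m_1$ has odd weight and trivial character on $\Orth_r(L)$). That $\chi$ has order exactly two, equivalently $[\Orth^+(L):\Orth_r(L)] = 2$ with $G$ generated by the class of $-\mathrm{id}_L$, follows from the explicit structure of $\Orth^+(L)/\widetilde \Orth^+(L)$ as a subgroup of the finite orthogonal group $\Orth(L'/L)$, or from the description of these groups in \cite{AI}. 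Alternatively one can exhibit the extra generator directly as the Fricke-type involution attached to the $U(4)$-summand and check that it is not a product of reflections of $\Orth^+(L)$.

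Given that $\chi$ is quadratic and nontrivial, $M_*(\Orth^+(L))$ is the linear span of the monomials $m_1^a m_3^b \mathcal{E}_4^c \mathcal{E}_6^d$ with $a+b$ even, i.e. $\mathbb{C}[m_1^2, m_1 m_3, m_3^2]\otimes_{\mathbb{C}}\mathbb{C}[\mathcal{E}_4,\mathcal{E}_6]$. The first factor is the classical $\pm 1$-invariant ring $\mathbb{C}[m_1,m_3]^{\mathbb{Z}/2} \cong \mathbb{C}[U,V,W]/(UW - V^2)$ — reduce any relation modulo $UW - V^2$ to a normal form that is linear in $V$, then separate monomials of $\mathbb{C}[m_1,m_3]$ by the parity of the $m_1$-exponent — and tensoring with the polynomial ring $\mathbb{C}[\mathcal{E}_4,\mathcal{E}_6]$ preserves this presentation because $\mathcal{E}_4,\mathcal{E}_6$ are algebraically independent of $m_1,m_3$. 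Relabeling the five generators $m_1^2, m_1 m_3, \mathcal{E}_4, m_3^2, \mathcal{E}_6$ by their weights $2,4,4,6,6$, the unique relation becomes $R_8 : m_1^2\cdot m_3^2 = (m_1 m_3)^2$ in weight $8$, which is the assertion.
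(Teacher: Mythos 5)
Your argument is correct and is essentially the paper's: the corollary is deduced exactly as in the analogous statement for $U\oplus U(3)\oplus A_1$, namely from the fact that $m_1$ and $m_3$ transform under $\Orth^+(L)$ with the same nontrivial quadratic character while $\mathcal{E}_4,\mathcal{E}_6$ are invariant, followed by the standard computation of the $(\pm 1)$-invariant subring. The one step you flag as non-formal --- that $\chi$ has order exactly two --- does not actually require the structure of $\Orth^+(L)/\widetilde{\Orth}^+(L)$ or an appeal to \cite{AI}: you have already shown via injectivity of the theta lift that the induced automorphism of $L'/L$ sends $f$ to $\chi(g)f$, and since $f$ is the unary theta series with rational Fourier coefficients and the $\Orth(L'/L)$-action merely permutes components (hence preserves rationality), $\chi(g)$ is a rational root of unity and therefore $\pm 1$; this is precisely the rationality argument the paper uses elsewhere for such characters. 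Note also that ``$\chi$ has order two'' is not equivalent to $[\Orth^+(L):\Orth_r(L)]=2$ (the quotient could a priori be larger while acting through $\chi$), but only the former is needed for the invariant-ring computation, so this overstatement is harmless.
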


\section{The \texorpdfstring{$U\oplus S_8$}{} lattice}\label{sec:7}

As before, we let $S_8$ denote the lattice $\mathbb{Z}^3$ with Gram matrix $\begin{psmallmatrix} -2 & 1 & 1 \\ 1 & 2 & 1 \\ 1 & 1 & 2 \end{psmallmatrix}$ and genus symbol $8_3^{+1}$. Let $L = U \oplus S_8$. In this section we will show that the graded ring of modular forms for $\widetilde \Orth^+(L)$ is freely generated by additive lifts of weights $2, 4, 5, 6$. Computing the coefficients of a dual Eisenstein series yields Borcherds products with the following principal parts:

\begin{table}[htbp]
\centering
\caption{Some Borcherds products for $U \oplus S_8$}
\begin{tabular}{l*{3}{c}r}
\hline
Name & Weight & Principal part\\
\hline
$\psi$ & $5$ & $10e_0 + q^{-1/4} e_{(0, 1/4, 1/4, 1/4, 0)} + q^{-1/4} e_{(0, 3/4, 3/4, 3/4, 0)}$ \\
$\Phi_{20}$ & $20$ & $40 e_0 + q^{-1} e_0$ \\
\hline
\end{tabular}
\end{table}

The dimensions of modular forms for the Weil representation attached to $L$ have the generating function $$\sum_{k=0}^{\infty} \mathrm{dim}\, M_{k + 3/2}(\rho) t^k = \frac{1 + 2t^2 + t^3 + 2t^4 + t^5 + t^7}{(1 - t^4)(1 - t^6)}.$$ Since the additive lift is injective, we obtain the dimensions $$\mathrm{dim}\, \mathrm{Maass}_2(\Gamma) = 1, \; \mathrm{dim}\, \mathrm{Maass}_4(\Gamma) = 2, \; \mathrm{dim}\, \mathrm{Maass}_5(\Gamma) = 1, \; \mathrm{dim}\, \mathrm{Maass}_6(\Gamma) = 3.$$ In particular there is a unique normalized lift $m_2$ of weight two, and there are lifts of weights $4$ and $6$ that are not multiples of $m_2$. (Computing Fourier expansions shows that on can take the Eisenstein series $\mathcal{E}_4$ and $\mathcal{E}_6$ of weights $4$ and $6$.) Finally we obtain a one-dimensional space of additive lifts of weight $5$, which is spanned by the product $\psi$.

\begin{theorem} The modular groups $\widetilde \Orth^+(L)$ and $\Orth^+(L)$ are generated by reflections and their graded rings of modular forms are free algebras: 
\begin{align*}
M_*(\widetilde \Orth^+(L)) &= \mathbb{C}[m_2, \mathcal{E}_4, \psi, \mathcal{E}_6],\\
M_*(\Orth^+(L)) &= \mathbb{C}[m_2, \mathcal{E}_4, \mathcal{E}_6, \psi^2].
\end{align*}
\end{theorem}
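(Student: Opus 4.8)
The plan is to run the Jacobian criterion of Theorems \ref{th:j} and \ref{th:j2} exactly as in the previous sections, once for the discriminant kernel and once for the full orthogonal group.

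For the discriminant kernel I would take the four additive lifts $m_2,\mathcal{E}_4,\psi,\mathcal{E}_6$ of weights $2,4,5,6$ as the prospective generators; since $L$ splits a unimodular plane, all four are theta lifts and hence modular without character on $\widetilde\Orth^+(L)$, and a fortiori on the subgroup $\widetilde\Orth_r(L)$ generated by reflections in the discriminant kernel. Their Jacobian $J=J(m_2,\mathcal{E}_4,\psi,\mathcal{E}_6)$ has weight $3+(2+4+5+6)=20$, which matches the weight of $\Phi_{20}$. The first step is to verify that $J\not\equiv 0$, equivalently that the four lifts are algebraically independent; following the analogous lemmas above, this should be a finite check of Fourier expansions to precision $O(q,s)^5$. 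Granting $J\neq 0$, Theorem \ref{th:j}(3) forces $J$ to vanish on every mirror of a reflection in $\widetilde\Orth_r(L)$, in particular on $\div\Phi_{20}$, so $J/\Phi_{20}$ is a holomorphic form of weight $0$ and hence a nonzero constant by Koecher's principle. If $\widetilde\Orth_r(L)$ had a reflection whose mirror lay outside $\div\Phi_{20}$, then $J$ would acquire an extra zero and the quotient could not be constant; so $\div\Phi_{20}$ is exactly the union of the mirrors of reflections in $\widetilde\Orth_r(L)$, each with multiplicity one, and Theorem \ref{th:j2} yields $M_*(\widetilde\Orth_r(L))=\mathbb{C}[m_2,\mathcal{E}_4,\psi,\mathcal{E}_6]$. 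Since the four generators are modular on the entire discriminant kernel, the inclusion $M_*(\widetilde\Orth^+(L))\subseteq M_*(\widetilde\Orth_r(L))$ is forced to be an equality, whence $\widetilde\Orth_r(L)=\widetilde\Orth^+(L)$.

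For $\Orth^+(L)$ the prospective Jacobian is $\psi\,\Phi_{20}$ of weight $25$, with prospective generators $m_2,\mathcal{E}_4,\mathcal{E}_6,\psi^2$ of weights $2,4,6,10$. Each is modular without character on $\Orth^+(L)$: the Eisenstein series by construction; $m_2$ because it is the lift of the unique normalized weight-$3/2$ vector-valued form, whose character under $\Orth(L'/L)$ must be trivial as it fixes the nonzero $e_0$-component; and $\psi^2$ because $\psi$ is the lift of the unique normalized vector-valued form in its weight, hence $\Orth(L'/L)$-invariant up to a character of order at most two. The elementary identity $J(m_2,\mathcal{E}_4,\mathcal{E}_6,\psi^2)=\pm 2\psi\cdot J(m_2,\mathcal{E}_4,\psi,\mathcal{E}_6)$ shows this Jacobian is nonzero of weight $25$. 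The divisor of $\psi\,\Phi_{20}$ is the union of the mirrors of the norm-$2$ vectors of $L$ (contributed by $\Phi_{20}$) and of the order-$4$, norm-$\tfrac12$ vectors of $L'$ (contributed by $\psi$), both giving genuine reflections of $\Orth^+(L)$, each appearing with multiplicity one; the same divisibility-and-weight argument then gives $J(m_2,\mathcal{E}_4,\mathcal{E}_6,\psi^2)=c\,\psi\,\Phi_{20}$ with $c\neq 0$ and identifies $\div(\psi\Phi_{20})$ with the full set of mirrors of reflections in $\Orth^+(L)$. By Theorem \ref{th:j2}, $M_*(\Orth_r(L))=\mathbb{C}[m_2,\mathcal{E}_4,\mathcal{E}_6,\psi^2]$; and since all four generators are already modular on $\Orth^+(L)$, we get $\mathbb{C}[m_2,\mathcal{E}_4,\mathcal{E}_6,\psi^2]\subseteq M_*(\Orth^+(L))\subseteq M_*(\Orth_r(L))=\mathbb{C}[m_2,\mathcal{E}_4,\mathcal{E}_6,\psi^2]$, so $M_*(\Orth^+(L))$ equals this free algebra and, being free, $\Orth^+(L)$ is generated by reflections by \cite{VP89}, i.e. $\Orth^+(L)=\Orth_r(L)$.

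The only step that is not purely formal is the nonvanishing of $J(m_2,\mathcal{E}_4,\psi,\mathcal{E}_6)$; I expect this to be the main---though routine---obstacle, needing an explicit finite computation of Fourier coefficients, after which everything follows from Theorems \ref{th:j} and \ref{th:j2}, Koecher's principle, the dimension count of the Maass subspaces recorded above, and the Jacobian identity. Alternatively, one could establish the algebraic independence by pulling back along the embedding $U(2)\oplus S_8\hookrightarrow U\oplus S_8$ (induced by $U(2)\hookrightarrow U$) and invoking the known structure of $M_*(\widetilde\Orth^+(U(2)\oplus S_8))$, but the direct Fourier check seems shorter.
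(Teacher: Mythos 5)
Your proposal is correct and follows essentially the same route as the paper: both parts run the Jacobian criterion (Theorems \ref{th:j} and \ref{th:j2}) with the same generators, the same prospective Jacobians $\Phi_{20}$ and $\psi\,\Phi_{20}$, the same finite Fourier-coefficient check of nonvanishing, and the same modularity arguments for $m_2$ and $\psi^2$ on $\Orth^+(L)$. The only difference is that you spell out a few details the paper leaves implicit (the identity $J(m_2,\mathcal{E}_4,\mathcal{E}_6,\psi^2)=\pm 2\psi\, J(m_2,\mathcal{E}_4,\psi,\mathcal{E}_6)$ and the description of $\operatorname{div}(\psi\Phi_{20})$), which is harmless elaboration rather than a different method.
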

\begin{proof} (i) The Jacobian $J = J(m_2, \mathcal{E}_4, \psi, \mathcal{E}_6)$ has weight $20$ and by computing Fourier expansions we see that it is not identically zero. Since all of the generators are additive lifts, Theorems \ref{th:j}, \ref{th:j2} and the argument in the previous sections shows that $\widetilde \Orth^+(L)$ is generated by reflections and that $M_*(\widetilde \Orth^+(L))$ is generated by $m_2, \mathcal{E}_4, \psi, \mathcal{E}_6$.

(ii) By (i), the Jacobian of the generators is a nonzero multiple of $\psi \Phi_{20}$. The input form whose additive lift is $m_2$ is invariant under $\Orth^+(L)$, so $m_2$ itself is modular under $\Orth^+(L)$. Since $\psi$ has at worst a quadratic character, it follows that all of the claimed generators are modular forms for $\Orth^+(L)$. Applying Theorems \ref{th:j}, \ref{th:j2} we see that $\Orth^+(L)$ is generated by reflections and that $M_*(\Orth^+(L))$ is generated by $m_2, \mathcal{E}_4, \mathcal{E}_6, \psi^2$.
\end{proof}

\section{The \texorpdfstring{$2U(3)\oplus A_1$}{}  and \texorpdfstring{$U \oplus U(2) \oplus A_1(2)$}{}  lattices}\label{sec:8}

In this section we discuss the algebras of modular forms for the lattices $L = 2U(3) \oplus A_1$ and $L = U \oplus U(2) \oplus A_1(2)$.  It was proved in \cite{WW20b} that for the first lattice $M_*(\widetilde\Orth^+(L))$ is freely generated by four forms of weight 1.  As mentioned in the introduction, $M_*(\widetilde\Orth_r(L))$ is not free for the second lattice. However, there is a free algebra of modular forms in weights $1,1,1,2$ for a reflection group slightly larger than $\widetilde\Orth_r(L)$ (see \cite{WW20b}).  It remains to determine the maximal reflection subgroups $\Orth_r(L)$ in $\Orth^+(L)$ and to determine the graded rings of modular forms for these groups.

\subsection{The \texorpdfstring{$2U(3)\oplus A_1$}{} lattice}

Let $L = 2U(3) \oplus A_1$. From Section 7 of \cite{WW20b} recall that $L$ admits $29$ holomorphic Borcherds products of weight $1$, of which one distinguished product denoted $\Delta_1$ has prinicipal part $2e_0 + q^{-1/4} e_v$ where $v$ has order two in $L'/L$, and that the ring of orthogonal modular forms for the discriminant kernel $M_*(\widetilde \Orth^+(L))$ is the polynomial algebra on any four linearly independent Borcherds products of weight $1$ other than $\Delta_1$. The form $\Delta_1$ turns out to play a special role in the structure of the algebra $M_*(\Orth^+(L))$. Recall that there are also twelve reflective Borcherds products of weight $1$ for this lattice, denoted $b_1,...,b_{12}$, each with a principal part of the form $2 e_0 + q^{-1/3} e_v + q^{-1/3} e_{-v}$ where $v$ has order $3$ in $L'/L$, and that there is a product $\Phi_7$ of weight $7$ whose principal part is $14e_0 + q^{-1} e_0$.

There is a two-dimensional space of vector-valued cusp forms of weight $3/2$ for the Weil representation attached to $L$, spanned by two forms $f_1, f_2$ of the form $$f(\tau) = \frac{\eta(2\tau)^5}{\eta(4\tau)^2} \sum_{\substack{x \in L'/L \\ Q(x) = -1/12 + \mathbb{Z}}} \varepsilon(x) e_x + 2 \frac{\eta(\tau)^2 \eta(4 \tau)^2}{\eta(2\tau)} \sum_{\substack{x \in L'/L \\ Q(x) = -1/3 + \mathbb{Z}}} \varepsilon(x) e_x$$ where $\varepsilon(x) \in \{-1, 0, 1\}$. Under the theta lift these are mapped to a one-dimensional space, spanned by the square $\Delta_1^2$. In particular, $\Delta_1^2$ transforms without character on the maximal reflection group $\Orth_r(L)$ (as it is a square) and with at worst a quadratic character on the full group $\Orth^+(L)$ (since this is true for all of the weight $3/2$ vector-valued cusp forms).

A direct calculation shows that the kernel of the additive lift contains no forms with rational Fourier coefficients; it is spanned by the form $g = f_1 + e^{2\pi i / 3} f_2$. For each $f_i$, there exists $c_i\in\CC$ such that $c_i\Delta_1^2=\Phi_f$. For any $\sigma \in \Orth^+(L)$, we have $\sigma(c\Delta_1^2)=\Phi_{\sigma(f)}$, which implies that $\sigma(f)\pm f$ has rational coefficients and lifts to zero, so $\sigma(f) \pm f = 0$. Therefore, each of $f_1, f_2$ are modular under $\Orth^+(L)$ with the same quadratic character $\chi$, and the kernel form $g$ is also modular under $\Orth^+(L)$ with the character $\chi$.

On the other hand, the Serre derivatives $\vartheta f_1, \vartheta f_2$ lift to a two-dimensional space of cusp forms of weight $4$ under the theta lift. Any form contained in this space transforms with the same character as $\Delta_1^2$ under $\Orth^+(L)$, since the Serre derivative commutes on input forms with the action of $\Orth^+(L)$. We let $m_4$ denote the additive theta lift of the Serre derivative of either $f_1$ or $f_2$ (the choice will not matter).

\begin{theorem} The group $\Orth^+(L)$ is generated by reflections. The graded algebra $M_*(\Orth^+(L))$ is freely generated in weights $2, 4, 4, 6$: $$M_*(\Orth^+(L)) = \mathbb{C}[\Delta_1^2, \mathcal{E}_4, m_4, \mathcal{E}_6],$$ where $\mathcal{E}_k$ is the Eisenstein series of weight $k$. 
\end{theorem}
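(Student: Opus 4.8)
The plan is to apply the Jacobian criterion of Theorems~\ref{th:j} and~\ref{th:j2}, following the same pattern as in the previous sections. First I would pin down the prospective Jacobian. Since $L$ has signature $(3,2)$ and the proposed generators have weights $2,4,4,6$, the Jacobian $J(\Delta_1^2,\mathcal{E}_4,m_4,\mathcal{E}_6)$ must be a cusp form of weight $3+(2+4+4+6)=19$ which has a simple zero along every mirror of a reflection in $\Orth^+(L)$. I claim this divisor coincides with that of $J_0:=\Phi_7\cdot b_1\cdots b_{12}$, which also has weight $7+12=19$. Indeed, the quadratic form on $L'/L$ forces a reflection vector $r\in L'$ to have norm $\langle r,r\rangle\in\{2,\,2/3,\,1/2\}$: these correspond respectively to the norm-two vectors of $L$ (mirrors in the divisor of $\Phi_7$), the order-three vectors of norm $2/3$ (mirrors in the divisors of the $b_i$), and the order-two vectors $r$ of norm $1/2$ witnessed by $\Delta_1$ --- but for the last type one has $2r\in L$ with $\langle 2r,2r\rangle=2$ and $\mathcal{D}_r=\mathcal{D}_{2r}$, so those mirrors already occur in the divisor of $\Phi_7$ and $\Delta_1$ contributes no extra factor. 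Thus $J_0$ has a first-order zero exactly along the reflection mirrors and no zero elsewhere; one also checks, using the classification of Borcherds products in \cite{WW20b}, that $\Phi_7$, $b_1,\dots,b_{12}$ and $\Delta_1$ account for all reflections of $\Orth^+(L)$.

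Next I would run the criterion on the reflection subgroup $\Orth_r(L)$ first. The forms $\mathcal{E}_4,\mathcal{E}_6$ are modular on $\Orth^+(L)$ without character, being theta lifts of the vector-valued Eisenstein series attached to the isotropic coset $0$, which is fixed by $\Orth(L'/L)$. The form $\Delta_1^2$ is a square and $m_4$ was constructed to carry the same character as $\Delta_1^2$; since $\Delta_1$ has only simple zeros along a reflective divisor, its character restricts to an order-$\le 2$ character of $\Orth_r(L)$, so both $\Delta_1^2$ and $m_4$ are modular on $\Orth_r(L)$ with trivial character. By Theorem~\ref{th:j}(3) their Jacobian vanishes on all mirrors of reflections in $\Orth_r(L)$, hence is divisible by $J_0$, and as both have weight $19$ they agree up to a constant; that constant is nonzero because a short Fourier-coefficient computation shows $J(\Delta_1^2,\mathcal{E}_4,m_4,\mathcal{E}_6)\not\equiv 0$ (equivalently, the four forms are algebraically independent). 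Theorem~\ref{th:j2} then yields $M_*(\Orth_r(L))=\mathbb{C}[\Delta_1^2,\mathcal{E}_4,m_4,\mathcal{E}_6]$, and that $\Orth_r(L)$ is generated by the reflections whose mirrors lie in the divisor of $J_0$.

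It remains to upgrade from $\Orth_r(L)$ to $\Orth^+(L)$, and this is the step I expect to be the main obstacle. Concretely one must show that the quadratic character $\chi$ with which $\Delta_1^2$ and $m_4$ transform under $\Orth^+(L)$ is trivial; since $\chi$ is already trivial on $\Orth_r(L)$ by the previous paragraph, this is equivalent to $\Orth^+(L)=\Orth_r(L)$, i.e.\ to $\Orth^+(L)$ being generated by reflections. The natural route is a finite computation: choose coset representatives for $\Orth^+(L)/\widetilde\Orth^+(L)$ inside $\Orth(L'/L)$ and verify that each acts trivially on the two-dimensional space of weight-$3/2$ cusp forms spanned by $f_1,f_2$ (equivalently, that its image lies in the subgroup generated by the images of the reflections). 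This is precisely the point at which $2U(3)\oplus A_1$ differs from $2U(2)\oplus A_2$, where the analogous character is genuinely of order two and $M_*(\Orth^+)$ is consequently not free. Granting $\chi=1$, all four generators lie in $M_*(\Orth^+(L))$, so
\[
\mathbb{C}[\Delta_1^2,\mathcal{E}_4,m_4,\mathcal{E}_6]=M_*(\Orth_r(L))\supseteq M_*(\Orth^+(L))\supseteq\mathbb{C}[\Delta_1^2,\mathcal{E}_4,m_4,\mathcal{E}_6],
\]
forcing equality; since $M_*(\Orth^+(L))$ is then a free algebra, $\Orth^+(L)$ must be generated by reflections by \cite{VP89}, which completes the proof. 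Alternatively, once $\chi$ is known to be trivial one may apply Theorem~\ref{th:j2} directly to $\Gamma=\Orth^+(L)$ and obtain both assertions of the theorem at once. The remaining ingredients --- computing the principal parts of the Borcherds products, checking that the divisor of $J_0$ contains no superfluous components, and verifying nonvanishing of the Jacobian --- are routine.
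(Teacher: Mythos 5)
Your argument for the reflection subgroup $\Orth_r(L)$ is essentially the paper's: identify $J_0=\Phi_7\prod_{i=1}^{12}b_i$ of weight $19$ as the prospective Jacobian (your observation that the mirrors coming from $\Delta_1$ are already components of the divisor of $\Phi_7$, via $\mathcal D_r=\mathcal D_{2r}$, is correct and is implicit in the paper), check nonvanishing of $J(\Delta_1^2,\mathcal E_4,m_4,\mathcal E_6)$ by a Fourier computation, compare weights, and apply Theorems \ref{th:j} and \ref{th:j2} to get $M_*(\Orth_r(L))=\CC[\Delta_1^2,\mathcal E_4,m_4,\mathcal E_6]$. Up to that point the proposal is fine.

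The gap is exactly where you say it is: the triviality of the quadratic character $\chi$ of $\Delta_1^2$ (hence of $m_4$) on $\Orth^+(L)$. You do not prove this; you only assert that ``the natural route is a finite computation'' on the image of $\Orth^+(L)$ in $\Orth(L'/L)$ acting on $S_{3/2}(\rho)$, and then proceed under the hypothesis ``granting $\chi=1$''. That computation is well-posed and would in principle settle the matter, but as written the crucial step of the theorem is deferred rather than established, so the proof is incomplete. The paper closes this gap without any further computation by a short formal argument: the kernel form $g=f_1+e^{2\pi i/3}f_2$ of the additive lift transforms under $\Orth^+(L)$ with the same character $\chi$ as $f_1,f_2$, and one checks (using the already-established structure of $M_*(\Orth_r(L))$, which reduces the verification to two Fourier coefficients) that $\Delta_1^4=\Phi_{\vartheta g}$ up to a constant. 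Since the Serre derivative commutes with the $\Orth^+(L)$-action on input forms, $\Delta_1^4$ transforms with $\chi$; but $\Delta_1^4=(\Delta_1^2)^2$ transforms with $\chi^2$, so $\chi=\chi^2$ is trivial. If you want to keep your computational route, you must actually carry out the verification on coset representatives; otherwise you should substitute an argument of this kind. The concluding deduction (free algebra for $\Orth^+(L)$, hence generation by reflections via \cite{VP89}) is then fine.
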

\begin{proof} By computing its Fourier series we find that the Jacobian $J$ of the claimed generators is not identically zero, and has weight $19$. (Note that $J$ being nonzero is independent of the choice of $m_4$.) Since $J_0 := \Phi_7 \cdot \prod_{i=1}^{12} b_i$ has simple zeros exactly on the mirrors of reflections in $\Orth^+(L)$, Theorem \ref{th:j} and the Koecher principle imply that $J$ is a nonzero multiple of $J_0$. From Theorem \ref{th:j2} we conclude that $M_*(\Orth_r(L))$ is freely generated by $\Delta_1^2$, $\mathcal{E}_4$, $m_4$ and $\mathcal{E}_6$.

To see that $\Delta_1^2$ (and therefore $m_4$) has trivial character on $\Orth^+(L)$, one can argue as follows. $\Delta_1^4$ is itself a theta lift; namely, up to constant multiples, $\Delta_1^4 = \Phi_{\vartheta g}$ where $g$ is the theta kernel and $\vartheta g$ is the Serre derivative. (Proving this rigorously involves computing only two Fourier coefficients, due to the structure theorem for $M_*(\Orth_r(L))$. Then $\vartheta g$ transforms in the same way as $g$ under the action of $\Orth^+(L)$.  It follows that $\Delta_1^4$ transforms under $\Orth^+(L)$ with the same character as $\Delta_1^2$ and therefore that the character is trivial.
\end{proof}

\subsection{The \texorpdfstring{$U \oplus U(2) \oplus A_1(2)$}{} lattice}

Let $L = U \oplus U(2) \oplus A_1(2)$, with Gram matrix $\begin{psmallmatrix} 0 & 0 & 0 & 0 & 1\\ 0 & 0 & 0 & 2 & 0 \\ 0 & 0 & 4 & 0 & 0 \\ 0 & 2 & 0 & 0 & 0 \\ 1 & 0 & 0 & 0 & 0 \end{psmallmatrix}$. The lattice $L$ admits Borcherds products with principal parts as in Table \ref{tab:U+U(2)+A1(2)} below.

\begin{table}[htbp]
\centering
\caption{Some Borcherds products for $U \oplus U(2) \oplus A_1(2)$}\label{tab:U+U(2)+A1(2)}
\begin{tabular}{l*{3}{c}r}
\hline
Name & Weight & Principal part\\
\hline
$b_1$ & $1$ & $2 e_0 + q^{-1/8} e_{(0, 0, 1/4, 0, 0)} + q^{-1/8} e_{(0, 0, 3/4, 0, 0)}$ \\
$b_2$ & $1$ & $2 e_0 + q^{-1/8} e_{(0, 0, 1/4, 1/2, 0)} + q^{-1/8} e_{(0, 0, 3/4, 1/2, 0)}$ \\
$b_3$ & $1$ & $2 e_0 + q^{-1/8} e_{(0, 1/2, 1/4, 0, 0)} + q^{-1/8} e_{(0, 1/2, 3/4, 0, 0)}$ \\
$f_1$ & $4$ & $8 e_0 + q^{-1/2} e_{(0,0,1/2,1/2,0)}$ \\
$f_2$ & $4$ & $8 e_0 + q^{-1/2} e_{(0,1/2,1/2,0,0)}$ \\
$f_3$ & $4$ & $8 e_0 + q^{-1/2} e_{(0,1/2,0,1/2,0)}$ \\
$\Phi_4$ & $4$ & $8 e_0 + q^{-1} e_0$ \\
$\Psi_7$ & $7$ & $14 e_0 + q^{-1/2} e_{(0, 0, 1/2, 0, 0)}$ \\
\hline
\end{tabular}
\end{table}

Note that the quotient $\Psi_4 := \frac{\Psi_7}{b_1 b_2 b_3}$ is holomorphic even though some of the coefficients of the principal part of its input form are negative.

It was shown in \cite{WW20b} that the graded ring of modular forms for the subgroup $\Orth_1(L) \le \Orth^+(L)$ generated by reflections associated to the divisor of $\Phi_4 \Psi_4$ is freely generated by $b_1, b_2, b_3$ and an additive lift $m_2$ of weight two. (Note that the ring of modular forms for maximal reflection group in $\widetilde \Orth^+(L)$ cannot be freely generated, since the prospective Jacobian $\Phi_4$ is not a cusp form.)

\begin{theorem} The group $\Orth^+(L)$ is generated by reflections. The graded ring of modular forms for $\Orth^+(L)$ is the free algebra on generators of weights $2, 4, 6, 8$: $$M_*(\Orth^+(L)) = \mathbb{C}[m_2, \mathcal{E}_4, \mathcal{E}_6, \mathcal{E}_8].$$ The Jacobian of the generators is a constant multiple of $f_1 f_2 f_3 \Phi_4 \Psi_7 = b_1 b_2 b_3 f_1 f_2 f_3 \Phi_4 \Psi_4$.
\end{theorem}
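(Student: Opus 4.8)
The plan is to apply the Jacobian criteria of Theorems~\ref{th:j} and~\ref{th:j2} with $\Gamma=\Orth^+(L)$, following the pattern of the previous sections.

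First I would check that the four proposed generators are modular on $\Orth^+(L)$ without character. This is clear by construction for the Eisenstein series $\mathcal{E}_4,\mathcal{E}_6,\mathcal{E}_8$, and for $m_2$ it follows because (as in \cite{WW20b}) $m_2$ is the theta lift of a vector-valued modular form of weight $3/2$ with rational Fourier coefficients and nonzero $e_0$-component, hence invariant under $\Orth(L'/L)$. Since $\Orth_1(L)\le\Orth^+(L)$ we have $M_*(\Orth^+(L))\subseteq M_*(\Orth_1(L))=\CC[b_1,b_2,b_3,m_2]$; writing $\mathcal{E}_4,\mathcal{E}_6,\mathcal{E}_8$ in terms of $b_1,b_2,b_3,m_2$ (equivalently, computing the Fourier expansion of the Jacobian $J:=J(m_2,\mathcal{E}_4,\mathcal{E}_6,\mathcal{E}_8)$ to low precision) shows that these forms are algebraically independent, so $J\neq0$ and $J$ has weight $3+(2+4+6+8)=23$.

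The heart of the argument is to identify the divisor of the weight $23$ form $J_0:=f_1f_2f_3\Phi_4\Psi_7$ (equivalently $b_1b_2b_3f_1f_2f_3\Phi_4\Psi_4$, using $\Psi_7=b_1b_2b_3\Psi_4$) with the sum of all mirrors of reflections in $\Orth^+(L)$, each occurring with multiplicity one. By the reflectivity criterion recalled just before Theorem~\ref{th:j}, a primitive $r\in L'$ of positive norm gives a reflection in $\Orth^+(L)$ exactly when $\latt{r,r}=2$ with $r\in L$, or $\latt{r,r}=1$ with $\ord(r)=2$, or $\latt{r,r}=1/4$ with $\ord(r)=4$; no further possibilities arise, because $L'/L\cong(\ZZ/2)^2\times\ZZ/4$ is a $2$-group on which $Q$ takes only the values $0,1/8,1/2,5/8\pmod{\ZZ}$. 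Running through the relevant cosets: the first type gives precisely $\div(\Phi_4)=H(1,0)$; the four order $2$ cosets of norm $1/2$ are exactly those occurring in the principal parts of $f_1,f_2,f_3$ and $\Psi_7$; and for an order $4$ vector $r$ with $\latt{r,r}=1/4$ one has $\mathcal{D}_r=\mathcal{D}_{2r}$ with $2r$ of order $2$ and norm $1/2$, so its mirror already lies in $\div(\Psi_7)$ — this is the geometric content of the factorization $\Psi_7=b_1b_2b_3\Psi_4$. Since these five Heegner divisors are pairwise disjoint and each Borcherds factor of $J_0$ has multiplicity one along its own divisor, $\div(J_0)$ is the multiplicity-free sum of all reflective mirrors.

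Finally, part (3) of Theorem~\ref{th:j} gives $\div(J)\ge\div(J_0)$, and equality of weights forces $\div(J)=\div(J_0)$; hence $J/J_0$ is a holomorphic modular form of weight zero, i.e.\ a nonzero constant by Koecher's principle (nonzero by the algebraic independence above). Thus $J$ vanishes exactly on the mirrors of reflections in $\Orth^+(L)$ with multiplicity one, and Theorem~\ref{th:j2} yields both that $\Orth^+(L)$ is generated by reflections and that $M_*(\Orth^+(L))=\CC[m_2,\mathcal{E}_4,\mathcal{E}_6,\mathcal{E}_8]$. I expect the only delicate point to be the mirror enumeration in the previous paragraph — in particular verifying that the order $4$ reflections contribute nothing beyond $\div(\Psi_7)$, and that the four order $2$ norm-$1/2$ cosets are precisely those of $f_1,f_2,f_3,\Psi_7$ (none missed, and none coinciding with $H(1,0)$); everything else is routine bookkeeping with Fourier expansions.
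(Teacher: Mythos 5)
Your proposal follows the same route as the paper: show the four forms are $\Orth^+(L)$-invariant, check algebraic independence, match the weight-$23$ Jacobian against $J_0=f_1f_2f_3\Phi_4\Psi_7$, and invoke Theorems~\ref{th:j} and~\ref{th:j2}. Your mirror enumeration --- which you flag as the delicate point --- is correct and is in fact more detail than the paper supplies: only $d\in\{1,2,8\}$ occur, the four order-two norm-$1/2$ cosets are exactly those of $f_1,f_2,f_3,\Psi_7$, and the order-four norm-$1/8$ vectors $r$ satisfy $2r\in L+(0,0,1/2)$, so their mirrors are absorbed into $\operatorname{div}\Psi_7$. The one step where your justification is too quick is the $\Orth^+(L)$-invariance of $m_2$: ``rational Fourier coefficients and nonzero $e_0$-component'' only forces triviality of the character once you know the input form spans a one-dimensional $\Orth(L'/L)$-stable subspace (this is how the analogous argument is run for $U\oplus U(2)\oplus A_1$, where $\dim M_{3/2}(\rho)=1$ is cited explicitly), and you have not verified that here. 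The paper sidesteps this by exhibiting the input form as the manifestly invariant combination $E_{3/2,0}-\sum_{Q(x)=0,\,x\neq 0}E_{3/2,x}$ of (non-holomorphic) weight-$3/2$ Eisenstein series; you should either do the same or check the relevant dimension. With that repair the argument is complete.
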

\begin{proof} The Eisenstein series $\mathcal{E}_4, \mathcal{E}_6, \mathcal{E}_8$ are clearly modular forms without character on $\Orth^+(L)$. The input function to $m_2$ in the theta lift can be constructed as a linear combination of (non-holomorphic) Eisenstein series of weight $3/2$: $$E_{3/2, 0} - \sum_{\substack{x \in L'/L \\Q(x) = 0,\, x \ne 0}} E_{3/2, x},$$ which is easily seen to be invariant under all automorphisms of the discriminant form, such that $m_2$ is also modular without character on $\Orth^+(L)$. By computing Fourier expansions we find that the Jacobian is not identically zero and is of weight $23$. Since $J_0 := f_1 f_2 f_3 \Phi_4 \Psi_7$ has weight $23$ and a simple zero on all mirrors of reflections in $\Orth^+(L)$, Theorem \ref{th:j} and the Koecher principle imply that $J = J_0$ up to a nonzero constant, and Theorem \ref{th:j2} implies that $\Orth^+(L)$ is generated by reflections and that $m_2, \mathcal{E}_4, \mathcal{E}_6, \mathcal{E}_8$ generate $M_*(\Orth^+(L))$.
\end{proof}

\subsection{The remaining cases}\label{sec:8.3} To complete the proof of Theorem \ref{Maintheorem}, we have to prove the last five cases in Table \ref{Maintab1}. For any even lattice $L$ and any positive integer $a$ we have
$$
\Orth^+(L(a))=\Orth^+(L)=\Orth^+(L')=\Orth^+(L'(a)).
$$
Using this trick, we have the following isomorphisms:
\begin{align*}
&\Orth^+(2U(2)\oplus A_1(2))=\Orth^+(2U\oplus A_1)=\Orth^+(2U(4)\oplus A_1),\\
&\Orth^+(U(4)\oplus U(2)\oplus A_1)=\Orth^+(U\oplus U(2)\oplus A_1),\\
&\Orth^+(2U(3)\oplus A_2)=\Orth^+(2U\oplus A_2).
\end{align*}
From \cite[Lemma 6.1]{GN18}, we deduce that 
$$
\Orth^+(2U(2)\oplus A_1)=\Orth^+(2U\oplus A_1(2)).
$$
This reduces these algebras to four cases which are already known.

\section{The \texorpdfstring{$U \oplus U(2) \oplus 2A_1$}{} lattice} \label{sec:9}

The lattice $L = U \oplus U(2) \oplus 2A_1$ with Gram matrix $\begin{psmallmatrix} 0 & 0 & 0 & 0 & 0 & 1 \\ 0 & 0 & 0 & 0 & 2 & 0 \\ 0 & 0 & 2 & 0 & 0 & 0 \\ 0 & 0 & 0 & 2 & 0 & 0 \\ 0 & 2 & 0 & 0 & 0 & 0 \\ 1 & 0 & 0 & 0 & 0 & 0 \end{psmallmatrix}$ is not simple, as the dual Weil representation attached to $L$ admits a two-dimensional space of cusp forms of weight $3$, spanned by the forms $$f_1(\tau) = \eta(\tau)^6 (e_{(0,0,1/2,0,1/2,0)} + e_{(0,1/2,0,1/2,0,0)} - e_{(0,0,0,1/2,1/2,0)} - e_{(0,1/2,1/2,0,0,0)})$$ and $$f_2(\tau) = \eta(\tau)^6 (e_{(0,0,1/2,0,0,0)} + e_{(0,1/2,1/2,0,0,0)} - e_{(0,0,0,1/2,0,0)} -e_{(0,1/2,0,1/2,0,0)}).$$ Nevertheless, it behaves similarly to simple lattices in many ways, since the obstruction principle \cite{Bo2} implies that every Heegner divisor which is invariant under swapping the two $A_1$ components occurs as the divisor of a Borcherds product. By computing the Fourier coefficients of the Eisenstein series one can obtain the Borcherds products in Table \ref{tab:U+U(2)+2A1}.

\begin{table}[htbp]
\centering
\caption{Some Borcherds products for $U \oplus U(2) \oplus 2A_1$}\label{tab:U+U(2)+2A1}
\begin{tabular}{l*{3}{c}r}
\hline
Name & Weight & Principal part\\
\hline
$b_1$ & $4$ & $8 e_0 + q^{-1/2} e_{(0, 0, 1/2, 1/2, 1/2, 0)}$ \\
$b_2$ & $4$ & $8 e_0 + q^{-1/2} e_{(0, 1/2, 1/2, 1/2, 0, 0)}$ \\
$b_3$ & $4$ & $8 e_0 + q^{-1/2} e_{(0, 1/2, 0, 0,1/2, 0)}$ \\
$\psi$ & $6$ & $12 e_0 + q^{-1/2} e_{(0, 0, 1/2, 1/2, 0, 0)}$ \\
$\Phi_{18}$ & $18$ & $36 e_0 + q^{-1} e_0$ \\
\hline
\end{tabular}
\end{table}

The involution $\varepsilon(x_1,x_2,x_3,x_4,x_5,x_6) = (x_1,x_2,x_4,x_3,x_5,x_6)$ on $L$ induces an involution (also labelled $\varepsilon$) of $\mathbb{C}[L'/L]$ and splits the spaces of vector-valued modular forms for $\rho_L$ naturally into eigenspaces: $$M_k(\rho) = M_{k, +}(\rho) \oplus M_{k, -}(\rho) \; \text{where} \; M_{k, \pm}(\rho) = \{f \in M_k(\rho): \; \varepsilon \circ f = \pm f\}.$$ Note that any element of $M_{k,-}(\rho)$ is necessarily a cusp form because all isotropic cosets of $L'/L$ are invariant under $\varepsilon$. Let $S_{k, \pm}(\rho) = S_k(\rho) \cap M_{k, \pm}(\rho)$ denote the spaces of cusp forms. Using the Riemann--Roch formula one can compute $$\mathrm{dim}\, M_1(\rho) = \mathrm{dim}\, M_{1, +}(\rho) = 1;$$ $$\mathrm{dim}\, M_3(\rho) = \mathrm{dim}\, M_{3, +}(\rho) = 4;$$ $$\mathrm{dim}\, M_5(\rho) = 6, \; \mathrm{dim}\, S_{5, +}(\rho) = \mathrm{dim}\, S_{5, -}(\rho) = 1.$$ In fact, $M_1(\rho)$ is spanned by the vector-valued Eisenstein series $E_1(\tau)$ of weight one, which lifts to the Eisenstein series $\mathcal{E}_2$ of weight two. It will turn out that $(\mathcal{E}_2)^2$ is an additive theta lift of weight four. Choose any three theta lifts $m_{4,1}, m_{4,2}, m_{4,3}$ of weight four such that $\{\mathcal{E}_2^2, m_{4,1}, m_{4,2}, m_{4,3}\}$ is linearly independent, and let $m_6$ be the theta lift of the normalized form in $S_{5,+}(\rho)$.

\begin{theorem} Let $\Gamma = \langle \widetilde \Orth^+(L), \varepsilon \rangle$ be the group generated by the discriminant kernel $\widetilde \Orth^+(L)$ and the involution $\varepsilon$ that swaps the two $A_1$ components in $L$. Then $\Gamma$ is generated by reflections and $M_*(\Gamma)$ is a free algebra generated by forms of weights $2,4,4,4,6$: $$M_*(\Gamma) = \mathbb{C}[\mathcal{E}_2, m_{4,1}, m_{4,2}, m_{4,3}, m_6].$$
\end{theorem}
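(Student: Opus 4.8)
The plan is to run the Jacobian argument of Theorems~\ref{th:j} and~\ref{th:j2} with prospective Jacobian $J_0:=\psi\,\Phi_{18}$, exactly as in the previous sections. First I would check that the five candidate generators are modular forms of the stated weights for $\Gamma$ with trivial character. The form $\mathcal{E}_2$ is the additive lift of the vector-valued Eisenstein series $E_{1,0}(\tau)$, which is fixed by every automorphism of the discriminant form and in particular by $\varepsilon$, so $\mathcal{E}_2\in M_2(\Gamma)$. Each $m_{4,i}$ is the theta lift of a form in $M_3(\rho)=M_{3,+}(\rho)$ and $m_6$ is the theta lift of the normalized form in $S_{5,+}(\rho)$; since these input forms are $\varepsilon$-invariant, their lifts are invariant under $\varepsilon$ and under the discriminant kernel, hence lie in $M_*(\Gamma)$ with trivial character. (We choose $m_{4,1},m_{4,2},m_{4,3}$ so that $\{\mathcal{E}_2^2,m_{4,1},m_{4,2},m_{4,3}\}$ is a basis of $\mathrm{Maass}_4(\Gamma)$, which is four-dimensional by injectivity of the lift.)

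Next I would identify $J_0$ as the prospective Jacobian. The Jacobian $J=J(\mathcal{E}_2,m_{4,1},m_{4,2},m_{4,3},m_6)$ has weight $n+(2+4+4+4+6)=24$, which equals the weight of $J_0=\psi\,\Phi_{18}$. One checks that the divisor of $J_0$ is exactly the union of the mirrors of all reflections in $\Gamma$, each with multiplicity one: $\mathrm{div}(\Phi_{18})=H(1,0)$ is the union of the mirrors $\mathcal{D}_r(L)$ over primitive $r\in L$ of norm $2$, i.e.\ over all reflections of $\widetilde\Orth^+(L)$, while $\mathrm{div}(\psi)=H(1/2,\beta)$ with $\beta=(0,0,1/2,1/2,0,0)$ is the union of the $\mathcal{D}_r(L)$ over $r\in L+\beta$ of norm $1$, and these two families of mirrors are disjoint (a common component would force a norm-$2$ vector of $L$ to be proportional to a norm-$1$ vector of $L'$). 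The involution $\varepsilon$ is itself the reflection $\sigma_r$ in the norm-one vector $r=\tfrac12(e_3-e_4)\in L+\beta$, where $e_3,e_4$ are the standard generators of the two $A_1$ summands, so its mirror is one of the components of $H(1/2,\beta)$; and for any norm-one $r\in L+\beta$ the automorphism of $L'/L$ induced by $\sigma_r$ depends only on the coset $\beta$ and coincides with that of $\varepsilon$, so every such reflection already lies in $\Gamma=\langle\widetilde\Orth^+(L),\varepsilon\rangle$. Conversely any reflection of $\Gamma$ either lies in $\widetilde\Orth^+(L)$ or differs from $\varepsilon$ by an element of $\widetilde\Orth^+(L)$, hence has mirror in $H(1,0)\cup H(1/2,\beta)$.

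It then remains to verify that $J\not\equiv 0$, i.e.\ that the five generators are algebraically independent; this is the one step requiring an explicit computation, and I expect it to be the main obstacle. As for the other lattices of signature $(4,2)$ treated in this paper, I would compute the Fourier expansions of the five forms on the tube domain to sufficiently high precision and check that the $5\times 5$ determinant defining $J$ does not vanish identically (this is independent of the choice of the $m_{4,i}$); a cleaner alternative would be to pull the five forms back to the divisor $\mathcal{D}_r(L)$ of $\psi$, which is the modular variety of $U\oplus U(2)\oplus A_1(2)$ studied in Section~\ref{sec:8}, and use that $\psi$ vanishes there to first order together with the known structure of that algebra. Granting $J\neq 0$, Theorem~\ref{th:j} shows that $J$ is a cusp form for $\Gamma$ with the determinant character vanishing on every reflective mirror of $\Gamma$, so by the previous paragraph $J/J_0$ is a holomorphic modular form of weight zero on $\Gamma$, hence a nonzero constant by the Koecher principle, and $J=cJ_0$ vanishes exactly on the reflective mirrors of $\Gamma$ with multiplicity one. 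Theorem~\ref{th:j2} then yields both assertions: $\Gamma$ is generated by reflections, and $M_*(\Gamma)=\mathbb{C}[\mathcal{E}_2,m_{4,1},m_{4,2},m_{4,3},m_6]$. The only delicate point besides the algebraic independence is the bookkeeping in the second paragraph identifying which reflections of $\Orth^+(L)$ lie in $\Gamma$, which however is forced by the structure of the discriminant form as indicated.
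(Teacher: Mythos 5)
Your proposal is correct and follows essentially the same route as the paper: the generators are additive lifts of $\varepsilon$-invariant input forms, the Jacobian has weight $24$ matching $\psi\Phi_{18}$, its nonvanishing is checked from Fourier expansions, and Theorems \ref{th:j} and \ref{th:j2} then give both conclusions. Your second paragraph merely spells out in more detail the identification of the mirrors of reflections in $\Gamma$ with the divisor of $\psi\Phi_{18}$, which the paper asserts without elaboration.
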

\begin{proof} The divisor of $\psi \Phi_{18}$ consists exactly of mirrors of reflections contained in $\Gamma$. Since all of the generators are additive lifts of forms invariant under $\varepsilon$, they are modular forms for $\Gamma$ without character.
By Theorem \ref{th:j} the Jacobian $J = J(\mathcal{E}_2, m_{4,1}, m_{4,2}, m_{4,3}, m_6)$ is divisible by $\psi \Phi_{18}$; since $J$ and $\psi \Phi_{18}$ each have weight $24$, it follows that $J / (\psi \Phi_{18})$ is a constant. Computing $J$ with at least the first seven Fourier--Jacobi coefficients of these generators shows that the constant is nonzero. The structure of $M_*(\Gamma)$ and the fact that $\Gamma$ is generated by reflections associated to the divisor of $\psi \Phi_{18}$ then follow from Theorem \ref{th:j2}.
\end{proof}

\begin{remark} The Borcherds products of weight $4$ are additive lifts and transform under $\Gamma$ without character. However, they cannot be used as generators in place of the additive lifts $m_{4,i}$ because they are linearly dependent.
\end{remark}

\begin{remark} Any modular form that transforms with eigenvalue $-1$ under $\varepsilon$ has a forced zero on the divisor of $\psi$. Moreover, $\psi$ itself satisfies $\varepsilon(\psi) = -\psi$ because it has odd order on the mirror corresponding to $\varepsilon$. It follows that the graded ring $M_*(\widetilde \Orth^+(L))$ is generated by $\mathcal{E}_2, m_{4, 1}, m_{4, 2}, m_{4, 3}, m_6, \psi$ modulo a single relation in weight $12$ of the form $\psi^2 = P(\mathcal{E}_2, m_{4,1}, m_{4,2}, m_{4,3}, m_6)$. In particular, $M_*(\widetilde \Orth^+(L))$ is not a free algebra.
\end{remark}

We also obtain the ring of modular forms for $\Orth^+(L)$ with this argument.

\begin{theorem}\label{th:U+U(2)+2A1}
The group $\Orth^+(L)$ is generated by reflections. The graded ring $M_*(\Orth^+(L))$ is freely generated by the Eisenstein series of weights $2,4,6,8,12$: $$M_*(\Orth^+(L)) = \mathbb{C}[\mathcal{E}_2, \mathcal{E}_4, \mathcal{E}_6, \mathcal{E}_8, \mathcal{E}_{12}].$$
\end{theorem}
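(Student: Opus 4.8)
The plan is to run the Jacobian criterion (Theorems \ref{th:j} and \ref{th:j2}) with the reflection group $\Orth^+(L)$, exactly in the spirit of the previous theorem but with one more layer of reflections. The prospective Jacobian is the Borcherds product
$$J_0 := b_1 b_2 b_3 \psi \Phi_{18},$$
which has weight $4+4+4+6+18 = 36$; this matches the weight $n + \sum_j k_j = 4 + (2+4+6+8+12) = 36$ of the Jacobian $J := J(\mathcal{E}_2, \mathcal{E}_4, \mathcal{E}_6, \mathcal{E}_8, \mathcal{E}_{12})$ of the claimed generators. The vector in the principal part of $\Phi_{18}$ has norm $2$ and lies in $L$, so its reflection lies in $\widetilde{\Orth}^+(L)$; the vectors in the principal parts of $b_1, b_2, b_3, \psi$ have norm $1$ and order $2$ in $L'/L$, so their reflections lie in $\Orth^+(L)$ by the criterion for reflections recalled before Theorem \ref{th:j}. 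A short check shows these five products have pairwise coprime reduced divisors, so $J_0$ has exactly a simple zero along each mirror in its support.

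First I would observe that $\mathcal{E}_2, \mathcal{E}_4, \mathcal{E}_6, \mathcal{E}_8, \mathcal{E}_{12}$ are modular on $\Orth^+(L)$ with trivial character, which is immediate since they are orthogonal Eisenstein series. The essential step is to verify that they are algebraically independent, equivalently that $J \not\equiv 0$. Since the group $\langle \widetilde{\Orth}^+(L), \varepsilon\rangle$ of the previous theorem is contained in $\Orth^+(L)$, each $\mathcal{E}_k$ lies in $\mathbb{C}[\mathcal{E}_2, m_{4,1}, m_{4,2}, m_{4,3}, m_6]$; expressing $\mathcal{E}_4, \mathcal{E}_6, \mathcal{E}_8, \mathcal{E}_{12}$ as polynomials in these generators reduces the non-vanishing of $J$ to a finite check that these polynomials, together with $\mathcal{E}_2$, are algebraically independent, or one can simply compute $J$ from the first few Fourier--Jacobi coefficients of the $\mathcal{E}_k$. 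Granting $J \ne 0$, Theorem \ref{th:j} shows $J$ is a cusp form of weight $36$ which vanishes along $\mathcal{D}_r$ for every reflection $\sigma_r \in \Orth^+(L)$; in particular $J$ vanishes along the divisors of $b_1, b_2, b_3, \psi$ and $\Phi_{18}$, so $J_0 \mid J$, and comparing weights gives $J = c\, J_0$ with $c \in \mathbb{C}^\times$.

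Finally I would invoke Theorem \ref{th:j2}. The divisor of $J = c\,J_0$ is a union of simple zeros along mirrors of reflections in $\Orth^+(L)$, and by Theorem \ref{th:j}(3) it contains the mirror of \emph{every} reflection in $\Orth^+(L)$; since $\operatorname{div}(J)$ already has the full available weight $36$ with all zeros simple, it admits no further components, so $\operatorname{div}(J)$ is exactly the union of the mirrors of reflections in $\Orth^+(L)$, each with multiplicity one. Theorem \ref{th:j2} then yields that $\Orth^+(L)$ is generated by reflections and that $M_*(\Orth^+(L))$ is freely generated by $\mathcal{E}_2, \mathcal{E}_4, \mathcal{E}_6, \mathcal{E}_8, \mathcal{E}_{12}$. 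I expect the main obstacle to be precisely the non-vanishing of $J$: unlike the softer arguments above, this forces an honest (if mechanical) computation, and one must make sure that $\mathcal{E}_8$ and $\mathcal{E}_{12}$ genuinely contribute new directions and are not polynomials in $\mathcal{E}_2, \mathcal{E}_4, \mathcal{E}_6$, which would collapse $J$ to zero.
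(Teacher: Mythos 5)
Your proposal is correct and follows essentially the same route as the paper: identify $J_0 = b_1 b_2 b_3 \psi \Phi_{18}$ of weight $36$ as the prospective Jacobian, check that the Eisenstein series are modular without character, verify $J \ne 0$ by a Fourier--Jacobi computation (the paper uses precision at least $9$), conclude $J = c J_0$ by comparing weights, and apply Theorems \ref{th:j} and \ref{th:j2}. Your added remarks on why the reflections attached to $b_1,b_2,b_3,\psi,\Phi_{18}$ lie in $\Orth^+(L)$ and why $\operatorname{div}(J)$ can have no further components are correct elaborations of steps the paper leaves implicit.
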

\begin{proof} By construction the Eisenstein series are modular forms for the full group $\Orth^+(L)$. The Jacobian $J = J(\mathcal{E}_2, \mathcal{E}_4, \mathcal{E}_6, \mathcal{E}_8, \mathcal{E}_{12})$ vanishes on the mirrors of all reflections in $\Orth^+(L)$ and in particular is divisible by $J_0 := b_1b_2b_3\psi\Phi_{18}$. Since $J$ and $J_0$ both have weight $36$, it follows that $J / J_0$ is a constant. Computing the Fourier--Jacobi expansions of the Eisenstein series to precision at least $9$ shows that $J$ is not identically zero. It follows that $$M_*(\Orth^+(L)) =  \mathbb{C}[\mathcal{E}_2, \mathcal{E}_4, \mathcal{E}_6, \mathcal{E}_8, \mathcal{E}_{12}]$$ 
and $\Orth^+(L)$ is generated by reflections corresponding to $J$. 
\end{proof}

\begin{remark}
There are some other free algebras of modular forms associated to non-simple lattices. For the convenience of the reader, we list all known such algebras for maximal reflection groups contained in the full orthogonal groups in Table \ref{tab:non-simple}. 
\begin{table}[htbp]
\caption{Free algebras of modular forms on $\Orth_r(L)$ for non-simple lattices.}\label{tab:non-simple}
\renewcommand\arraystretch{1.1}
\noindent\[
\begin{array}{cccc}
\hline
\# & L  & \text{Weights of generators} & \text{Proof}\\
\hline
1& 2U \oplus 2A_1 & 4, 6, 8, 10, 12 & \text{\cite{WW20a}}\\
2& 2U \oplus 3A_1 & 4, 6, 6, 8, 10, 12 & \\
3& 2U \oplus 4A_1 & 4, 4, 6, 6, 8, 10, 12 & \\
\hline
4& 2U \oplus 2A_1(2) & 2, 4, 4, 6, 6 & \text{\cite{Wan20a}}\\
5& 2U \oplus A_2(2) &  4, 4, 6, 6, 6 & \\
6& 2U \oplus A_2(3) &  2, 2, 4, 4, 6 & \\
7& 2U \oplus A_3(2) & 2, 4, 4, 6, 6, 6 & \\
8& 2U \oplus D_4(2) & 4, 4, 4, 6, 6, 6, 6 & \\
\hline
9& U\oplus U(2) \oplus 2A_1 & 2, 4, 6, 8, 12 & \text{Theorem \ref{th:U+U(2)+2A1}} \\
\hline
\end{array} 
\]
\end{table}

It was proved in \cite{WW20b} that the graded algebra of modular forms for the group generated by $\widetilde \Orth_r(2U(2)\oplus 2A_1)$ and the swapping of two $A_1$ components is freely generated by five forms of weight 2. However, we do not obtain a new free algebra for $\Orth_r(2U(2)\oplus 2A_1)$ because by \cite[Lemma 6.1]{GN18} we have
$$
\Orth^+(2U(2)\oplus 2A_1)=\Orth^+(2U\oplus 2A_1).
$$
\end{remark}

\begin{remark}
Similarly to our previous work \cite{Wan20a, WW20b}, there are free algebras of modular forms for some reflection groups smaller than $\widetilde \Orth_r$ and for some reflection groups between $\widetilde \Orth_r$ and $\Orth_r$, which can be computed using the argument of this paper. We leave this task to the reader.

Conjecture 5.2 in \cite{Wan} states that if $M_*(\Gamma)$ is a free algebra for a finite index subgroup $\Gamma$ of $\Orth^+(L)$ then $M_*(\Gamma_1)$ is also free for any other reflection subgroup $\Gamma_1$ satisfying  $\Gamma< \Gamma_1 < \Orth^+(L)$. All examples in the current paper and in our previous work support this conjecture.
\end{remark}

\bigskip

\noindent
\textbf{Acknowledgements} 
H. Wang is grateful to Max Planck Institute for Mathematics in Bonn for its hospitality and financial support. B. Williams is supported by a fellowship of the LOEWE research group Uniformized Structures in Arithmetic and Geometry. The computations in this paper used the Sage computer algebra system \cite{sagemath}.

\bibliographystyle{plainnat}
\bibliofont
\bibliography{refs}

\begin{thebibliography}{35}
\providecommand{\natexlab}[1]{#1}
\providecommand{\url}[1]{\texttt{#1}}
\expandafter\ifx\csname urlstyle\endcsname\relax
  \providecommand{\doi}[1]{doi: #1}\else
  \providecommand{\doi}{doi: \begingroup \urlstyle{rm}\Url}\fi

\bibitem[Aoki and Ibukiyama(2005)]{AI}
Hiroki Aoki and Tomoyoshi Ibukiyama.
\newblock Simple graded rings of {S}iegel modular forms, differential operators
  and {B}orcherds products.
\newblock \emph{Internat. J. Math.}, 16\penalty0 (3):\penalty0 249--279, 2005.

\bibitem[Baily and Borel(1966)]{BB66}
Walter Baily and Armand Borel.
\newblock Compactification of arithmetic quotients of bounded symmetric
  domains.
\newblock \emph{Ann. of Math. (2)}, 84:\penalty0 442--528, 1966.

\bibitem[Borcherds(1998)]{Bo}
Richard Borcherds.
\newblock Automorphic forms with singularities on {G}rassmannians.
\newblock \emph{Invent. Math.}, 132\penalty0 (3):\penalty0 491--562, 1998.

\bibitem[Borcherds(1999)]{Bo2}
Richard Borcherds.
\newblock The {G}ross-{K}ohnen-{Z}agier theorem in higher dimensions.
\newblock \emph{Duke Math. J.}, 97\penalty0 (2):\penalty0 219--233, 1999.

\bibitem[Bruinier(2002)]{Br}
Jan~Hendrik Bruinier.
\newblock \emph{Borcherds products on {O}(2, {$l$}) and {C}hern classes of
  {H}eegner divisors}, volume 1780 of \emph{Lecture Notes in Mathematics}.
\newblock Springer-Verlag, Berlin, 2002.

\bibitem[Bruinier(2014)]{Br2}
Jan~Hendrik Bruinier.
\newblock On the converse theorem for {B}orcherds products.
\newblock \emph{J. Algebra}, 397:\penalty0 315--342, 2014.

\bibitem[Bruinier et~al.(2016)Bruinier, Ehlen, and Freitag]{BEF}
Jan~Hendrik Bruinier, Stephan Ehlen, and Eberhard Freitag.
\newblock Lattices with many {B}orcherds products.
\newblock \emph{Math. Comp.}, 85\penalty0 (300):\penalty0 1953--1981, 2016.

\bibitem[Dern(2002)]{D02}
Tobias Dern.
\newblock Paramodular forms of degree 2 and level 3.
\newblock \emph{Comment. Math. Univ. St. Paul.}, 51\penalty0 (2):\penalty0
  157--194, 2002.

\bibitem[Dern and Krieg(2003)]{DK03}
Tobias Dern and Aloys Krieg.
\newblock Graded rings of {H}ermitian modular forms of degree 2.
\newblock \emph{Manuscripta Math.}, 110\penalty0 (2):\penalty0 251--272, 2003.

\bibitem[Freitag(1967)]{F67}
Eberhard Freitag.
\newblock Modulformen zweiten {G}rades zum rationalen und {G}au\ss schen
  {Z}ahlk\"{o}rper.
\newblock \emph{S.-B. Heidelberger Akad. Wiss. Math.-Natur. Kl.}, pages 3--49,
  1967.

\bibitem[Freitag and Hermann(2000)]{FH00}
Eberhard Freitag and Carl~Friedrich Hermann.
\newblock Some modular varieties of low dimension.
\newblock \emph{Adv. Math.}, 152\penalty0 (2):\penalty0 203--287, 2000.

\bibitem[Freitag and Manni(2006)]{FSM06}
Eberhard Freitag and Riccardo~Salvati Manni.
\newblock Hermitian modular forms and the {B}urkhardt quartic.
\newblock \emph{Manuscripta Math.}, 119\penalty0 (1):\penalty0 57--59, 2006.

\bibitem[Freitag and Salvati~Manni(2007)]{FSM07}
Eberhard Freitag and Riccardo Salvati~Manni.
\newblock Some modular varieties of low dimension. {II}.
\newblock \emph{Adv. Math.}, 214\penalty0 (1):\penalty0 132--145, 2007.

\bibitem[Gritsenko and Nikulin(2018)]{GN18}
Valery Gritsenko and Viacheslav Nikulin.
\newblock Lorentzian {K}ac-{M}oody algebras with {W}eyl groups of
  2-reflections.
\newblock \emph{Proc. Lond. Math. Soc. (3)}, 116\penalty0 (3):\penalty0
  485--533, 2018.

\bibitem[Hashimoto and Ueda(2014)]{HU14}
Kenji Hashimoto and Kazushi Ueda.
\newblock The ring of modular forms for the even unimodular lattice of
  signature (2,10).
\newblock 2014.
\newblock URL \url{arXiv:1406.0332v2}.

\bibitem[Hayashida and Ibukiyama(2005)]{HI05}
Shuichi Hayashida and Tomoyoshi Ibukiyama.
\newblock Siegel modular forms of half integral weight and a lifting
  conjecture.
\newblock \emph{J. Math. Kyoto Univ.}, 45\penalty0 (3):\penalty0 489--530,
  2005.

\bibitem[Ibukiyama(1991)]{I91}
Tomoyoshi Ibukiyama.
\newblock On {S}iegel modular varieties of level {$3$}.
\newblock \emph{Internat. J. Math.}, 2\penalty0 (1):\penalty0 17--35, 1991.

\bibitem[Igusa(1962)]{I62}
Jun-ichi Igusa.
\newblock On {S}iegel modular forms of genus two.
\newblock \emph{Amer. J. Math.}, 84:\penalty0 175--200, 1962.

\bibitem[Kloecker(2005)]{Klo05}
Ingo Kloecker.
\newblock Modular forms for the orthogonal group {O}(2, 5).
\newblock 2005.
\newblock Dissertation (advisers A. Krieg, N.-P. Skoruppa), RWTH Aachen.

\bibitem[Krieg(2005)]{K05}
Aloys Krieg.
\newblock The graded ring of quaternionic modular forms of degree 2.
\newblock \emph{Math. Z.}, 251\penalty0 (4):\penalty0 929--942, 2005.

\bibitem[Ma(2019)]{Ma}
Shouhei Ma.
\newblock Quasi-pullback of {B}orcherds products.
\newblock \emph{Bull. Lond. Math. Soc.}, 51\penalty0 (6):\penalty0 1061--1078,
  2019.

\bibitem[Marks and Mason(2010)]{MM}
Christopher Marks and Geoffrey Mason.
\newblock Structure of the module of vector-valued modular forms.
\newblock \emph{J. Lond. Math. Soc. (2)}, 82\penalty0 (1):\penalty0 32--48,
  2010.

\bibitem[Popov and Vinberg(1989)]{VP89}
Vladimir Popov and Ernest Vinberg.
\newblock Invariant theory.
\newblock In \emph{Algebraic geometry, 4 ({R}ussian)}, Itogi Nauki i Tekhniki,
  pages 137--314, 315. Akad. Nauk SSSR, Vsesoyuz. Inst. Nauchn. i Tekhn.
  Inform., Moscow, 1989.

\bibitem[Runge(1993)]{R93}
Bernhard Runge.
\newblock On {S}iegel modular forms. {I}.
\newblock \emph{J. Reine Angew. Math.}, 436:\penalty0 57--85, 1993.

\bibitem[Scheithauer(2006)]{Sch06}
Nils Scheithauer.
\newblock On the classification of automorphic products and generalized
  {K}ac-{M}oody algebras.
\newblock \emph{Invent. Math.}, 164\penalty0 (3):\penalty0 641--678, 2006.

\bibitem[{The Sage Developers}(2020)]{sagemath}
{The Sage Developers}.
\newblock \emph{{S}ageMath, the {S}age {M}athematics {S}oftware {S}ystem
  ({V}ersion 9.1)}, 2020.
\newblock {\tt https://www.sagemath.org}.

\bibitem[Vinberg(2010)]{V10}
Ernest Vinberg.
\newblock Some free algebras of automorphic forms on symmetric domains of type
  {IV}.
\newblock \emph{Transform. Groups}, 15\penalty0 (3):\penalty0 701--741, 2010.

\bibitem[Vinberg(2018)]{V18}
Ernest Vinberg.
\newblock On some free algebras of automorphic forms.
\newblock \emph{Funktsional. Anal. i Prilozhen.}, 52\penalty0 (4):\penalty0
  38--61, 2018.

\bibitem[Vinberg and Shvartsman(2017)]{VS17}
Ernest Vinberg and Osip Shvartsman.
\newblock A criterion for smoothness at infinity for an arithmetic quotient of
  the future tube.
\newblock \emph{Funktsional. Anal. i Prilozhen.}, 51\penalty0 (1):\penalty0
  40--59, 2017.

\bibitem[Wang(2020{\natexlab{a}})]{Wan}
Haowu Wang.
\newblock The classification of free algebras of orthogonal modular forms.
\newblock 2020{\natexlab{a}}.
\newblock URL \url{arXiv:2006.02291}.

\bibitem[Wang(2020{\natexlab{b}})]{Wan20a}
Haowu Wang.
\newblock On some free algebras of orthogonal modular forms {II}.
\newblock 2020{\natexlab{b}}.
\newblock URL \url{arXiv:2006.02680}.

\bibitem[Wang and Williams(2020{\natexlab{a}})]{WW20a}
Haowu Wang and Brandon Williams.
\newblock On some free algebras of orthogonal modular forms.
\newblock \emph{Adv. Math.}, 373:\penalty0 107332, 22, 2020{\natexlab{a}}.

\bibitem[Wang and Williams(2020{\natexlab{b}})]{WW20b}
Haowu Wang and Brandon Williams.
\newblock Projective spaces as orthogonal modular varieties.
\newblock 2020{\natexlab{b}}.
\newblock URL \url{arXiv:2008.08392}.

\bibitem[Williams(2018)]{Wil}
Brandon Williams.
\newblock Poincar\'{e} square series for the {W}eil representation.
\newblock \emph{Ramanujan J.}, 47\penalty0 (3):\penalty0 605--650, 2018.

\bibitem[Williams(2020)]{Wil2}
Brandon Williams.
\newblock A construction of antisymmetric modular forms for {W}eil
  representations.
\newblock \emph{Math. Z}, 296:\penalty0 391--408, 2020.

\end{thebibliography}

\end{document}